\newtheorem{lemma}{Lemma}[section]
\newtheorem{corollary}[lemma]{Corollary}
\newtheorem{proposition}[lemma]{Proposition}
\newtheorem{theorem}[lemma]{Theorem}
\theoremstyle{definition}
\newtheorem{remark}[lemma]{Remark}
\newcommand{\e}{\varepsilon}
\newcommand{\f}{\varphi}
\newcommand{\ej}{\varepsilon_j}
\newcommand{\dej}{\delta_j}
\newcommand{\N}{\mathbb{N}}
\newcommand{\R}{\mathbb{R}}
\newcommand{\A}{\mathcal{A}}
\begin{document} 
\title{Multiscale homogenization of non-local energies \\ of convolution-type}
\author{ Giuseppe C. Brusca\footnote{gbrusca@sissa.it}  \\ 
SISSA,  Via Bonomea 265, Trieste, Italy}
\date{}			

\maketitle

\begin{abstract} We analyze a family of non-local integral functionals of convolution-type depending on two small positive parameters $\e,\delta$: the first rules the length-scale of the non-local interactions and produces a `localization' effect as it tends to $0$, the second is the scale of oscillation of a finely inhomogeneous periodic structure in the domain. We prove that a separation of the two scales occurs and that the interplay between the localization and homogenization effects in the asymptotic analysis is determined by the parameter $\lambda$ defined as the limit of the ratio $\e/\delta$. We compute the $\Gamma$-limit of the functionals with respect to the strong $L^p$-topology for each possible value of $\lambda$ and detect  three different regimes, the critical scale being obtained when $\lambda\in(0,+\infty)$.
\end{abstract} 

{\small {\bf MSC codes}: 49J45, 35B27, 47G20.}

{\small {\bf Keywords}: Homogenization, Non-local functionals, $\Gamma$-convergence, Separation of scales.}

\section{Introduction}

In their celebrated paper \cite{BBM}, Bourgain, Brezis, and Mironescu proved that a simple approximation of (a multiple of) the $p$-Dirichlet energy is obtained by means of the double integrals 
\begin{equation}\label{functionalsBBM}
    \int_\Omega\int_\Omega \rho_\varepsilon\Bigl(\frac{y-x}{\e}\Bigr)\Bigl|\frac{u(y)-u(x)}{\e}\Bigr|^p\, dx\,dy,
\end{equation}
with $\{\rho_\varepsilon\}_\varepsilon$ a family of radially symmetric non-negative kernels satisfying
\begin{equation*}
\frac{1}{\e^p}\int_{\R^d}\rho_\e(\xi)|\xi|^p\,d\xi=1 \quad \text{ and } \quad   \lim_{\e\to0}\,\frac{1}{\e^p}\int_{\R^d\setminus B_r} \rho_\e(\xi)|\xi|^p\,d\xi = 0
\end{equation*}
for all $\e,r>0$. This result has been extended by Ponce \cite{P1}, also in the sense of $\Gamma$-convergence, to more general homogeneous energies and non-radially symmetric kernels, and, very recently, sharp conditions on the family of kernels $\{\rho_\e\}_\e$ for the validity of such approximation have been detected, see \cite{DDP} and \cite{GS}.

A remarkable class of kernels that fit in this framework is obtained starting from a non-negative $\rho$ having $p$-moment on $\R^d$ equal to $1$ and letting
\begin{equation*}
    \rho_\e(\xi):=\frac{1}{\e^d}\rho\Bigl(\frac{\xi}{\e}\Bigr).
\end{equation*}
Because of this rescaling property, the resulting energies \eqref{functionalsBBM} are called of {\em convolution-type}.

Besides the theory of non-local gradients, that finds several applications in peridynamics (see, e.g., \cite{BM, BMP, MQ}), non-local energies, and in particular those of convolution-type, have been investigated in the last years within many contexts (we refer to \cite{AABPT} for a comprehensive treatment from the variational perspective). For instance, manifold-constrained maps have been considered by Solci \cite{S}, who has obtained an approximation of a vortex energy in the spirit of the Ginzburg-Landau model, and by Giorgio, Happ, and Sch\"onberger \cite{GHS} for the homogenization of micromagnetic energies. Some geometric aspects have been investigated by Berendsen and Pagliari in \cite{BP}, where, together with other results, the asymptotics as $\e\to0$ of the  associated notions of non-local perimeters are established. The analysis of multiscale problems has been addressed by Alicandro, Gelli, and Leone \cite{AGL} in the setting of perforated domains (see also \cite{BP1, BP2}), and by Braides, Scalabrino, and Trifone \cite{BST} in that of disconnected sets.  

The aim of this work is to perform the asymptotic analysis of non-local functionals of convolution-type in the setting of periodic homogenization. Given $\Omega$ a bounded open subset of $\R^d$ with Lipschitz boundary and small positive parameters $\varepsilon, \delta$, we study the functionals 
\begin{equation}\label{functionals0}
\int_\Omega\int_\Omega \frac{1}{\e^d}\rho\Bigl(\frac{y-x}{\e}\Bigr)f\Bigl(\frac{x}{\delta},\frac{y}{\delta},\frac{u(y)-u(x)}{\e}\Bigr)\, dx\,dy, \quad u\in L^p(\Omega; \R^m),
\end{equation}
for $p\in(1,+\infty)$, under general assumptions on $\rho$, the interaction kernel, and the density $f$, that is supposed to be $Q_1$-periodic in the first two variables. 

As already mentioned, the parameter $\e$ is responsible for a {\em localization} of the functionals \eqref{functionals0}, which possess a finite limit as $\e\to0$ only on Sobolev functions. On the other hand, since $f$ is periodic, the above energies encode some average properties of a finely inhomogeneous structure when $\delta$ is small, which, in broad terms, yields a {\em homogenized} limit energy independent of the spatial variables. In light of these observations, it is expected that both the localization and homogenization phenomena may be exhibited by our model when the parameters $\e, \delta$ vanish simultaneously; and therefore, it is rather natural to ask how these effects combine.

For this reason, we assume that $\delta=\delta(\e)$ vanishes as $\e\to0$, and prove that a {\em separation of the scales} $\e$ and $\delta$ occurs, providing a complete description of the effective limit (in the sense of $\Gamma$-convergence with respect to the strong $L^p$-topology) in accordance with the (possibly different) rates of convergence to $0$ of the involved parameters. 

\smallskip

In order to illustrate our result, we consider a simplified, but prototypical, example of non-local oscillating energies given by
\begin{equation}\label{functionalproto}
  F_{\e,\delta}(u):=\int_\Omega\int_\Omega \frac{1}{\e^d}\rho\Bigl(\frac{y-x}{\e}\Bigr)a\Bigl(\frac{x}{\delta}\Bigr)\Bigl|\frac{u(y)-u(x)}{\e}\Bigr|^p\, dx\,dy,
\end{equation}
that is obtained from \eqref{functionals0} upon setting $f(x,y,z)=a(x)|z|^p$, with the function $a$ that is $Q_1$-periodic and such that $0<\alpha\leq a(x)\leq \beta<+\infty$ for a.e. $x\in \R^d$. 

As a starting point for our study, we may consider the case $\delta(\e)=\e$ that has already been treated in \cite{AABPT}. In this instance, it is proved that
\begin{equation*}
  \Gamma(L^p)\text{-}\lim_{\e\to0} F_{\e,\e}(u)  = \int_\Omega f_{\rm hom}^{\rm NL}(\nabla u)\, dx, \quad u\in W^{1,p}(\Omega;\R^m),
\end{equation*}
 the integrand of the homogenized energy $f_{\rm hom}^{\rm NL}$ being characterized through a so-called {\em non-local cell-problem formula} (see \cite[Theorem 6.2]{AABPT}) given by 
\begin{equation*}
      f_{\rm hom}^{\rm NL}(M)= \inf\Bigl\{ \int_{\R^d}\int_{Q_1} \rho(y-x) a(x)|u(y)-u(x)|^p\, dx\, dy : u\in L^p_{\#,M}(Q_1;\R^m) \Bigr\}
\end{equation*}
for all $M\in \R^{m\times d}$, where
\begin{equation*}
    L^p_{\#,M}(Q_1; \R^m):=\{u\in L^{p}_{\rm loc}(\R^d;\R^m) : u-Mx \text{ is } Q_1\text{-periodic}\}.
\end{equation*}
It is immediate to extend this result to the case that  $\delta$ is a multiple of $\e$
\begin{equation*}
    \lambda\delta(\e)=\e, \quad \e>0,
\end{equation*}
for some $\lambda\in(0,+\infty)$, obtaining that 
\begin{equation}\label{critical}
  \Gamma(L^p)\text{-}\lim_{\e\to0} F_{\e,\frac{\e}{\lambda}}(u)  = \int_\Omega f_{\rm hom, \lambda}^{\rm NL}(\nabla u)\, dx,
\end{equation}
where now
\begin{equation*}
      f_{\rm hom, \lambda}^{\rm NL}(M)= \inf\Bigl\{ \int_{\R^d}\int_{Q_1} \frac{1}{\lambda^d}\rho\Bigl(\frac{y-x}{\lambda}\Bigr) a(x)\Bigl|\frac{u(y)-u(x)}{\lambda}\Bigr|^p\, dx\, dy : u\in L^p_{\#,M}(Q_1;\R^m) \Bigr\}.
\end{equation*}

Different limits are obtained if we let the parameters $\e,\delta$ tend to $0$ separately. To see this, we suppose for simplicity of exposition that the kernel $\rho$ is radial and that the coefficient $a$ is continuous. If we let first $\e\to0$ (keeping $\delta$ fixed), applying \cite[Corollary 8]{P1} we obtain that 
\begin{equation*}
    F_\delta(u):=\Gamma(L^p)\text{-}\lim_{\e\to0} F_{\e,\delta}(u)= \kappa \int_\Omega a\Bigl(\frac{x}{\delta}\Bigr)|\nabla u(x)|^p\, dx, \quad u\in W^{1,p}(\Omega; \R^m),
\end{equation*}
 where
\begin{equation}\label{kappa}
\kappa:=\int_{\R^d}\rho(\xi)|\xi_1|^p\,d\xi.
\end{equation}
Then, letting $\delta\to0$ and using a known result in $\Gamma$-convergence for the homogenization of integral functionals (see \cite[Theorem 14.7]{BDF}) we infer
\begin{equation*}
  \Gamma(L^p)\text{-}\lim_{\delta\to0}  F_\delta(u)= \kappa \int_\Omega f_{\rm hom}(\nabla u)\, dx,
\end{equation*}
where the integrand is described by the {\em cell-problem formula}
\begin{equation*}
    f_{\rm hom}(M):=\inf\Bigl\{\int_{Q_1}a(x)|\nabla u(x)|^p\,dx: u \in W^{1,p}_{\#,M}(Q_1; \R^m) \Bigr\}
\end{equation*}
and
\begin{equation*}
    W^{1,p}_{\#,M}(Q_1; \R^m):=\{u\in W^{1,p}_{\rm loc}(\R^d;\R^m) : u-Mx \text{ is } Q_1\text{-periodic}\}.
\end{equation*}

In a similar fashion, if we first let $\delta\to0$, by the periodicity of the coefficient $a$  and the Riemann-Lebesgue Lemma, we have that
\begin{equation*}
    F_\e(u):=\Gamma(L^p)\text{-}\lim_{\delta\to0} F_{\e,\delta}(u)=  \Bigl(\int_{Q_1}a\,dx\Bigr)\int_\Omega\int_\Omega \frac{1}{\e^d}\rho\Bigl(\frac{y-x}{\e}\Bigr)\Bigl|\frac{u(y)-u(x)}{\e}\Bigr|^p\, dx\,dy;
\end{equation*}
and then, applying once again \cite[Corollary 8]{P1}, as $\e\to0$ we obtain
\begin{equation*}
  \Gamma(L^p)\text{-}\lim_{\e\to0}  F_\e(u)= \kappa \Bigl(\int_{Q_1}a\,dx\Bigr)\int_\Omega |\nabla u|^p\, dx, \quad u\in W^{1,p}(\Omega; \R^m),
\end{equation*}
with $\kappa$ as in \eqref{kappa}.

Since functionals \eqref{functionalproto} are equi-coercive in the strong $L^p$-topology on functions with fixed mean value on $Q_1$ (see \cite[Theorems 1.1, 1.2]{P2}), using, at least formally, a diagonal argument (see \cite{DM} for the metrizability of $\Gamma$-convergence) we obtain that there exist two scales $\delta'(\e)$ and $\delta''(\e)$ such that 
\begin{equation*}
    \Gamma(L^p)\text{-}\lim_{\e\to0} F_{\e,\delta'(\e)}(u)=  \kappa \int_\Omega f_{\rm hom}(\nabla u)\, dx
\end{equation*}
and
\begin{equation*}
    \Gamma(L^p)\text{-}\lim_{\e\to0} F_{\e,\delta''(\e)}(u)=  \kappa \Bigl(\int_{Q_1}a\,dx\Bigr)\int_\Omega |\nabla u|^p\, dx
\end{equation*}
whenever $u\in W^{1,p}(\Omega; \R^m)$. These two behaviours differ from that obtained in \eqref{critical}, where $\e/\delta(\e)$ is a fixed positive constant, suggesting that the $\Gamma$-limit of $\{F_{\e,\delta(\e)}\}_\e$ may be governed by the new parameter
\begin{equation*}
\lambda:=\lim_{\e\to0}\frac{\e}{\delta(\e)}\in[0,+\infty],
\end{equation*}
and in particular that the critical scaling may be obtained for $\lambda\in(0,+\infty)$. This is the content of our main result. In view of its statement, we set our standing assumptions. 

\smallskip

We suppose that $\rho:\R^d\to [0,+\infty]$ is a non-negative Borel function (not necessarily radially symmetric) such that there exist $c_0,r_0>0$ with the property that 
\begin{equation}\label{rho : ball}
\tag{$\rho_1$}
    \rho(\xi)\geq c_0 \quad \text{ for almost every } \xi\in B_{r_0},
\end{equation}
and 
\begin{equation}\label{rho : moment}
\tag{$\rho_2$}
    \int_{\R^d}\rho(\xi)|\xi|^p\, d\xi <+\infty,
\end{equation}
then we suppose that $f: \R^d\times \R^d\times \R^m\to[0,+\infty)$ is a Borel function such that 
\begin{align}  \notag
& f(\cdot,y,z) \text{ and } f(x,\cdot,z)  \text{ are } Q_1\text{-periodic for every } z\in \R^m \\ \label{f : period} \tag{P}
& \qquad\text{ and for almost every } y, x\in\R^d, \text{ respectively},
\end{align}
\begin{equation} \label{f : convex}
    \tag{C}
f(x,y,\cdot) \text{ is convex for almost every } x,y\in \R^d,
\end{equation}
and that fulfills the growth conditions 
\begin{equation} \label{f : growth}
    \tag{GC}
\alpha|z|^p\leq f(x,y,z)\leq \beta|z|^p \text{ for almost every } x,y\in \R^d \text{ and for every } z\in \R^m
\end{equation}
for some strictly positive numbers $\alpha,\beta$ and an exponent $p\in(1,+\infty)$. 

Besides these hypotheses, according to the value of $\lambda$ we shall assume that
\begin{equation} \label{f : continuous}
    \tag{H0}
f(x,\cdot,z) \text{ is continuous for almost every } x\in \R^d \text{ and for every } z\in \R^m,
\end{equation}
or 
\begin{equation}\label{H1}
       \tag{H1} \rho(\xi)f(x,y,z)=\rho(-\xi)f(x,y,-z)  \text{ for almost every } \xi,x,y\in\R^d \text{ and for every } z\in \R^m, 
        \end{equation}
    or that 
\begin{equation}\label{H2}
    \tag{H2}
    \int_{\R^d}\rho(\xi)\,d\xi =+\infty.
\end{equation}
Note that if $f(x,y,z)=f(y,x,z)$ and $f(x,y,z)=f(x,y,-z)$ for a.e. $x,y\in\R^d$ and for all $z\in\R^m$, it is not restrictive to suppose that $\rho(\xi)=\rho(-\xi)$ for a.e. $\xi\in\R^d$, see, e.g.,  \cite[Remark 2.1]{AABPT}.

\smallskip

We use the change of variables $\xi:=(y-x)/\e$ and introduce the notation $E_{\e}(\xi):=\{x\in E : x+\e\xi\in E\}$ to rewrite the functional \eqref{functionals0} as
    \begin{equation*}
\int_{\R^d}\rho(\xi)\int_{\Omega_{\e}(\xi)} f\Bigl(\frac{x}{\delta},\frac{x+\e\xi}{\delta},\frac{u(x+\e\xi)-u(x)}{\e}\Bigr)\, dx\,d\xi;
\end{equation*}
then, we consider $\{\e_j\}_j$ and $\{\dej\}_j$ two sequences of positive numbers converging to $0$ as $j\to+\infty$ and define the functionals $F_j: L^p(\Omega; \R^m)\to[0,+\infty]$ as
\begin{equation}\label{functionals}
    F_j(u)  := \int_{\R^d} \rho(\xi)\int_{\Omega_{\ej}(\xi)} f\Bigl(\frac{x}{\dej},\frac{x+\ej\xi}{\dej},\frac{u(x+\ej\xi)-u(x)}{\ej}\Bigr)\, dx\,d\xi, \quad j\in\N.
\end{equation}

\begin{theorem}\label{thm : main}
    Let $\{\ej\}_j$ and $\{\dej\}_j$ be sequences such that $\ej\to 0^+$ and $\dej\to0^+$ as $j\to+\infty$, and assume there exists
    \begin{equation*}
        \lambda:=\lim_{j\to+\infty}\frac{\ej}{\dej}\in[0,+\infty].
    \end{equation*} Let $\rho$ be a non-negative kernel satisfying \eqref{rho : ball} and \eqref{rho : moment}, and let $f$ be a function satisfying \eqref{f : period}, \eqref{f : convex}, and \eqref{f : growth}. In addition, if $\lambda\in[0,+\infty)$ assume that \eqref{f : continuous} is satisfied, and if $\lambda=+\infty$ assume that either \eqref{H1} or \eqref{H2} is satisfied. Then, letting $\{F_j\}_j$ be as in \eqref{functionals}, there exists a quasiconvex function $f_\lambda:\R^{m\times d}\to[0,+\infty)$ such that
    \begin{equation*}
        \Gamma(L^p)\text{-}\lim_{j\to+\infty} F_j(u) = \begin{cases}
            \displaystyle \int_{\Omega } f_\lambda(\nabla u)\,dx & \text{ if } u\in W^{1,p}(\Omega;\R^m), \\
            +\infty & \text{ if } u\in L^p(\Omega;\R^m)\setminus W^{1,p}(\Omega;\R^m).
        \end{cases}
    \end{equation*}
    In particular, the following hold for every $M\in\R^{m\times d}$:
    \begin{itemize}
        \item[(i)] if $\lambda=0$ and \eqref{f : continuous} holds, then
        \begin{equation*}
            f_0(M)= \inf\Bigl\{ \int_{\R^d} \int_{Q_1}\rho(\xi) f(x,x,(\nabla u(x) ) \xi)\, dx\,d\xi : u\in W^{1,p}_{\#,M}(Q_1; \R^m) \Bigr\};
        \end{equation*}
        \item[(ii)] if $\lambda\in(0,+\infty)$ and \eqref{f : continuous} holds, then 
         \begin{equation*}
            f_\lambda(M)= \inf\Bigl\{ \int_{\R^d}\int_{Q_1} \frac{1}{\lambda^d}\rho\Bigl(\frac{y-x}{\lambda}\Bigr) f\Bigl(x,y,\frac{u(y)-u(x)}{\lambda}\Bigr)\, dx\, dy : u\in L^p_{\#,M}(Q_1;\R^m) \Bigr\};
        \end{equation*}
        \item[(iii)] if $\lambda=+\infty$ and either \eqref{H1} or \eqref{H2} holds, then
        \begin{equation*}
            f_{+\infty}(M)=\int_{\R^d} \int_{Q_1}\int_{Q_1}\rho(\xi) f(x,y,M\xi)\, dx\, dy\,d\xi.
        \end{equation*}
    \end{itemize}
\end{theorem}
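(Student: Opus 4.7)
My overall strategy is the standard three-step $\Gamma$-convergence approach: (a) establish $L^p$-equi-coercivity so that the limit is concentrated on $W^{1,p}(\Omega;\R^m)$; (b) extract any $\Gamma$-cluster point via $\Gamma$-compactness and apply an integral-representation theorem to show it has the form $\int_\Omega g(\nabla u)\,dx$ for some quasiconvex $g$ independent of $x$; (c) identify $g=f_\lambda$ by testing the resulting $\Gamma$-inequalities against affine targets $u_M(x)=Mx$. For (a), the lower bound in \eqref{f : growth} dominates $F_j$ by $\alpha$ times a BBM-type functional, so Ponce's compactness result \cite[Theorems 1.1, 1.2]{P2} yields relative compactness in $L^p$ and Sobolev regularity of the limit. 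For (b), I would localize $F_j(u;A)$ on open subsets $A\subset\Omega$, verify inner regularity, subadditivity, and approximate translation invariance using \eqref{rho : moment} to control boundary terms, then invoke an abstract integral-representation theorem; the periodicity \eqref{f : period} together with a blow-up at Lebesgue points removes the $x$-dependence of the density, and quasiconvexity follows automatically from $L^p$-lower semicontinuity.

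Once reduced to the computation of $f_\lambda(M)$ for $M\in\R^{m\times d}$, I would treat the three regimes separately. Case (ii), $\ej/\dej\to\lambda\in(0,+\infty)$, is essentially the setting of \cite[Theorem 6.2]{AABPT} after a one-parameter rescaling by $\dej$: the lower bound comes from partitioning $\Omega$ into cubes of side $\dej$ and a subadditivity argument, while the recovery sequence is constructed by placing in each such cube the rescaling of a near-minimizer of the non-local cell problem on $Q_1$. For case (i), $\ej/\dej\to 0$, the interaction length is much finer than the oscillation scale, so I exploit \eqref{f : continuous} to replace $f(x/\dej,(x+\ej\xi)/\dej,\cdot)$ by $f(x/\dej,x/\dej,\cdot)$ up to a modulus of continuity vanishing on bounded sets of $\xi$, with tails controlled by \eqref{rho : moment}. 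This effectively freezes the $y$-variable and reduces $F_j$ to a BBM-type functional with $\dej$-oscillating coefficient; applying \cite[Corollary 8]{P1} in $\ej$ first and then a classical periodic-homogenization step in $\dej$ produces the cell-problem formula for $f_0$. For case (iii), $\ej/\dej\to+\infty$, the oscillation is much finer than the interaction, and one expects a double averaging that replaces $f(x/\dej,(x+\ej\xi)/\dej,z)$ by $\int_{Q_1}\int_{Q_1}f(x,y,z)\,dx\,dy$; combined with Jensen's inequality for the lower bound, enabled by \eqref{f : convex}, and the affine recovery $u(x)=Mx$ for the upper bound, this yields the stated formula for $f_{+\infty}$.

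I expect the main obstacle to lie in the $\lambda=+\infty$ regime when $f$ lacks continuity in $x$, which is why the hypotheses \eqref{H1} or \eqref{H2} are imposed. Under \eqref{H2}, the non-integrable mass of $\rho$ near the origin is used to reduce the dominant contribution to interaction vectors $\xi$ for which $\ej|\xi|$ and $\dej$ become comparable, along which the two arguments $x/\dej$ and $(x+\ej\xi)/\dej$ are asymptotically uncorrelated modulo $Q_1$, so that a two-scale Riemann--Lebesgue argument produces the double average. Under \eqref{H1}, a symmetrization that exploits the parity identity $\rho(\xi)f(x,y,z)=\rho(-\xi)f(x,y,-z)$ cancels the obstruction and yields the same limit. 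A secondary difficulty is the construction of recovery sequences in case (i), where the homogenization corrector lives only in $W^{1,p}_{\mathrm{loc}}(\R^d;\R^m)$ and must be combined with a mollification at a scale intermediate between $\ej$ and $\dej$ in order to be admissible for the non-local energy; this requires a diagonal argument in the mollification parameter, controlled by the upper bound in \eqref{f : growth}.
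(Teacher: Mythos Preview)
Your overall architecture---equi-coercivity, compactness and integral representation, then identification of the density on affine targets---matches the paper's. The paper also truncates $\rho$ to a ball first (Lemma~\ref{lemma : truncated functionals}) and then, crucially, rescales by $\dej$ to reduce the identification of the density at $M$ to the asymptotics of the \emph{single} family of cell problems
\[
\inf\Bigl\{ \int_{B_T} \rho(\xi)\int_{Q_1} f\Bigl(x,x+\lambda_j\xi,\frac{u(x+\lambda_j\xi)-u(x)}{\lambda_j}\Bigr)\, dx\,d\xi : u\in L^p_{\#,M}(Q_1;\R^m) \Bigr\},
\]
with $\lambda_j=\ej/\dej$ (Proposition~\ref{prop : cell formula}). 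All three regimes are then treated by analyzing these cell problems as $\lambda_j\to\lambda$, not by iterated limits in $\ej$ and $\dej$.

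This is where your proposal has genuine gaps. In case~(i) you write ``applying \cite[Corollary 8]{P1} in $\ej$ first and then a classical periodic-homogenization step in $\dej$''. That is an iterated-limit heuristic, not a proof: the two parameters vanish simultaneously, and freezing $y$ via \eqref{f : continuous} does not by itself justify commuting the limits. The paper instead shows that near-minimizers $u_j$ of the cell problem are strongly precompact in $L^p$ when $\lambda_j\to0$ (Lemma~\ref{lemma : compactness}(i)), so that the difference quotients converge weakly to $(\nabla u)\xi$, and then uses the strong--weak lower semicontinuity of Theorem~\ref{thm : LSC} to pass to the limit. Your mollification-at-an-intermediate-scale construction for the recovery sequence is unnecessary once the problem is rescaled to $Q_1$.

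The more serious gap is case~(iii). ``Jensen's inequality for the lower bound'' does not work: Jensen in the $z$-variable would produce $f$ evaluated at an averaged difference quotient, whereas the target is an average of $f(x,y,M\xi)$ over $(x,y)$ with $z=M\xi$ fixed. The obstruction is precisely that the difference quotients $(u_j(x+\lambda_j\xi)-u_j(x))/\lambda_j$ need not converge to $M\xi$; the paper writes them as $M\xi+V_j(x,\lambda_j\xi)$ with $V_j$ doubly $Q_1$-periodic and bounded in $L^p(Q_1\times Q_1)$, then proves a two-scale-type lemma (Lemma~\ref{lemma : supercritical 3}) showing that the oscillating energy has the same weak limit as its $y$-averaged version. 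This yields only
\[
\f(M)\ge \inf_{V\in\mathcal C}\int_{B_T}\rho(\xi)\int_{Q_1}\int_{Q_1} f(x,x+y,M\xi+V(x,y))\,dx\,dy\,d\xi,
\]
and it is exactly here that \eqref{H1} or \eqref{H2} enter: under \eqref{H1} the functional is even in $V$, so convexity forces the infimum at $V=0$; under \eqref{H2} the growth bound makes the functional $+\infty$ unless $V=0$. Your description of \eqref{H2} as selecting ``interaction vectors $\xi$ for which $\ej|\xi|$ and $\dej$ become comparable'' is not the mechanism---indeed the paper exhibits (Proposition~\ref{prop : superobstruction}) that without \eqref{H1}/\eqref{H2} the difference quotients of optimal $u_j$ may fail to converge to $M\xi$, so no Riemann--Lebesgue decorrelation argument alone can close the lower bound.
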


We remark that the hypotheses \eqref{rho : ball}, \eqref{rho : moment} and \eqref{f : growth} are needed in order to ensure the equi-coerciveness of the functionals $\{F_j\}_j$ so that, as $\e\to0$, these have a finite limit only on $W^{1,p}(\Omega;\R^m)$, see \cite{P2} or \cite{AABPT}, and that the latter can be weakened if we further assume the integrability of the kernel (Remark \ref{rmk}). Assumption \eqref{f : convex} implies the weak lower semicontinuity in $L^p$ of the involved functionals and cannot be removed unconcernedly, since, contrary to the local case, the relaxation procedure for double integrals may even result in the loss of an integral representation as observed in \cite{MT, KZ, B}. We postpone further comments on the conditions \eqref{f : continuous}, \eqref{H1} and \eqref{H2} to the end of the introduction.

\smallskip

The first steps towards the proof of our result are the reduction to the case $\rho$ is supported on a ball followed by the application of a compactness and integral representation result. These results have been proved in \cite{AABPT} in greater generality; we present some simpler versions that are suitable for our purposes, see Lemma \ref{lemma : truncated functionals} and Theorem \ref{thm : intrep}, respectively. The latter states that the sequence of functionals $\{F_j\}_j$ in \eqref{functionals} admits (up to subsequences) a $\Gamma$-limit which has to be finite on $W^{1,p}(\Omega;\R^m)$ only and that is represented in integral form as
\begin{equation*}
  \Gamma(L^p)\text{-}\lim_{j\to+\infty} F_{j}(u)  = \int_\Omega \f(x,\nabla u)\, dx.
\end{equation*}
 As customary in the homogenization of integral functionals, we prove that the integrand $\f$ is independent of the first variable (Lemma \ref{lemma : homogeneity} below). Subsequently, we exploit the quasiconvexity of $\f$ in the gradient variable and the convergence of boundary-value problems for non-local functionals to infer that
\begin{equation*}
      \f(M) = \lim_{j\to+\infty} \inf\{ F_j(u, Q) : u=Mx \text{ close to } \partial Q\},
    \end{equation*}
where we have `localized' the functionals $\{F_j\}_j$ in a cube $Q$ contained in $\Omega$ setting
\begin{equation*}
    F_j(u,Q)  := \int_{\R^d} \rho(\xi)\int_{Q_{\ej}(\xi)} f\Bigl(\frac{x}{\dej},\frac{x+\ej\xi}{\dej},\frac{u(x+\ej\xi)-u(x)}{\ej}\Bigr)\, dx\,d\xi, \quad j\in\N,
\end{equation*}
and the proper meaning of `close to $\partial Q$' shall be clarified in Section $2$. Note that, here, the term `localization' is intended in the sense of the so-called `localization method' for $\Gamma$-convergence in which integral functionals are regarded as set functions as well (see \cite[Chapter 9]{BDF} or \cite[Chapter 5]{AABPT} for the non-local counterpart). Upon manipulating the localized energies using the assumptions \eqref{f : period} and \eqref{f : convex}, together with some rescaling arguments, we obtain that $\f(M)$ is determined in terms of the `non-local cell-problem formulas'
\begin{equation}\label{min_j}
    \inf\Bigl\{ \int_{\R^d} \rho(\xi)\int_{Q_1} f\Bigl(x,x+\frac{\ej}{\dej}\xi,\frac{u(x+\frac{\ej}{\dej}\xi)-u(x)}{\frac{\ej}{\dej}}\Bigr)\, dx\,d\xi : u\in L^p_{\#,M}(Q_1;\R^m) \Bigr\};
\end{equation}
and then, we discuss the asymptotic behaviour of the infima \eqref{min_j} according to the value of the relevant parameter $\lambda$. Our analysis is based on the study of the compactness properties of any sequence $\{u_j\}_j$ having equi-bounded energies; i.e., such that
\begin{equation}\label{intro uniform}
    \sup_j \int_{\R^d} \rho(\xi)\int_{Q_1} f\Bigl(x,x+\frac{\ej}{\dej}\xi,\frac{u_j(x+\frac{\ej}{\dej}\xi)-u_j(x)}{\frac{\ej}{\dej}}\Bigr)\, dx\,d\xi <+\infty,
\end{equation}
and with fixed mean value in $Q_1$ for all $j\in\N$. In particular, if 
    \begin{equation*}
        \lambda=\lim_{j\to+\infty}\frac{\ej}{\dej}= 0, \qquad \qquad (\textit{subcritical case})
    \end{equation*}
    condition \eqref{intro uniform}, together with the assumptions on $\rho$ and $f$, implies the strong convergence (up to subsequences) to a function $u\in W^{1,p}(\Omega;\R^m)$.
    
    If 
    \begin{equation*}
        \lambda=\lim_{j\to+\infty}\frac{\ej}{\dej}\in (0, +\infty), \qquad \qquad (\textit{critical case})
    \end{equation*}
    we infer a uniform bound on $\|u_j\|_{L^p(Q_1;\R^m)}$ that only yields weak compactness in $L^p$ but which is sufficient to carry out our proof.
    
     Finally, when
     \begin{equation*}
        \lambda=\lim_{j\to+\infty}\frac{\ej}{\dej}=+\infty, \qquad \qquad (\textit{supercritical case})
    \end{equation*}
    we are not able to obtain any compactness for $\{u_j\}_j$; we rely on the $L^p$-bounds for the difference quotients
    \begin{equation}\label{diffquot}
        \frac{u_j(x+\frac{\ej}{\dej}\xi)-u_j(x)}{\frac{\ej}{\dej}}, \quad j\in\N,
    \end{equation}
    and on a more delicate argument in the vein of the two-scale convergence \cite{A,N}.

\smallskip

It is interesting to note that, starting from our result, it is possible to infer that the function
\begin{equation*}
   \lambda \mapsto f_\lambda(M)
\end{equation*}
is continuous in $[0,+\infty]$ for all $M\in \R^{m\times d}$, upon assuming \eqref{f : continuous} together with either \eqref{H1} or \eqref{H2}. The continuity at $\lambda=0$, which is formally obtained adapting the equi-coerciveness and $\Gamma$-convergence results in \cite{P1, P2}, can be regarded as a localization of the non-local cell-problem formulas defining $f_\lambda$ to the local cell-problem formula for $f_0$. We rigorously infer such continuity using \eqref{f : continuous}, but this assumption seems to play a merely technical role in the whole work as it is needed to apply a strong-weak lower semicontinuity result in $L^p$-spaces (Theorem \ref{thm : LSC}). Clearly, requiring the continuity in the $y$-variable or in the $x$-variable is equivalent, and is not even needed when only one space-variable is involved; in other words, \eqref{f : continuous} can be removed if $f(x,y,z)=f(x,z)$. 

The case $\lambda=+\infty$ is more complex and a different discussion is needed about the hypotheses \eqref{H1} and \eqref{H2}. According to the heuristics illustrated before for the analysis of \eqref{functionalproto}, it would be natural to conjecture that
\begin{equation*}
f_{+\infty}(M)= \int_{\R^d} \int_{Q_1}\int_{Q_1}\rho(\xi) f(x,y,M\xi)\, dx\, dy\,d\xi
\end{equation*}
for all $M\in \R^{m\times d}$, regardless of the validity of \eqref{H1} or \eqref{H2}. In fact, if we do not enhance our assumptions, we are only able to prove that 
\begin{equation}\label{introlower}
           f_{+\infty}(M)\geq\inf\Bigl\{ \int_{\R^d} \int_{Q_1}\int_{Q_1} \rho(\xi)f(x,x+y,M\xi+V(x,y))\,dx\, dy\,d\xi : V\in\mathcal{C}\Bigr\},
        \end{equation}
where the class $\mathcal{C}$ contains functions periodic in both variables and whose mean value equals $0$. This class is introduced because, roughly speaking, functions of the form $M\xi+V(x,y)$ with $V\in\mathcal{C}$ arise as a two-scale limit of a sequence of difference quotients similar to \eqref{diffquot}. If we assume that either \eqref{H1} or \eqref{H2} holds true, then the minimum problem in \eqref{introlower} is easily seen to be solved only by $V=0$, which allows us to prove $(iii)$ in Theorem \ref{thm : main}. On the contrary, when neither of the hypotheses \eqref{H1} and \eqref{H2} is in force, one can exhibit simple examples in which $V=0$ is not a minimizer for \eqref{introlower}.

This argument suggests that it may be needed a more careful analysis of the difference quotients in \eqref{diffquot}  when $\{u_j\}_j$ is an optimal sequence for the corresponding minimum problems \eqref{min_j}; but it turns out that, for a simple choice of $f$, these difference quotients do not weakly converge to $M\xi$ if both \eqref{H1} and \eqref{H2} are neglected, see Proposition \ref{prop : superobstruction}. We are then led to regard hypotheses \eqref{H1} and \eqref{H2} as necessary to us, when the homogenization occurs at a scale that is infinitesimal compared to that of the localization, in order to ensure a `compatibility' between the two limit processes.

\smallskip

Such a compatibility seems to be lacking, or to be more difficult to exploit, in the fractional setting previously analyzed in collaboration with Braides and Donati \cite{BBD}. For the oscillating energies of fractional-type 
\begin{equation}\label{fractional}
     (1-s)\int_\Omega\int_\Omega a\Bigl(\frac{x}{\delta}\Bigr)\frac{|u(y)-u(x)|^2}{|y-x|^{d+2s}}\, dx\,dy, \quad u\in L^2(\Omega), 
\end{equation}
the localization effect produced by the vanishing of the parameter $\e$ in \eqref{functionals0} is due to the convergence $s\to 1^-$. In that work, we were only able to treat a subcritical case
\begin{equation*}
    \sqrt{1-s} \ll \delta.
\end{equation*}
Although a similar heuristic reasoning to that presented above could be performed in this setting as well, we do not know whether the scale 
\begin{equation*}
    \sqrt{1-s} \sim \delta
\end{equation*}
is critical or not; that is, whether three regimes only are allowed in the limit for the functionals \eqref{fractional}. Such functionals do not fit in the framework of convolution-type energies because the dependence of \eqref{fractional} on $1-s$ is different from the dependence of functionals \eqref{functionals0} on the parameter $\e$. This complicates the analysis already in the case $\sqrt{1-s} \ll \delta$, which, in fact, is studied by means of a discretization argument.

\section{Notation and preliminary results}

In this section we fix the notation and state some preliminary results.

We let $d,m$ be two positive integers and consider the Euclidean space $\R^k, k\in\{d,m, m\times d\}$. We identify $\R^d$ with the ambient space containing the domain $\Omega$, a bounded open set with Lipschitz boundary, and $\R^m$ with the target space of the vector-valued functions we consider, usually in $L^p_{\rm loc}(\Omega;\R^m)$ or $W_{\rm loc}^{1,p}(\Omega;\R^m)$. For $k\in\{d,m, m\times d\}$, we let $|\xi|$ denote the Euclidean norm of a vector $\xi\in \R^k$. For $k=d$, we let $B_r(x)$ denote the $d$-dimensional Euclidean ball of centre $x$ and radius $r$ and we let $Q_r$ denote the $d$-dimensional cube of side-length $r$ given by $Q_r:=(0,r)^d$.

Given $t\in \R$, we let $\lfloor t \rfloor$ and $\lceil t \rceil$ denote the lower and the upper integer part of $t$, respectively. Analogously, given $\xi\in \R^d,$ we set $\lfloor \xi \rfloor:=(\lfloor \xi_1 \rfloor,...,\lfloor \xi_d \rfloor)\in \mathbb{Z}^d$ and $\lceil\xi\rceil:=(\lceil \xi_1 \rceil,...,\lceil \xi_d \rceil)\in \mathbb{Z}^d$. We say that a set $E\subseteq \R^k, k\in\{d,m\}$, is measurable provided that it is measurable with respect to the $k$-dimensional Lebesgue measure and, in this case, we let $|E|$ denote its measure.

We say that a function $u: \R^d\to \R^m$ is $Q_1$-periodic if, letting $\{e_1,...,e_d\}$ denote the canonical basis of $\R^d$, it holds
\begin{equation*}
    u(x+e_i)=u(x)
\end{equation*}
for almost every $x\in \R^d$ and for all $i\in\{1,...,d\}$. We recall that, for any $M\in \R^{m\times d}$, the following class of functions have been defined in the introductory section:
\begin{equation*}
    L^p_{\#,M}(Q_1; \R^m):=\{u\in L^{p}_{\rm loc}(\R^d;\R^m) : u-Mx \text{ is } Q_1\text{-periodic}\}
\end{equation*}
and
\begin{equation*}
    W^{1,p}_{\#,M}(Q_1; \R^m):=\{u\in W^{1,p}_{\rm loc}(\R^d;\R^m) : u-Mx \text{ is } Q_1\text{-periodic}\}.
\end{equation*}
In order to treat boundary-value problems in a non-local setting, we introduce the class of functions
\begin{equation*}
    \mathcal{D}_{r,M}(A; \R^m):=\{u\in L^{p}(A;\R^m) : u(x)=Mx \,\text{ for a.e. } x\in A, \text{dist}(x, \R^d\setminus A)<r\},
\end{equation*}
with $A$ an open subset of $\R^d, M\in \R^{m\times d},$ and $r>0$.

Given $A\subseteq\R^d$ open, we say that a sequence of functionals $\mathcal{F}_j: L^p(A;\R^m)\to[0,+\infty], j\in\N$, $\Gamma$-converges with respect to the strong $L^p(A;\R^m)$-topology to a functional $\mathcal{F}: L^p(A;\R^m)\to[0,+\infty]$ as $j\to+\infty$ if the following hold:
\begin{itemize}
    \item[(i)] for every $u\in L^p(A;\R^m)$ and $\{u_j\}_j\subset L^p(A;\R^m)$ such that $u_j\to u$ in $L^p(A;\R^m)$ as $j\to+\infty$ it holds
    \begin{equation*}
        \liminf_{j\to+\infty} \mathcal{F}_j(u_j) \geq \mathcal{F}(u);
    \end{equation*}
    \item[(ii)] for every $u\in L^p(A;\R^m)$ there exists a sequence $\{v_j\}_j\subset L^p(A;\R^m)$ such that $v_j\to u$ in $L^p(A;\R^m)$ as $j\to+\infty$ and it holds
    \begin{equation*}
        \limsup_{j\to+\infty} \mathcal{F}_j(v_j) \leq \mathcal{F}(u).
        \end{equation*}
\end{itemize}
In this case, we write
\begin{equation*}
    \Gamma(L^p)\text{-}\lim_{j\to+\infty} \mathcal{F}_j(u)=\mathcal{F}(u)
    \end{equation*}
for all $u\in L^p(A;\R^m)$.

Throughout the whole work, we shall always assume that the kernel $\rho:\R^d\to[0,+\infty]$ is a Borel function that satisfies \eqref{rho : ball} and \eqref{rho : moment}, and that the density $f:\R^d\times\R^d\times\R^m\to[0,+\infty)$ is a Borel function that satisfies \eqref{f : period}, \eqref{f : convex}, and \eqref{f : growth}. In some specific cases, such assumptions will be enhanced through \eqref{f : continuous}, \eqref{H1}, and \eqref{H2}.

Given $T>0$ and letting $\A_{\rm reg}(\Omega)$ denote the family of open subsets of $\Omega$ with Lipschitz boundary, we introduce for all $j\in \N$ the truncated and `localized' versions of the functionals \eqref{functionals} that are defined as $F_j^T: L^p(\Omega; \R^m)\times \A_{\rm reg}(\Omega)\to[0,+\infty]$,
\begin{equation}\label{truncatedfunctionals}
    F^T_j(u,A)  := \int_{B_T} \rho(\xi)\int_{A_{\ej}(\xi)} f\Bigl(\frac{x}{\dej},\frac{x+\ej\xi}{\dej},\frac{u(x+\ej\xi)-u(x)}{\ej}\Bigr)\, dx\,d\xi,
\end{equation}
 where for all $\e>0$ and $\xi\in \R^d$ we adopt the notation
\begin{equation*}
    A_\e(\xi):=\{x\in A : x+\e\xi\in A\}.  
    \end{equation*}
If $A=\Omega$ we simply write $F_j^T(u)$ in place of $F_j^T(u,\Omega)$. The notation for the localized functionals is immediately adapted to the non-truncated energies $\{F_j\}_j$.

Now we recall some useful facts about $\Gamma$-convergence in the non-local context. In each of the following statements, the $\Gamma$-limit is intended with respect to the strong convergence in $L^p(\Omega;\R^m)$. 

The first fact concerns the possibility of simplifying the asymptotic analysis by considering kernels whose support is a ball, see \cite[Lemma 5.1]{AABPT}.

\begin{lemma}\label{lemma : truncated functionals} Assume there exist a sequence $\{T_h\}_h$ monotonically increasing to $+\infty$ and functionals $\{F^{T_h}(\cdot,\cdot)\}_h$ such that for all $h\in\N$ we have
\begin{equation*}
    \Gamma(L^p)\text{-}\lim_{j\to+\infty} F^{T_h}_j(u,A) = F^{T_h}(u,A) 
\end{equation*}
for every $u\in L^p(\Omega; \R^m)$ and $A\in \A_{\rm reg}(\Omega)$. Then
\begin{equation*}
   \Gamma(L^p)\text{-}\lim_{j\to+\infty} F_j(u,A) =\lim_{h\to+\infty} F^{T_h}(u,A) 
\end{equation*}
for every $u\in L^p(\Omega; \R^m)$ and $A\in \A_{\rm reg}(\Omega)$.
\end{lemma}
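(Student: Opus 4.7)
The argument is a monotone-convergence-plus-diagonalization scheme. Set
\begin{equation*}
F^\infty(u,A):=\lim_{h\to+\infty} F^{T_h}(u,A).
\end{equation*}
Since $f\geq 0$ and $T_h\nearrow +\infty$, we have the pointwise monotonicity $F^{T_h}_j(u,A) \leq F^{T_{h+1}}_j(u,A) \leq F_j(u,A)$, which passes to the $\Gamma$-limit in $j$ by the monotonicity of $\Gamma$-limits under pointwise inequalities, giving $F^{T_h}(u,A) \leq F^{T_{h+1}}(u,A)$; hence $F^\infty$ is well-defined in $[0,+\infty]$. The $\Gamma$-liminf inequality for $\{F_j\}_j$ is then immediate: for any $u_j \to u$ in $L^p(\Omega;\R^m)$,
\begin{equation*}
\liminf_j F_j(u_j,A) \geq \liminf_j F^{T_h}_j(u_j,A) \geq F^{T_h}(u,A) \quad \text{for every } h\in\N,
\end{equation*}
and letting $h\to+\infty$ yields $\liminf_j F_j(u_j,A) \geq F^\infty(u,A)$.

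For the matching $\Gamma$-limsup, I may assume $F^\infty(u,A) < +\infty$. For each $h$, I select a recovery sequence $\{v^h_j\}_j$ for $F^{T_h}(u,A)$, so that $v^h_j \to u$ in $L^p$ and $\limsup_j F^{T_h}_j(v^h_j,A) \leq F^{T_h}(u,A)$. By the growth bound \eqref{f : growth}, the ``truncation error'' is controlled by
\begin{equation*}
F_j(v^h_j,A) - F^{T_h}_j(v^h_j,A) \leq \beta \int_{\R^d \setminus B_{T_h}} \rho(\xi) \int_{A_{\ej}(\xi)} \Bigl|\frac{v^h_j(x+\ej\xi)-v^h_j(x)}{\ej}\Bigr|^p dx\, d\xi.
\end{equation*}
If, for a suitable choice of $\{v^h_j\}_j$, the right-hand side vanishes as $h\to+\infty$ uniformly in $j$, then a diagonal extraction $v_j := v^{h(j)}_j$ with $h(j)\to+\infty$ slowly enough gives $v_j\to u$ in $L^p$ and $\limsup_j F_j(v_j,A) \leq F^\infty(u,A)$, closing the proof.

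The main obstacle is precisely this uniform tail control, since a generic $L^p$-recovery sequence offers no a priori bound on its non-local difference quotients. My strategy is to reduce to the dense class $u\in W^{1,\infty}(\Omega;\R^m)$, where one can arrange the recovery sequences themselves to be uniformly Lipschitz (with $\|\nabla v^h_j\|_{L^\infty} \leq C\|\nabla u\|_{L^\infty}$ independently of $j$ and $h$) by a truncation/averaging argument compatible with the $p$-growth of $f$; the pointwise bound $|(v^h_j(x+\ej\xi)-v^h_j(x))/\ej|\leq C\|\nabla u\|_{L^\infty}|\xi|$ then dominates the tail by a constant multiple of $\int_{|\xi|>T_h}\rho(\xi)|\xi|^p d\xi$, which tends to zero by \eqref{rho : moment}. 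The limsup inequality then extends to general $u\in L^p(\Omega;\R^m)$ by the $L^p$-lower semicontinuity of the $\Gamma$-limsup combined with the density of $W^{1,\infty}$ in the effective domain of $F^\infty(\cdot,A)$, via the standard relaxation step in $\Gamma$-convergence.
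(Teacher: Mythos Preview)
Your $\Gamma$-liminf argument is correct and standard. The gap is in the $\Gamma$-limsup step, specifically in the assertion that for $u\in W^{1,\infty}(\Omega;\R^m)$ one can arrange recovery sequences $v^h_j$ for $F^{T_h}_j$ that are \emph{uniformly Lipschitz}, with $\|\nabla v^h_j\|_{L^\infty}\le C\|\nabla u\|_{L^\infty}$ independently of $j,h$. This is precisely what fails in the homogenization setting of the paper: optimal (or near-optimal) competitors for $F^{T_h}_j(\cdot,A)$ must oscillate at the scale $\delta_j$ of the periodic coefficient, and their difference quotients are in general only bounded in $L^p$, not in $L^\infty$. Concretely, for $f(x,y,z)=a(x)|z|^p$ and $u(x)=Mx$, the natural recovery sequence is $v_j(x)=\delta_j u_*(x/\delta_j)$ with $u_*\in L^p_{\#,M}(Q_1;\R^m)$ an (almost) minimizer of the non-local cell problem; since $u_*$ is merely $L^p$, the functions $D^{\xi}_{\e_j}v_j$ are bounded in $L^p$ but not in $L^\infty$, and no ``truncation/averaging'' produces a uniformly Lipschitz modification without destroying the recovery property. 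The vague reference to an Acerbi--Fusco type truncation does not apply here, because one has no a priori $W^{1,p}$ bound on $v^h_j$---only a non-local energy bound.

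The paper does not prove this lemma but refers to \cite[Lemma~5.1]{AABPT}, whose argument proceeds differently and avoids any regularity assumption on recovery sequences. The point is that a recovery sequence $v^h_j$ for $F^{T_h}$ automatically satisfies the \emph{short-range} bound
\[
\sup_j \int_{B_{r_0}}\int_{A_{\e_j}(\xi)}\Bigl|\frac{v^h_j(x+\e_j\xi)-v^h_j(x)}{\e_j}\Bigr|^p dx\,d\xi \le \frac{1}{\alpha c_0}\,\sup_j F^{T_h}_j(v^h_j,A)<+\infty,
\]
by \eqref{rho : ball} and \eqref{f : growth}. A telescoping/dyadic estimate (the long-range-from-short-range lemma in \cite{AABPT}; compare Lemma~\ref{lemma : diadic} here in the periodic case) then controls, for every $A'\Subset A$ and $\e_j$ small enough, the tail
\[
\int_{|\xi|>T_h}\rho(\xi)\int_{A'_{\e_j}(\xi)}\Bigl|\frac{v^h_j(x+\e_j\xi)-v^h_j(x)}{\e_j}\Bigr|^p dx\,d\xi
\le C\Bigl(\int_{|\xi|>T_h}\rho(\xi)|\xi|^p\,d\xi\Bigr)\cdot(\text{short-range energy on }A),
\]
which vanishes as $h\to\infty$ by \eqref{rho : moment}, uniformly in $j$. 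One then obtains $\Gamma\text{-}\limsup_j F_j(u,A')\le F^\infty(u,A)$ for every $A'\Subset A$, and concludes by inner regularity of the set function $A\mapsto \Gamma\text{-}\limsup_j F_j(u,A)$ (or, equivalently, by passing through a $\Gamma$-convergent subsequence with integral representation and using Urysohn). In short: the correct tail control comes from the \emph{non-local energy bound} on the recovery sequence, not from any pointwise Lipschitz bound.
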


Following \cite[Proposition 6.1]{AABPT}, we prove that if an integral representation is in place for the $\Gamma$-limit of $\{F^T_j\}_j$ with $T>0$, then, as a consequence of the assumption \eqref{f : period}, it holds that the integrand only depends on the gradient variable.

\begin{lemma}\label{lemma : homogeneity} Let $T>0$ and assume there exists a Carathéodory function $\varphi:\Omega\times \R^{m\times d}\to[0,+\infty)$ such that 
\begin{equation*}
     \Gamma(L^p)\text{-}\lim_{j\to+\infty} F^T_j(u,A) = \int_A \f(x, \nabla u)\, dx
\end{equation*}
for every $u\in W^{1,p}(\Omega; \R^m)$ and $A\in\A_{\rm reg}(\Omega)$. Then $\f$ is independent of the first variable.
\end{lemma}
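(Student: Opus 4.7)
The plan is to perform a translation argument grounded on the $Q_1$-periodicity assumption \eqref{f : period} and the vanishing of $\delta_j$. Since $\varphi$ is already a Carathéodory function, by Lebesgue's differentiation theorem it suffices to prove the set-function identity
\[\int_A \varphi(x, M)\, dx = \int_{A+v} \varphi(x, M)\, dx\]
for every $M \in \R^{m \times d}$ and every $v \in \R^d$ such that $A, A+v \in \A_{\rm reg}(\Omega)$; running $v$ along a countable dense set will then force $x \mapsto \varphi(x, M)$ to be a.e. constant on $\Omega$.

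To establish the identity, I would fix $M$, $A$, $v$ as above and set $u_M(x) := Mx$. Exploiting the $\Gamma$-convergence assumption, I would pick a recovery sequence $\{u_j\}_j$ for $u_M$ on $A$, so that $u_j \to u_M$ in $L^p(A; \R^m)$ and $F_j^T(u_j, A) \to \int_A \varphi(x, M)\, dx$. Since $\dej \to 0$, I would approximate $v$ by $\etaj \in \dej \mathbb{Z}^d$ with $\etaj \to v$, and define the translated sequence $w_j(y) := u_j(y - \etaj)$ on $A + \etaj$, extended to $\Omega$ for instance as $M(y - v)$ on the complement.

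The heart of the argument is the identity
\[F_j^T(w_j, A + \etaj) = F_j^T(u_j, A),\]
obtained by the change of variable $y = x + \etaj$ inside $F_j^T(w_j, A + \etaj)$: the substitution produces a shift $x/\dej \mapsto x/\dej + \etaj/\dej$ inside each spatial argument of $f$, and since $\etaj/\dej \in \mathbb{Z}^d$, the $Q_1$-periodicity \eqref{f : period} leaves the integrand unchanged. Next, $w_j \to M(\cdot - v)$ in $L^p(A+v;\R^m)$, which is a function with gradient $M$, so the $\Gamma$-$\liminf$ inequality combined with the integral representation on $A+v$ gives
\[\int_{A+v} \varphi(x, M)\, dx \leq \liminf_j F_j^T(w_j, A + v) = \liminf_j F_j^T(w_j, A + \etaj) = \lim_j F_j^T(u_j, A) = \int_A \varphi(x, M)\, dx,\]
after checking that the contribution to $F_j^T(w_j, \cdot)$ of the vanishing symmetric difference $(A+v) \triangle (A+\etaj)$ is infinitesimal. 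Exchanging the roles of $A$ and $A+v$ (equivalently, replacing $v$ by $-v$) yields the reverse inequality, so equality holds.

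The main obstacle I anticipate is the last equality between the two $\liminf$'s, i.e.\ the control of the discrepancy between $F_j^T(w_j, A+v)$ and $F_j^T(w_j, A+\etaj)$. The two integration domains differ on the thin layer $(A+v) \triangle (A+\etaj)$, whose measure tends to $0$; the infinitesimality of the corresponding energy contribution relies on a uniform $L^p$-bound for the difference quotients of $u_j$, which comes from the equi-boundedness of $F_j^T(u_j, A)$ together with \eqref{rho : ball} and the growth condition \eqref{f : growth}.
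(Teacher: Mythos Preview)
Your translation strategy matches the paper's, but the step you flagged as ``the main obstacle'' is a genuine gap that your proposed fix does not close. The extra energy on the layer $(A+v)\triangle(A+\etaj)$ is not governed by the difference quotients of $u_j$. The dangerous contribution comes from pairs $(x,x+\ej\xi)$ straddling $\partial(A+\etaj)\cap(A+v)$, where $w_j$ jumps from the translated value $u_j(\cdot-\etaj)$ to the affine extension $M(\cdot-v)$. Writing $z=x-\etaj$, the corresponding difference quotient of $w_j$ equals
\[
M\xi+\frac{M(\etaj-v)}{\ej}+\frac{Mz-u_j(z)}{\ej},
\]
and the last term is in general unbounded in $L^p$: you only know $u_j\to Mx$ in $L^p$, with no rate in $\ej$. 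Neither \eqref{rho : ball} nor the uniform bound on $\int_{B_T}\int_A\bigl|\tfrac{u_j(x+\ej\xi)-u_j(x)}{\ej}\bigr|^p\,dx\,d\xi$ controls this quantity, so the interface energy need not vanish.

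The paper avoids any extension by an inner approximation. It compares $F(Mx,B_{r'}(y))$ with $F(Mx,B_r(y'))$ for $r'<r$: since $B_{r'}(y)-\dej\lfloor(y-y')/\dej\rfloor\subset B_r(y')$ for $j$ large, the translated recovery sequence is defined on all of $B_{r'}(y)$, and the inclusion of integration domains directly yields $F_j^T(v_j,B_{r'}(y))\le F_j^T(u_j,B_r(y'))$. Letting $r'\uparrow r$ and swapping $y,y'$ gives equality. Your argument is repaired in exactly the same way: replace $A+v$ by any $A'\Subset A+v$, observe $A'-\etaj\subset A$ for $j$ large so that $w_j:=u_j(\cdot-\etaj)$ needs no extension on $A'$, deduce $\int_{A'}\f(x,M)\,dx\le\int_A\f(x,M)\,dx$, and finally let $A'\nearrow A+v$.
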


\begin{proof}
    We set 
    \begin{equation*}
        F(u,A):= \int_A \f(x, \nabla u)\, dx
    \end{equation*}
    for $u\in W^{1,p}(\Omega;\R^m)$ and $A\in\A_{\rm reg}(\Omega)$ and we note that the conclusion follows if we prove that 
    \begin{equation*}
        F(Mx, B_r(y))= F(Mx, B_r(y'))
    \end{equation*}
    for every $M\in \R^{m\times d}$ and for every $y,y'\in \R^d, r>0$ such that $B_r(y)$ and $B_r(y')$ are contained in $\Omega$. To prove this claim, it suffices to show that
\begin{equation*}
     F(Mx, B_{r'}(y)) \leq F(Mx, B_r(y'))
\end{equation*}
for every $M,y,y',r$ as above and $r'\in(0,r)$.

Let $\{u_j\}_j$ be such that $u_j\to Mx$ in $L^p(B_r(y'); \R^m)$ as $j\to+\infty$ and
\begin{equation}\label{homogeneity - recovery}
    \lim_{j\to+\infty} F^T_j(u_j, B_r(y')) = F(Mx, B_r(y')).  
\end{equation}
Note that, upon assuming that $j$ is large enough, we have
\begin{equation*}
    B_{r'}(y) -\dej\Bigl\lfloor \frac{y-y'}{\dej}\Bigr\rfloor \subseteq B_r(y')
\end{equation*}
and then, the functions
\begin{equation*}
    v_j(x):= u_j \Bigl(x-\dej\Bigl\lfloor \frac{y-y'}{\dej}\Bigr\rfloor\Bigr)+\dej M\Bigl\lfloor \frac{y-y'}{\dej}\Bigr\rfloor , \quad  j\in\N,
\end{equation*}
are well defined on $B_{r'}(y)$ and such that $v_j\to Mx$ in $L^p(B_{r'}(y); \R^m)$ as $j\to+\infty$. Moreover, by a change of variables and assumption \eqref{f : period}, it holds
\begin{align*}
     F^T_j(v_j, B_{r'}(y)) & = \int_{B_T} \rho(\xi)\int_{(B_{r'}(y))_{\ej}(\xi)} f\Bigl(\frac{x}{\dej},\frac{x+\ej\xi}{\dej},\frac{v_j(x+\ej\xi)-v_j(x)}{\ej}\Bigr)\, dx\,d\xi  \\
     & = \int_{B_T} \rho(\xi)\int_{\bigl(B_{r'}(y)-\dej\bigl\lfloor \frac{y-y'}{\dej}\bigr\rfloor\bigr)_{\ej}(\xi)} f\Bigl(\frac{x}{\dej},\frac{x+\ej\xi}{\dej},\frac{u_j(x+\ej\xi)-u_j(x)}{\ej}\Bigr)\, dx\,d\xi \\
     & \leq F^T_j(u_j, B_{r}(y')).
\end{align*}
Therefore, recalling \eqref{homogeneity - recovery}, we infer
\begin{equation*}
   F(Mx, B_r(y')) = \lim_{j\to+\infty} F^T_j(u_j, B_r(y')) \geq \liminf_{j\to+\infty}  F^T_j(v_j, B_{r'}(y)) \geq F(Mx, B_{r'}(y)),
\end{equation*}
which concludes the proof.
\end{proof}

The following compactness and integral representation result is a consequence of  \cite[Theorem 5.1]{AABPT} and Lemma \ref{lemma : homogeneity}, and constitutes the starting point of our analysis.

\begin{theorem}\label{thm : intrep}
Let $T>0$. There exist a subsequence $\{j_k\}_k\subset \N$ and $\f:\R^{m\times d}\to[0,+\infty)$ a quasiconvex function such that
    \begin{equation*}
    \Gamma(L^p)\text{-}\lim_{k\to+\infty} F^T_{j_k}(u,A) = \begin{cases}
            \displaystyle \int_{A} \f(\nabla u)\,dx & \text{ if } u\in W^{1,p}(A;\R^m), \\
            +\infty & \text{ if } u\in L^p(\Omega;\R^m)\setminus W^{1,p}(A;\R^m)
        \end{cases}
    \end{equation*}
    for every $A\in \A_{\rm reg}(\Omega)$.
\end{theorem}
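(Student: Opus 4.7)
The plan is to derive the statement by combining the general compactness and integral representation theorem \cite[Theorem 5.1]{AABPT} with Lemma \ref{lemma : homogeneity}. First, I would apply [AABPT, Theorem 5.1] to the family $\{F_j^T\}_j$: since the truncated kernel $\rho\mathbf{1}_{B_T}$ is integrable and compactly supported in $\R^d$, and since $f$ is Borel measurable, $Q_1$-periodic in $x$ and $y$, convex in $z$, and satisfies the $p$-growth \eqref{f : growth}, the sequence fits the framework of that theorem. It yields a subsequence $\{j_k\}_k$ and a Carath\'eodory function $\psi:\Omega\times\R^{m\times d}\to[0,+\infty)$, quasiconvex in the second variable and with standard $p$-growth, such that
$$\Gamma(L^p)\text{-}\lim_{k\to+\infty} F^T_{j_k}(u,A) = \int_A \psi(x,\nabla u)\,dx$$
for every $u\in W^{1,p}(A;\R^m)$ and every $A\in\A_{\rm reg}(\Omega)$; the subsequence is chosen once and for all via a diagonal procedure along a countable dense subfamily of $\A_{\rm reg}(\Omega)$, which is already encoded in [AABPT, Theorem 5.1].

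Second, I would rule out finiteness outside $W^{1,p}$. Combining \eqref{rho : ball} with the lower bound in \eqref{f : growth} yields
$$F_j^T(u,A)\geq c_0\,\alpha\int_{B_{\min\{r_0,T\}}}\int_{A_{\ej}(\xi)}\Bigl|\frac{u(x+\ej\xi)-u(x)}{\ej}\Bigr|^p\,dx\,d\xi,$$
and the equi-coerciveness/Rellich-type theorem of Ponce \cite[Theorems 1.1--1.2]{P2} (see also the non-local compactness in \cite{AABPT}) then forces any $L^p(A;\R^m)$-limit of a sequence $\{u_j\}_j$ with $\sup_j F_j^T(u_j,A)<+\infty$ to lie in $W^{1,p}(A;\R^m)$. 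Hence the $\Gamma$-liminf of $\{F^T_{j_k}(\cdot,A)\}_k$ is $+\infty$ on $L^p(A;\R^m)\setminus W^{1,p}(A;\R^m)$, matching the second branch of the representation.

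Finally, I would feed the integrand $\psi$ into Lemma \ref{lemma : homogeneity}: it yields $\psi(x,M)=\f(M)$ independent of $x$. Quasiconvexity of $\f$ and the bound $0\leq \f(M)\leq C(1+|M|^p)$ are inherited from $\psi$, so $\f:\R^{m\times d}\to[0,+\infty)$ is quasiconvex as required, and the two-branch representation of the statement is established.

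The main technical subtlety sits in the first step: guaranteeing that a single subsequence $\{j_k\}_k$ simultaneously represents the $\Gamma$-limit as a set function on \emph{every} $A\in\A_{\rm reg}(\Omega)$. This is precisely the content of the localization method behind [AABPT, Theorem 5.1], which relies on a Dal Maso--Modica-type $\Gamma$-compactness statement together with the inner regularity of the limiting set function and the standard De Giorgi--Letta criterion. Granted that black box, the principal effort in writing out the proof reduces to checking that our hypotheses match the framework of [AABPT, Theorem 5.1] and then invoking Lemma \ref{lemma : homogeneity} to eliminate the $x$-dependence of $\psi$.
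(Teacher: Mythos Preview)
Your approach is correct and identical to the paper's, which simply states the theorem as a consequence of \cite[Theorem~5.1]{AABPT} and Lemma~\ref{lemma : homogeneity} without writing out further details. One small slip: the truncated kernel $\rho\mathbf{1}_{B_T}$ need not be integrable (only $\rho(\xi)|\xi|^p$ is, by \eqref{rho : moment}; indeed \eqref{H2} explicitly allows $\rho\notin L^1$), but integrability is not among the hypotheses of \cite[Theorem~5.1]{AABPT}, so your argument is unaffected.
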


We recall \cite[ Proposition 5.4]{AABPT}, which establishes the stability of $\Gamma$-convergence for fixed boundary conditions. 

\begin{proposition}\label{prop : boundary} Let $T,s>0$, $M\in\R^{m\times d}$, and assume there exists a functional $F: L^p(\Omega;\R^m)\times \A_{\rm reg}(\Omega)\to[0,+\infty]$ such that 
\begin{equation*}
    \Gamma(L^p)\text{-}\lim_{j\to+\infty} F^T_j(u,A) = F(u,A) 
\end{equation*}
    for every $u\in L^p(\Omega; \R^m)$ and $A\in \A_{\rm reg}(\Omega)$. Then
    \begin{equation*}
        \lim_{j\to+\infty}\inf\{F^T_j(u,A) : u\in \mathcal{D}_{s\e_j,M}(A; \R^m)\} = \inf \{F(u,A) : u-Mx\in W^{1,p}_0(A; \R^m)\}
    \end{equation*}
    for every $A\in \A_{\rm reg}(\Omega)$.
\end{proposition}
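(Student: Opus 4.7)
I would prove the two inequalities that make up the claim separately, relying on the equi-coerciveness of $\{F^T_j\}_j$ (a consequence of \eqref{rho : ball}, \eqref{rho : moment}, and \eqref{f : growth} via a non-local Poincaré-type inequality, cf.\ \cite{P2, AABPT}) and, for the $\limsup$ half, on a De Giorgi--style slicing argument adapted to the non-local setting.

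\emph{Lower bound.} If $u_j\in\mathcal{D}_{s\ej,M}(A;\R^m)$ is any sequence with $\sup_j F^T_j(u_j,A)<+\infty$, equi-coerciveness yields, up to subsequences, a limit $u\in W^{1,p}(A;\R^m)$ with $u_j\to u$ in $L^p(A;\R^m)$ and $\nabla u_j\rightharpoonup \nabla u$ weakly in $L^p$. Since $u_j\equiv Mx$ on the strip $\{x\in A : \mathrm{dist}(x,\R^d\setminus A)<s\ej\}$, which exhausts a neighbourhood of $\partial A$, passing to the limit in the trace of $u_j-Mx$ gives $u-Mx\in W^{1,p}_0(A;\R^m)$. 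The $\Gamma$-$\liminf$ inequality then yields
\begin{equation*}
\liminf_{j\to+\infty} F^T_j(u_j,A)\geq F(u,A)\geq \inf\{F(v,A): v-Mx\in W^{1,p}_0(A;\R^m)\},
\end{equation*}
and taking the infimum over admissible $u_j$ gives the lower bound.

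\emph{Upper bound.} Fix $u$ with $u-Mx\in W^{1,p}_0(A;\R^m)$ and $F(u,A)<+\infty$, and let $\{v_j\}_j$ be a recovery sequence for $u$ given by the $\Gamma$-convergence assumption. The task is to modify $v_j$ near $\partial A$ to obtain $u_j\in\mathcal{D}_{s\ej,M}(A;\R^m)$ with the same $L^p$-limit and $\limsup_j F^T_j(u_j,A)\leq F(u,A)$. Fix $\eta>0$ small and $N\in\N$ large, and partition the strip $\{x\in A : \mathrm{dist}(x,\partial A)\in(s\ej,\eta)\}$ into $N$ sublayers of width comparable to $\eta/N$, each much larger than $T\ej$ once $j$ is large. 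In the $k$th sublayer I would patch $v_j$ (on the inner side) to $Mx$ (on the outer side, including the mandatory $s\ej$-strip) by a sharp cutoff. Using \eqref{f : growth} and \eqref{f : convex}, the energy increment splits into three contributions: the $F^T_j$-energies of $v_j$ and of $Mx$ on an $O(\eta)$-neighbourhood of $\partial A$, plus a non-local cross term coming from pairs $(x,x+\ej\xi)$, $|\xi|\leq T$, that straddle the $k$th sublayer. Averaging over $k=1,\dots,N$ and selecting the best $k=k_j$ bounds the cross term by $\frac{1}{N}$ times a quantity uniformly controlled by $F^T_j(v_j,A)+|M|^p|A|$. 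Passing to the limit first in $j$, then sending $N\to+\infty$ and $\eta\to0^+$ (exploiting absolute continuity of the relevant integrals near $\partial A$, including $\int_{\{x:\mathrm{dist}(x,\partial A)<\eta\}}|\nabla u|^p\,dx\to 0$ since $u-Mx\in W^{1,p}_0$) yields the desired $\limsup$ estimate.

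\emph{Main obstacle.} The delicate point is precisely the non-local cross term produced by the cutoff: contrary to the local case, even a sharp interchange between $v_j$ and $Mx$ generates long-range interactions between the unchanged and modified parts. The truncation $|\xi|\leq T$ together with sublayers of width much greater than $T\ej$ confines these interactions inside the sublayer where the modification takes place, and the averaging over $N$ disjoint sublayers turns a bounded total cross cost into a vanishing per-sublayer cost. This is the mechanism that makes the slicing scheme uniform in $j$ and allows the boundary datum to be enforced without inflating the limit energy.
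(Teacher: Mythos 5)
The paper itself does not prove this proposition: it is quoted verbatim from \cite[Proposition 5.4]{AABPT}, so the comparison can only be with the standard argument there, whose overall architecture (equi-coercivity for the liminf inequality, modification of a recovery sequence near $\partial A$ via a De Giorgi slicing/fundamental-estimate scheme for the limsup inequality) you have correctly identified. However, your implementation of the limsup half contains a genuine error: the \emph{sharp} cutoff does not work, and the claimed bound on the cross term is false. For pairs $(x,x+\ej\xi)$ straddling the patching interface the difference quotient equals $(v_j(x+\ej\xi)-Mx)/\ej=(v_j(x+\ej\xi)-M(x+\ej\xi))/\ej+M\xi$, and the first summand is of size $|u-Mx|/\ej$ near the cut, which is \emph{not} of order one: the straddling set has measure of order $\ej$, so the cross contribution scales like $\ej^{1-p}|u-Mx|^p$ evaluated at the interface, which diverges as $j\to+\infty$ for every fixed $\eta$, $N$ and every choice of sublayer $k$ (take $d=1$, $M=0$, $u$ a hat function: the cost is of order $\ej^{1-p}(k\eta/N)^p$). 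Averaging over $k$ gains a factor $1/N$ but not the missing factor $\ej^{p-1}$, so the cross term is not ``uniformly controlled by $F^T_j(v_j,A)+|M|^p|A|$''. The correct mechanism is to interpolate with Lipschitz cutoffs $\varphi_k$, $|\nabla\varphi_k|\lesssim N/\eta$, setting $u_j=\varphi_k v_j+(1-\varphi_k)Mx$: then the dangerous term is $(\varphi_k(x+\ej\xi)-\varphi_k(x))(v_j(x)-Mx)/\ej$, bounded by $\|\nabla\varphi_k\|_\infty|\xi|\,|v_j(x)-Mx|$, so the $\ej$'s cancel and, by \eqref{f : convex}, \eqref{f : growth} and \eqref{rho : moment}, the error is of order $(N/\eta)^p\int_{\{{\rm dist}(x,\partial A)<\eta\}}|v_j-Mx|^p\,dx$. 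This still does not vanish for a general $u$ with $u-Mx\in W^{1,p}_0(A;\R^m)$, so one must either first reduce, by density of functions equal to $Mx$ near $\partial A$ and the $p$-growth/continuity of the limit energy on $W^{1,p}$, to such data, or invoke Hardy's inequality to let $\eta\to0$; your appeal to ``absolute continuity of $\int|\nabla u|^p$ near $\partial A$'' does not by itself control this term.

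There is also a gap in the liminf half: the competitors $u_j\in\mathcal{D}_{s\ej,M}(A;\R^m)$ are merely $L^p$ functions, so ``$\nabla u_j\rightharpoonup\nabla u$'' and ``passing to the limit in the trace of $u_j-Mx$'' are not meaningful, and $L^p$-convergence together with $u_j=Mx$ on a strip of vanishing width $s\ej$ does \emph{not} imply $u-Mx\in W^{1,p}_0(A;\R^m)$ without using the energy bound near $\partial A$ (without it, $u_j$ may transition abruptly just inside the strip and the limit may have arbitrary boundary values). The standard repair is to extend $u_j$ by $Mx$ outside $A$ and note that the extended functions have equibounded truncated energy on a neighbourhood of $\overline A$: interactions with $|\xi|\le\min\{s,r_0\}$ that cross $\partial A$ only involve points where the extension equals $Mx$, and interior interactions are controlled by $F^T_j(u_j,A)$; the compactness results (\cite{P2}, \cite{AABPT}, relying on \eqref{rho : ball}, \eqref{rho : moment}, \eqref{f : growth}) then give a $W^{1,p}_{\rm loc}$ limit which equals $Mx$ outside $A$, whence $u-Mx\in W^{1,p}_0(A;\R^m)$ since $A$ has Lipschitz boundary. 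With these two repairs your scheme becomes essentially the proof of \cite[Proposition 5.4]{AABPT}; as written, both halves have steps that would fail.
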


To conclude, we state a simplified version of a useful result about the lower semicontinuity of integral functionals with respect to the strong-weak convergence in $L^p\times L^p$, see \cite[Theorem 7.5]{FL}.

\begin{theorem}\label{thm : LSC}
    Let $p\in(1,+\infty)$, $\ell\in \N^+,$ $E\subseteq \R^\ell$ be measurable,
and $\Psi: E\times \R^d\times \R^m\to[0,+\infty)$ be a Borel function such that
\begin{equation*}
    \Psi(s,\cdot,\cdot) \text{ is continuous for almost every } s\in E
\end{equation*}
and 
\begin{equation*}
    \Psi(s,t,\cdot) \text{ is convex for almost every } s\in E \text{ and for every } t\in\R^d.
\end{equation*}
Then the functional
\begin{equation*}
    (v,u)\in L^p(E; \R^d)\times L^p(E; \R^m)\mapsto \int_{E} \Psi(s,v(s), u(s))\,ds
\end{equation*}
is sequentially weakly lower semicontinuous with respect to the strong-weak convergence in $L^p(E; \R^d)\times L^p(E; \R^m)$.
\end{theorem}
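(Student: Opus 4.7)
My plan is to combine Scorza Dragoni's theorem, Egorov's theorem, and a classical Ioffe-type representation of $\Psi$ as a countable supremum of Carathéodory functions affine in the last variable. The Scorza Dragoni step trades the Borel regularity of $\Psi$ for joint continuity on large subsets of $E$; Egorov upgrades the strong $L^p$-convergence of $v_n$ to uniform convergence on the same subsets; and the affine decomposition reduces the strong-weak lower semicontinuity of $(v,u)\mapsto \int_E \Psi(s, v, u)\,ds$ to passing to the limit in expressions that are linear in $u_n$ and continuous in $v_n$.

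Given sequences $v_n \to v$ strongly in $L^p(E; \R^d)$ and $u_n \rightharpoonup u$ weakly in $L^p(E; \R^m)$, I would extract a subsequence (not relabeled) so that $\liminf_n \int_E \Psi(s, v_n, u_n)\,ds$ is achieved as a finite limit (otherwise the inequality is trivial) and so that $v_n \to v$ pointwise a.e. on $E$. For each $k \in \N$, Scorza Dragoni applied to $\Psi$ (restricted to an exhausting sequence of bounded balls in the $(t,z)$ variables) together with Egorov produces a closed set $E_k \subseteq E$ with $|E \setminus E_k| < 1/k$ on which $\Psi$ is jointly continuous in $(s,t,z)$ and $v_n \to v$ uniformly. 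By non-negativity of $\Psi$, it suffices to prove the liminf inequality with $E$ replaced by $E_k$ and then let $k \to \infty$ by monotone convergence.

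On $E_k$ the non-negative jointly continuous function $\Psi$ is convex in $z$, and by an Ioffe-type construction there exists a countable family $\{(a_i, b_i)\}_{i\in\N}$ of Carathéodory functions on $E_k \times \R^d$, with $b_i$ of at most polynomial growth in $t$, such that
\begin{equation*}
    \Psi(s, t, z) = \sup_{i\in\N}\bigl\{a_i(s, t) + \langle b_i(s, t), z\rangle\bigr\}.
\end{equation*}
For any finite measurable partition $E_k = \bigsqcup_j A_j$ and any choice of indices $i_j$,
\begin{equation*}
    \int_{E_k} \Psi(s, v_n, u_n)\,ds \geq \sum_j \int_{A_j} \bigl[a_{i_j}(s, v_n) + \langle b_{i_j}(s, v_n), u_n\rangle\bigr]\,ds.
\end{equation*}
On each $A_j$, the uniform convergence $v_n\to v$ combined with the continuity of $a_{i_j}, b_{i_j}$ gives $a_{i_j}(\cdot, v_n) \to a_{i_j}(\cdot, v)$ in $L^1(A_j)$ and $b_{i_j}(\cdot, v_n) \to b_{i_j}(\cdot, v)$ in $L^{p'}(A_j;\R^m)$ (equi-integrability follows from the polynomial growth bound and the $L^p$-boundedness of $\{v_n\}_n$); pairing the latter strong convergence with the weak convergence $u_n \rightharpoonup u$ in $L^p$ passes to the limit in the linear term. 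Taking $\liminf_n$ on both sides, then the supremum over finite partitions and choices of indices, reconstructs $\int_{E_k} \Psi(s, v, u)\,ds$, and letting $k\to\infty$ concludes.

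The technical heart of the argument is the Ioffe representation of $\Psi$ by Carathéodory affine minorants with enough regularity (joint continuity on $E_k$ after Scorza Dragoni and a polynomial growth bound on the $b_i$) to allow the simultaneous exploitation of the strong convergence of $v_n$ and the weak convergence of $u_n$. Once this structural decomposition is in place, the strong-weak passage to the limit in each affine piece is a routine combination of dominated convergence, equi-integrability, and the weak-strong duality pairing.
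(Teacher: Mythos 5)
The paper does not actually prove this statement: Theorem \ref{thm : LSC} is stated as a simplified version of \cite[Theorem 7.5]{FL} and is used as a black box, so there is no internal proof to compare against. Your argument is, in outline, the classical De Giorgi--Ioffe route to exactly this kind of result (Scorza Dragoni plus Egorov to localize to sets where $\Psi$ is jointly continuous and $v_n\to v$ uniformly, representation of the convex-in-$z$ integrand as a countable supremum of integrands affine in $z$ with regular coefficients, strong--weak pairing in each affine piece, then a partition/monotone-convergence step and exhaustion of $E$ using $\Psi\ge 0$), and this strategy does prove the theorem as stated.

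One step, as written, is not justified and in general false: the claim that the coefficients $b_i$ can be taken with ``at most polynomial growth in $t$''. The theorem imposes no upper growth condition on $\Psi$, so the affine minorants produced by the Ioffe/De Giorgi construction inherit no growth in $t$; moreover, polynomial growth of unspecified degree would not turn $L^p$-boundedness of $\{v_n\}_n$ into $L^{p'}$-equi-integrability of $b_i(\cdot,v_n)$ (one would need growth of order at most $p-1$). The good news is that no growth is needed: choose $E_k$ in addition bounded and contained in $\{|v|\le k\}$. Then the uniform convergence $v_n\to v$ on $E_k$ forces $(s,v_n(s))$ to lie in a fixed compact subset of $E_k\times\R^d$ for $n$ large, so continuity of $a_{i_j},b_{i_j}$ there yields uniform, hence (since $|E_k|<+\infty$) $L^1$ and $L^{p'}$, convergence of $a_{i_j}(\cdot,v_n)$ and $b_{i_j}(\cdot,v_n)$, which is all the pairing with $u_n\rightharpoonup u$ requires. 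Two smaller points to tighten in the same spirit: Egorov (and the bound $|E\setminus E_k|<1/k$) needs $|E|<+\infty$, so for a general measurable $E$ first exhaust it by bounded pieces, which is harmless because $\Psi\ge0$; and in the final reconstruction the affine integrands may be negative, so the supremum over partitions should be implemented via $\max_{i\le N}$ and monotone convergence, which is legitimate precisely because on the bounded set $E_k$ the composed coefficients are bounded and $u\in L^p(E_k)\subset L^1(E_k)$. With these routine adjustments your proof is correct and self-contained, whereas the paper simply defers to \cite{FL}.
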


\section{Asymptotic analysis of the truncated functionals}

This section is devoted to the asymptotic analysis of functionals \eqref{truncatedfunctionals} for which the interaction kernel $\rho$ is supported on a ball. This analysis, in combination with Lemma \ref{lemma : truncated functionals}, shall allow us to prove the main result in the final section.

We begin with two preliminary lemmas.

\begin{lemma}\label{lemma : diadic} Let $M\in \R^{m\times d}$, $\{u_j\}_j\subset L^p_{\#,M}(Q_1; \R^m)$, $\{\lambda_j\}_j\subset\R$, and $v_j(x):=u_j(x)-Mx, j\in\N$. For every $E\subset \R^d$ measurable bounded set and $r>0$, there exists a positive constant $C$ depending on $d,p, r$, and $E$ such that
    \begin{equation*}
         \int_{E} \int_{Q_1}\Bigl|\frac{v_j(x+\lambda_j\xi)-v_j(x)}{\lambda_j}\Bigr|^p \, dx\,d\xi \leq C \int_{B_{r}} \int_{Q_1}\Bigl|\frac{v_j(x+\lambda_j\xi)-v_j(x)}{\lambda_j}\Bigr|^p \, dx\,d\xi
    \end{equation*}
    for every $j\in\N$. In particular, if we assume that  
    \begin{equation*}
        \sup_{j}  \int_{B_{r}} \int_{Q_1}\Bigl|\frac{u_j(x+\lambda_j\xi)-u_j(x)}{\lambda_j}\Bigr|^p \, dx\,d\xi <+\infty
    \end{equation*}
    for some $r>0$, then
\begin{equation*}
        \sup_{j}  \int_{E} \int_{Q_1}\Bigl|\frac{u_j(x+\lambda_j\xi)-u_j(x)}{\lambda_j}\Bigr|^p \, dx\,d\xi <+\infty.
    \end{equation*}
    \end{lemma}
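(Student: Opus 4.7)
The plan is to exploit the fact that $v_j(x)=u_j(x)-Mx$ is $Q_1$-periodic, so that translations in $x$ leave integrals over $Q_1$ invariant. Although the statement is phrased in terms of $u_j$, I would first reduce to the analogous inequality for $v_j$, since the difference quotient splits as
\begin{equation*}
\frac{v_j(x+\lambda_j\xi)-v_j(x)}{\lambda_j}=\frac{u_j(x+\lambda_j\xi)-u_j(x)}{\lambda_j}-M\xi,
\end{equation*}
and $\int_{B_r}\int_{Q_1}|M\xi|^p\,dx\,d\xi$ is a finite constant. So it suffices to prove the inequality with $u_j$ replaced by $v_j$, and then, for the ``in particular'' part, pass back and forth by the triangle inequality.

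The main step is a dyadic/telescoping argument. Given $E$ bounded, fix $R>0$ with $E\subset B_R$ and set $N:=\lceil 2R/r\rceil$, so that for every $\xi\in E$ the vector $\xi/N$ lies in $B_{r/2}\subset B_r$. Writing
\begin{equation*}
v_j(x+\lambda_j\xi)-v_j(x)=\sum_{k=1}^{N}\Bigl[v_j\Bigl(x+k\tfrac{\lambda_j\xi}{N}\Bigr)-v_j\Bigl(x+(k-1)\tfrac{\lambda_j\xi}{N}\Bigr)\Bigr]
\end{equation*}
and applying the elementary convexity inequality $|a_1+\cdots+a_N|^p\leq N^{p-1}\sum_k|a_k|^p$, I would obtain
\begin{equation*}
\Bigl|\frac{v_j(x+\lambda_j\xi)-v_j(x)}{\lambda_j}\Bigr|^p\leq N^{p-1}\sum_{k=1}^{N}\Bigl|\frac{v_j(x+k\tfrac{\lambda_j\xi}{N})-v_j(x+(k-1)\tfrac{\lambda_j\xi}{N})}{\lambda_j}\Bigr|^p.
\end{equation*}

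Now integrate over $x\in Q_1$. Each summand is of the form $\int_{Q_1}|v_j(x+a_k)-v_j(x+b_k)|^p\,dx$ with $a_k-b_k=\lambda_j\xi/N$, and the $Q_1$-periodicity of $v_j$ allows a translation in $x$ to turn each term into the same quantity $\int_{Q_1}|v_j(x+\lambda_j\xi/N)-v_j(x)|^p\,dx$. This yields
\begin{equation*}
\int_{Q_1}\Bigl|\frac{v_j(x+\lambda_j\xi)-v_j(x)}{\lambda_j}\Bigr|^p dx\leq N^{p}\int_{Q_1}\Bigl|\frac{v_j(x+\lambda_j\xi/N)-v_j(x)}{\lambda_j}\Bigr|^p dx.
\end{equation*}
Integrating in $\xi\in E$ and performing the linear change of variables $\xi'=\xi/N$ (so $d\xi=N^d\,d\xi'$ and the image $E/N$ is contained in $B_{r/2}\subset B_r$), I get the claim with $C=N^{p+d}$, a constant depending only on $d,p,r,E$. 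The ``in particular'' statement then follows by combining this with the triangle-inequality bound relating the $v_j$- and $u_j$-difference quotients.

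I do not expect any serious obstacle: the only point requiring mild care is to ensure that the integer $N$ can be chosen uniformly in $\xi\in E$ (which is why boundedness of $E$ enters) and that the auxiliary step $\xi/N$ genuinely lies in $B_r$, both of which are secured by the choice $N=\lceil 2R/r\rceil$.
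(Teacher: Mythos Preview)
Your argument is correct and follows essentially the same approach as the paper: telescope the difference $v_j(x+\lambda_j\xi)-v_j(x)$ into equal steps, use convexity, exploit the $Q_1$-periodicity of $v_j$ to reduce all summands to a single integral, and then rescale in $\xi$. The only cosmetic difference is that the paper proves the doubling case $E=B_{2r}$ (i.e.\ $N=2$) and iterates dyadically, whereas you choose $N=\lceil 2R/r\rceil$ once and for all; the resulting constants and the logic are the same.
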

\begin{proof}
It suffices to prove the statement in the case $E=B_R$. If $R\leq r$ this is obviously true; therefore, we prove the statement in the case $R=2r$, so that the general case follows by a simple dyadic argument. 

Using the changes of variables $\xi':=\xi/2$ and $x':=x+\lambda_j\xi'$, for all $j\in\N$ we have
    \begin{align*}
\int_{B_{2r}}\int_{Q_1}\Bigl|\frac{v_j(x+\lambda_j\xi)-v_j(x)}{\lambda_j}\Bigr|^p\, dx\, d\xi 
        & = 2^d \int_{B_{r}}\int_{Q_1}\Bigl|\frac{v_j(x+2\lambda_j\xi')-v_j(x)}{\lambda_j}\Bigr|^p\, dx\, d\xi' \\
        & \leq 2^{d+p-1} \Bigl\{ \int_{B_{r}}\int_{Q_1}\Bigl|\frac{v_j(x+2\lambda_j\xi')-v_j(x+\lambda_j\xi')}{\lambda_j}\Bigr|^p\, dx\, d\xi' \\
        & \quad + \int_{B_{r}}\int_{Q_1}\Bigl|\frac{v_j(x+\lambda_j\xi')-v_j(x)}{\lambda_j}\Bigr|^p\, dx\, d\xi' \Bigr\} \\
        & = 2^{d+p-1}\Bigl\{\int_{B_{r}}\int_{Q_1+\lambda_j\xi'}\Bigl|\frac{v_j(x'+\lambda_j\xi')-v_j(x')}{\lambda_j}\Bigr|^p\, dx'\, d\xi' \\
        & \quad + \int_{B_{r}}\int_{Q_1}\Bigl|\frac{v_j(x+\lambda_j\xi')-v_j(x)}{\lambda_j}\Bigr|^p\, dx\, d\xi' \Bigr\}.
    \end{align*}
Since $v_j$ is $Q_1$-periodic, the function
\begin{equation*}
    x'\mapsto \Bigl|\frac{v_j(x'+\lambda_j\xi')-v_j(x')}{\lambda_j}\Bigr|^p
\end{equation*}
is $Q_1$-periodic as well for every $\xi'\in \R^d$, hence,
\begin{equation*}
\int_{Q_1+\lambda_j\xi'}\Bigl|\frac{v_j(x'+\lambda_j\xi')-v_j(x')}{\lambda_j}\Bigr|^p\, dx' = \int_{Q_1}\Bigl|\frac{v_j(x'+\lambda_j\xi')-v_j(x')}{\lambda_j}\Bigr|^p\, dx'
\end{equation*}
    and then 
    \begin{equation*} \int_{B_{2r}}\int_{Q_1}\Bigl|\frac{v_j(x+\lambda_j\xi)-v_j(x)}{\lambda_j}\Bigr|^p\, dx\, d\xi \leq 2^{d+p}\int_{B_{r}}\int_{Q_1}\Bigl|\frac{v_j(x+\lambda_j\xi)-v_j(x)}{\lambda_j}\Bigr|^p\, dx\, d\xi,
    \end{equation*}
    which proves the first part of the statement.

    The second part of the statement is an immediate consequence. 
\end{proof}

The following lemma provides some precompactness results for sequences having equi-bounded energy.

\begin{lemma} \label{lemma : compactness}
    Let $M\in \R^{m\times d}$, $\{u_j\}_j\subset L^p_{\#,M}(Q_1;\R^m)$, $\{\lambda_j\}_j\subset \R^+$ such that $\lambda_j\to \lambda\in[0,+\infty)$ as $j\to+\infty$, and assume there exists $A$ a bounded open subset of $\R^d$ with Lipschitz boundary such that $Q_1\subseteq A$ and
    \begin{equation*}
        \sup_{j} \Bigl\{ \int_{Q_1} u_j\, dx + \int_{B_r}\int_{A}\Bigl|\frac{u_j(x+\lambda_j\xi)-u_j(x)}{\lambda_j}\Bigr|^p\, dx\, d\xi \Bigr\}<+\infty
    \end{equation*}
    for some $r>0$.
    The following hold:
    \begin{itemize}
        \item[(i)] if $\lambda=0$, there exist a subsequence $\{u_{j_k}\}_k$ and a function $u\in W^{1,p}(A;\R^m)$ such that $u_{j_k}\to u$ strongly in $L^p(A;\R^m)$ as $k\to+\infty$;
        \item[(ii)] if $\lambda\in(0,+\infty)$, there exist a subsequence $\{u_{j_k}\}_k$ and a function $u\in L^{p}(A;\R^m)$ such that $u_{j_k}\rightharpoonup u$ weakly in $L^p(A;\R^m)$ as $k\to+\infty$;
    \end{itemize}
    in both cases, it holds that $\sup_j \|u_j\|_{L^p(Q_1;\R^m)}$ is finite.
\end{lemma}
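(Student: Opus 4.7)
The plan rests on three ingredients: a reduction to the periodic setting, a chaining argument that converts small-scale difference-quotient bounds into a full $L^p$ bound over a period, and the invocation of known non-local compactness results.

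First I would write $v_j := u_j - Mx$, so that each $v_j$ is $Q_1$-periodic. From the identity
\begin{equation*}
\frac{u_j(x+\lambda_j\xi)-u_j(x)}{\lambda_j}=\frac{v_j(x+\lambda_j\xi)-v_j(x)}{\lambda_j}+M\xi,
\end{equation*}
the moment bound $\int_{B_r}|\xi|^p\,d\xi<+\infty$, and the triangle inequality in $L^p(B_r\times A)$, the hypothesis transfers to $v_j$:
\begin{equation*}
\sup_j\int_{B_r}\int_A\Bigl|\frac{v_j(x+\lambda_j\xi)-v_j(x)}{\lambda_j}\Bigr|^p\,dx\,d\xi<+\infty.
\end{equation*}
For $\xi$ fixed the integrand is $Q_1$-periodic in $x$, so using $Q_1\subseteq A$ and the boundedness of $A$ this bound is equivalent to the analogous one with $A$ replaced by $Q_1$.

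The main step is to prove $\sup_j\|u_j\|_{L^p(Q_1;\R^m)}<+\infty$, which by the mean bound on $\int_{Q_1}u_j$ amounts to bounding $\|v_j\|_{L^p(Q_1)}$. For this I would use a chaining argument in the spirit of Lemma~\ref{lemma : diadic}: for any integer $N\geq 1$ and any $\xi\in B_r$, write the telescopic sum
\begin{equation*}
v_j(x+N\lambda_j\xi)-v_j(x)=\sum_{i=0}^{N-1}\bigl(v_j(x+(i+1)\lambda_j\xi)-v_j(x+i\lambda_j\xi)\bigr),
\end{equation*}
apply the discrete Jensen inequality, and then use the $Q_1$-periodicity of $v_j$ to recognize that each summand integrated over $Q_1$ equals $\int_{Q_1}|v_j(y+\lambda_j\xi)-v_j(y)|^p\,dy$. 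Integrating in $\xi\in B_r$ and performing the change of variable $\eta:=N\lambda_j\xi$, one obtains, with $R:=N\lambda_j r$,
\begin{equation*}
\int_{B_R}\int_{Q_1}|v_j(x+\eta)-v_j(x)|^p\,dx\,d\eta\leq C\,(R/r)^{p+d},
\end{equation*}
for a constant $C$ independent of $j$. Since $\{\lambda_j\}_j$ is bounded (as $\lambda<+\infty$), a suitable choice $N\sim\sqrt{d}/(\lambda_j r)$ (or $N=1$ when $\lambda_j r\geq\sqrt{d}$) makes $R$ uniformly comparable to $\sqrt{d}$, so that $B_R\supseteq Q_1-Q_1$ and consequently
\begin{equation*}
\sup_j\int_{Q_1}\int_{Q_1}|v_j(x)-v_j(y)|^p\,dx\,dy<+\infty.
\end{equation*}
Setting $\overline v_j:=\tfrac{1}{|Q_1|}\int_{Q_1}v_j$, Jensen's inequality provides the Poincar\'e--Wirtinger-type bound
\begin{equation*}
\int_{Q_1}|v_j-\overline v_j|^p\,dx\leq\frac{1}{|Q_1|}\int_{Q_1}\int_{Q_1}|v_j(x)-v_j(y)|^p\,dx\,dy,
\end{equation*}
which combined with the hypothesis on $\int_{Q_1}u_j$ yields the uniform $L^p(Q_1)$ bound. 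Extending by periodicity of $v_j$ and using the boundedness of $A$, one finally obtains $\sup_j\|u_j\|_{L^p(A;\R^m)}<+\infty$.

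Given this bound, case $(ii)$ with $\lambda\in(0,+\infty)$ is immediate by weak compactness of bounded sequences in $L^p(A;\R^m)$. In case $(i)$ with $\lambda=0$, I would invoke the non-local compactness theorem of Ponce \cite[Theorem 1.2]{P2} (or its convolution-type reformulation in \cite{AABPT}): the rescaled difference-quotient bound, combined with the uniform $L^p$ bound and the concentration at the origin of the supports $B_{r\lambda_j}$, implies that $\{u_j\}_j$ is strongly precompact in $L^p(A;\R^m)$ and that every cluster point belongs to $W^{1,p}(A;\R^m)$. I expect the main obstacle to be the chaining estimate, where the $Q_1$-periodicity of $v_j$ has to be used carefully to absorb the composed translations back into $Q_1$ and to keep the constant $C$ independent of $j$ uniformly in the regime $\lambda_j\to\lambda$.
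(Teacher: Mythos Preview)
Your proposal is correct and follows essentially the same route as the paper: the chaining/telescoping argument you describe is precisely the content of Lemma~\ref{lemma : diadic} (applied in the paper's proof of $(ii)$ with $E=Q_{2/\lambda}$ followed by the change of variable $\xi'=\lambda_j\xi$), the Poincar\'e--Wirtinger step via Jensen matches the paper's computation, and your invocation of the Ponce/AABPT compactness theorem for $(i)$ is exactly the paper's citation of \cite[Corollary 4.2]{AABPT}. The only cosmetic difference is that you treat the $L^p$ bound uniformly for both regimes before splitting into cases, whereas the paper dispatches $(i)$ directly by citation and works out the bound only for $(ii)$.
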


\begin{proof}
    The statement $(i)$ readily follows by \cite[Corollary $4.2$]{AABPT}.

    As for the proof of $(ii)$, we observe that, by the translation invariance of the functional, it is not restrictive to suppose that
    \begin{equation}\label{mean value}
        \int_{Q_1} u_j\,dx=0
    \end{equation}
    and that $\lambda/2\leq\lambda_j\leq 2\lambda$ for every $j\in\N$; moreover, since $\{u_j\}_j\subset L^p_{\#,M}(Q_1;\R^m)$, we may further assume that $A=Q_1$.

Using Lemma \ref{lemma : diadic} with $E=Q_{2/\lambda}$, we get 
\begin{equation*}
    S:=\sup_j\int_{Q_{2/\lambda}} \int_{Q_1} \Bigl|\frac{u_j(x+\lambda_j\xi)-u_j(x)}{\lambda_j}\Bigr|^p\, dx\,d\xi <+\infty;
\end{equation*}
hence, applying the change of variables $\xi':=\lambda_j\xi$, we obtain
\begin{align} \nonumber
    (2\lambda)^pS & \geq \sup_j \int_{Q_{2/\lambda}} \int_{Q_1} |u_j(x+\lambda_j\xi)-u_j(x)|^p\, dx\,d\xi \\ \nonumber
    & = \sup_j \lambda_j^{-d}\int_{Q_{\frac{2\lambda_j}{\lambda}}} \int_{Q_1} |u_j(x+\xi')-u_j(x)|^p\, dx\,d\xi' \\ \label{criticalbound}
    & \geq (2\lambda)^{-d} \sup_j \int_{Q_1} \int_{Q_1} |u_j(x+\xi')-u_j(x)|^p\, dx\,d\xi'.
\end{align}
Resorting to \eqref{mean value} and to the fact that $u_j(x)-Mx$ is $Q_1$-periodic, we apply Jensen's inequality to infer
\begin{align*}
    \int_{Q_1} |u_j(x)|^p\,dx & = \int_{Q_1} \Bigl|u_j(x)-\int_{Q_1}u_j(\xi)\, d\xi\Bigr|^p\,dx \\
    & = \int_{Q_1} \Bigl|\int_{Q_1}u_j(x)-u_j(x+\xi)+u_j(x+\xi)-u_j(\xi)\, d\xi\Bigr|^p\,dx \\
& = \int_{Q_1} \Bigl|\int_{Q_1}u_j(x)-u_j(x+\xi)+Mx\, d\xi\Bigr|^p\,dx \\
& \leq 2^{p-1} \Bigl\{\int_{Q_1}\int_{Q_1}|u_j(x+\xi)-u_j(x)|^p\, dx\, d\xi + d^{\frac{p}{2}}|M|^p\Bigr\},
\end{align*}
which, combined with \eqref{criticalbound}, leads to
\begin{equation*}
    \sup_j \int_{Q_1} |u_j(x)|^p\,dx \leq 2^{p-1}\{(2\lambda)^{d+p}S+d^{\frac{p}{2}}|M|^p\}<+\infty,
\end{equation*}
that implies the thesis.
\end{proof}

For the remaining part of this section, we let $\{\ej\}_j$ and $\{\dej\}_j$ be positive sequences such that $\ej\to 0^+$ and $\dej\to0^+$ as $j\to+\infty$, we set
\begin{equation*}
    \lambda_j:=\frac{\ej}{\dej}, \quad j\in\N,
\end{equation*}
and we assume there exists
\begin{equation*}
\lambda:=\lim_{j\to+\infty}\lambda_j\in[0,+\infty].
\end{equation*}
We let $T$ be fixed so that
\begin{equation*}
    T>r_0 \quad \text{ and } \quad Q_1\subset B_T,
\end{equation*}
with $r_0$ the positive constant appearing in \eqref{rho : ball}; both these conditions are not restrictive since, eventually, we will let $T\to+\infty$.

We consider the truncated functionals 
\begin{equation*}
 F^T_j(u,A)  := \int_{B_T} \rho(\xi)\int_{A_{\ej}(\xi)} f\Bigl(\frac{x}{\dej},\frac{x+\ej\xi}{\dej},\frac{u(x+\ej\xi)-u(x)}{\ej}\Bigr)\, dx\,d\xi   
\end{equation*}
 for $u\in L^p(\Omega; \R^m)$ and $A\in \A_{\rm reg}(\Omega)$, and we recall that the kernel $\rho$ satisfies the assumptions \eqref{rho : ball} and \eqref{rho : moment}, and that the density $f$ satisfies the assumptions \eqref{f : period}, \eqref{f : convex}, and \eqref{f : growth}. We prove that if the $\Gamma$-limit of the truncated functionals $\{F_j^T\}_j$ exists and equals an integral functional, then its energy density $\f$ can be characterized through an `asymptotic formula' and, when $\lambda$ is finite, also by means of certain `non-local cell-problem formulas' involving the parameters $\{\lambda_j\}_j$. 

\begin{proposition}\label{prop : cell formula}
    Assume there exists a quasiconvex function $\varphi: \R^{m\times d}\to[0,+\infty)$ such that 
\begin{equation*}
     \Gamma(L^p)\text{-}\lim_{j\to+\infty} F^T_j(u,A) = \begin{cases}
            \displaystyle \int_{A} \f(\nabla u)\,dx & \text{ if } u\in W^{1,p}(A;\R^m), \\
            +\infty & \text{ if } u\in L^p(\Omega;\R^m)\setminus W^{1,p}(A;\R^m)
        \end{cases}
\end{equation*}
for every $A\in\A_{\rm reg}(\Omega)$. For every $M\in\R^{m\times d}$ the following hold:
\begin{itemize}
    \item[(i)]  for every $\lambda\in[0,+\infty]$ we have
\end{itemize}\begin{align*}
    \f(M) & = \lim_{j\to+\infty} \inf\Bigl\{ \Bigl(\frac{\dej}{r}\Bigr)^d\int_{B_T} \rho(\xi)\int_{(Q_{r/\dej})_{\lambda_j}(\xi)} f\Bigl(x, x+\lambda_j\xi, \frac{u(x+\lambda_j\xi)-u(x)}{\lambda_j}\Bigr)\, dx\,d\xi : \\
    & \qquad \qquad \qquad u\in \mathcal{D}_{T\lambda_j,M}(Q_{r/\dej};\R^m) \Bigr\}
\end{align*}
for some $r>0$;
\item[(ii)] for every $\lambda\in[0,+\infty]$ we have 
\begin{equation*}
    \f(M) \geq \limsup_{j\to+\infty} \inf\Bigl\{ \int_{B_T} \rho(\xi)\int_{Q_1} f\Bigl(x,x+\lambda_j\xi,\frac{u(x+\lambda_j\xi)-u(x)}{\lambda_j}\Bigr)\, dx\,d\xi : u\in L^p_{\#,M}(Q_1;\R^m) \Bigr\};
\end{equation*}
\item[(iii)] for every $\lambda\in[0,+\infty)$ we have
\begin{equation*}
    \f(M) = \lim_{j\to+\infty} \inf\Bigl\{ \int_{B_T} \rho(\xi)\int_{Q_1} f\Bigl(x,x+\lambda_j\xi,\frac{u(x+\lambda_j\xi)-u(x)}{\lambda_j}\Bigr)\, dx\,d\xi : u\in L^p_{\#,M}(Q_1;\R^m) \Bigr\}.
\end{equation*}
\end{proposition}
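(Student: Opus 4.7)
For part (i), I would apply Proposition~\ref{prop : boundary} to $A = Q_r$ for some $r > 0$ with $Q_r \subset \Omega$. Quasiconvexity of $\f$ combined with the affine boundary datum $Mx$ gives $\inf\{F(u,Q_r) : u - Mx \in W^{1,p}_0(Q_r;\R^m)\} = \f(M)\, r^d$, so the Dirichlet infima of $F^T_j(\cdot,Q_r)$ over $\mathcal{D}_{T\ej,M}(Q_r;\R^m)$ converge to $\f(M)\, r^d$. Then the change of variables $y := x/\dej$, $v(y) := u(\dej y)/\dej$ rescales $Q_r$ to $Q_{r/\dej}$, maps $\mathcal{D}_{T\ej,M}(Q_r;\R^m)$ into $\mathcal{D}_{T\lambda_j,M}(Q_{r/\dej};\R^m)$, produces a Jacobian $\dej^d$, and replaces $(u(x+\ej\xi) - u(x))/\ej$ by $(v(y+\lambda_j\xi) - v(y))/\lambda_j$; dividing by $r^d$ yields the formula of (i).

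For the inequality $\f(M) \le \liminf_j I_j(M)$ needed to complete (iii) (writing $I_j(M)$ for the cell infimum in (ii)--(iii), with $\lambda \in [0,+\infty)$), I would pick an almost-optimal $u_j^{\mathrm{cell}} \in L^p_{\#,M}(Q_1;\R^m)$ and normalize it by translation so that $\int_{Q_1} u_j^{\mathrm{cell}}\,dx = 0$. Lemma~\ref{lemma : compactness} then provides a uniform $L^p(Q_1)$-bound on $u_j^{\mathrm{cell}}$, and hence on $P_j := u_j^{\mathrm{cell}} - My$. Setting $u_j(x) := \dej\, u_j^{\mathrm{cell}}(x/\dej)$ on $\Omega$ yields $u_j - Mx = \dej\, P_j(x/\dej)$, so $\|u_j - Mx\|_{L^p(Q_1)} \to 0$. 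After the change of variables $y = x/\dej$ inside $F^T_j(u_j,Q_1)$, the integrand is $Q_1$-periodic in $y$ by \eqref{f : period} and the periodicity of $P_j$; a Riemann-sum estimate using $(\dej \lfloor 1/\dej\rfloor)^d \to 1$ gives $F^T_j(u_j,Q_1) = I_j(M) + o(1)$, and the $\Gamma$-liminf inequality at $u = Mx$ yields $\f(M) = F(Mx,Q_1) \le \liminf_j F^T_j(u_j,Q_1) = \liminf_j I_j(M)$.

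The main work lies in (ii), which must hold for every $\lambda \in [0,+\infty]$. Fixing $s_* > 0$ and setting $K_j := \lfloor s_*/\dej \rfloor \in \N$, $r_j := K_j \dej \to s_*$, I would apply (i) with $r = r_j$ (a variant that follows from Proposition~\ref{prop : boundary} at $A = Q_{s_*}$ through a standard continuity step) to select an almost-optimal $v_j \in \mathcal{D}_{T\lambda_j,M}(Q_{K_j};\R^m)$. Since $v_j = My$ on a $T\lambda_j$-neighbourhood of $\partial Q_{K_j}$ and $K_j$ is an integer, the $Q_{K_j}$-periodic extension of $v_j$ modulo $My$ is continuous, and I may form the average
\begin{equation*}
W_j(x) := \frac{1}{K_j^d} \sum_{z \in \{0, 1, \dots, K_j - 1\}^d} \bigl( v_j(x + z) - Mz \bigr).
\end{equation*}
A telescoping argument exploiting the $Q_{K_j}$-periodicity of $v_j - My$ shows that $W_j - Mx$ is $Q_1$-periodic, so $W_j \in L^p_{\#,M}(Q_1;\R^m)$. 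By convexity \eqref{f : convex}, the cell energy of $W_j$ is bounded by $K_j^{-d} \sum_z E_{\mathrm{cell}}(v_j^z)$ with $v_j^z := v_j(\cdot + z) - Mz$, and the $Q_1$-periodicity \eqref{f : period} of $f$ together with the substitution $y = x + z$ recombines these into $K_j^{-d} \int_{B_T}\rho(\xi)\int_{Q_{K_j}} f(y, y + \lambda_j\xi, (v_j(y + \lambda_j\xi) - v_j(y))/\lambda_j)\, dy\, d\xi$. The discrepancy with the Dirichlet energy $E_{\mathrm{Dir}}(v_j)$ (which integrates $y$ over $(Q_{K_j})_{\lambda_j}(\xi)$) is supported on a $T\lambda_j$-thick boundary layer on which the Dirichlet datum forces the difference quotient to equal $M\xi$; this correction is $O(T\lambda_j K_j^{d-1})$, hence of order $\ej/s_j \to 0$ after division by $K_j^d$, uniformly in $\lambda$. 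Combined with (i), $K_j^{-d} E_{\mathrm{Dir}}(v_j) \to \f(M)$, giving $I_j(M) \le E_{\mathrm{cell}}(W_j) \le \f(M) + o(1)$.

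The principal obstacle is the mild mismatch between the fixed parameter $r$ in the statement of (i) and the integer side length of the rescaled cube needed so that the averaging above produces a $Q_1$-periodic output; this is settled by the continuity argument extending (i) to $r_j \to s_*$. The uniform control of the boundary-layer correction across the three regimes $\lambda \in \{0\} \cup (0,+\infty) \cup \{+\infty\}$ rests on the identity $\lambda_j/K_j = \ej/s_j$, which is infinitesimal in every case.
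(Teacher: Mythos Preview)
Your argument is correct and follows the same strategy as the paper: part (i) by quasiconvexity and Proposition~\ref{prop : boundary} plus the rescaling $x\mapsto x/\dej$; part (ii) by averaging a Dirichlet competitor over integer translates to manufacture a $Q_1$-periodic test function, with the boundary-layer correction of order $\lambda_j/K_j=\ej/r_j\to 0$; and the $\leq$ half of (iii) by rescaling a near-optimal cell competitor back to $\Omega$ and invoking the $\Gamma$-liminf inequality at $Mx$, with Lemma~\ref{lemma : compactness} supplying the uniform $L^p(Q_1)$-bound needed for $u_j\to Mx$.

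The one technical point where the paper proceeds more directly is exactly what you flag as the ``principal obstacle''. Instead of letting the side $r_j=\lfloor s_*/\dej\rfloor\dej$ vary with $j$ and invoking a continuity extension of Proposition~\ref{prop : boundary} to moving cubes, the paper keeps $r$ fixed, takes an almost-optimal $u\in\mathcal D_{T\lambda_j,M}(Q_{r/\dej};\R^m)$ furnished by (i), and simply \emph{extends} it by the affine map $Mx$ to the integer-side cube $Q_{\lceil r/\dej\rceil}$. Because the extension is affine on the added shell, its cost is bounded by
\[
\theta_j \;=\; \Bigl(\frac{\dej}{r}\Bigr)^d\bigl|Q_{\lceil r/\dej\rceil}\setminus Q_{r/\dej-T\lambda_j}\bigr|\,\beta|M|^p\!\int_{B_T}\rho(\xi)|\xi|^p\,d\xi \;\longrightarrow\; 0,
\]
and the averaging can then be performed over $\mathbb Z^d\cap[0,\lceil r/\dej\rceil)^d$ without any further adjustment. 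This makes (ii) fully self-contained and avoids the auxiliary continuity step altogether; your route works too, but the paper's extension trick is the cleaner bookkeeping device.
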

\begin{proof}
Fix $M\in \R^{m\times d}$ and let $Q$ be a cube of side-length $r$ which is contained in $\Omega$. Using the quasiconvexity of $\f$ and the convergence of boundary-value problems established in Proposition \ref{prop : boundary} with $A=Q$ and $s=T$, we have
    \begin{align*}
      \f(M) & = \inf \Bigl\{ \frac{1}{r^d}\int_{Q} \f(\nabla u) \, dx: u-Mx\in W^{1,p}_0(Q; \R^m) \Bigr\} \\ 
      & = \lim_{j\to+\infty} \inf\Bigl\{ \frac{1}{r^d}F^T_j(u, Q) : u\in \mathcal{D}_{T\ej,M}(Q; \R^m) \Bigr\}.
    \end{align*}
For the sake of exposition, we assume that $Q=Q_r=(0,r)^d$, the general case being analogous. Consider $u\in \mathcal{D}_{T\ej,M}(Q_r;\R^m)$, and set $v(x):=u(\dej x)/\dej$. We have that $v\in \mathcal{D}_{T\lambda_j,M}(Q_{r/\dej}; \R^m)$ and, by a change of variables, 
\begin{align*}
    F_j^T(u, Q_r) & = \int_{B_T} \rho(\xi)\int_{(Q_r)_{\ej}(\xi)} f\Bigl(\frac{x}{\dej},\frac{x+\ej\xi}{\dej},\frac{u(x+\ej\xi)-u(x)}{\ej}\Bigr)\, dx\,d\xi \\
    & = \int_{B_T} \rho(\xi)\int_{(Q_r)_{\ej}(\xi)} f\Bigl(\frac{x}{\dej},\frac{x+\ej\xi}{\dej},\frac{v(\tfrac{x}{\dej}+\tfrac{\ej}{\dej}\xi)-v(\tfrac{x}{\dej})}{\ej/\dej}\Bigr)\, dx\,d\xi \\
    & = \dej^d\int_{B_T} \rho(\xi)\int_{(Q_{r/\dej})_{\lambda_j}(\xi)} f\Bigl(x, x+\lambda_j\xi, \frac{v(x+\lambda_j\xi)-v(x)}{\lambda_j}\Bigr)\, dx\,d\xi.
\end{align*}
By the arbitrariness of $u$, we obtain that 
\begin{align*}
    \f(M) & = \lim_{j\to+\infty} \inf\Bigl\{ \Bigl(\frac{\dej}{r}\Bigr)^d\int_{B_T} \rho(\xi)\int_{(Q_{r/\dej})_{\lambda_j}(\xi)} f\Bigl(x, x+\lambda_j\xi, \frac{u(x+\lambda_j\xi)-u(x)}{\lambda_j}\Bigr)\, dx\,d\xi : \\
    & \qquad \qquad \qquad u\in \mathcal{D}_{T\lambda_j,M}(Q_{r/\dej}; \R^m) \Bigr\},
\end{align*}
which proves $(i)$.

\smallskip

Now we let
\begin{equation*}
    \f_j(M):=\inf\Bigl\{ \int_{B_T} \rho(\xi)\int_{Q_1} f\Bigl(x,x+\lambda_j\xi,\frac{u(x+\lambda_j\xi)-u(x)}{\lambda_j}\Bigr)\, dx\,d\xi : u\in L^p_{\#,M}(Q_1; \R^m) \Bigr\}
\end{equation*}
and prove that
\begin{equation}\label{cell lemma}
    \f(M)\geq \limsup_{j\to+\infty} \f_j(M).
\end{equation}

Given $u\in \mathcal{D}_{T\lambda_j,M}(Q_{r/\dej}; \R^m)$, we define
\begin{equation*}
    \widetilde{u}(x):=
    \begin{cases}
        u(x) & \text{ if } x\in Q_{r/\dej}, \\
        Mx & \text{ if } x\in Q_{\lceil r/\dej\rceil}\setminus Q_{r/\dej},
    \end{cases}
\end{equation*}
we note that $\widetilde{u}\in \mathcal{D}_{T\lambda_j,M}(Q_{\lceil r/\dej\rceil}; \R^m)$, and we write
\begin{align*}
    &\Bigl(\frac{\dej}{r}\Bigr)^d\int_{B_T} \rho(\xi)\int_{(Q_{\lceil r/\dej \rceil })_{\lambda_j}(\xi)} f\Bigl(x, x+\lambda_j\xi, \frac{\widetilde{u}(x+\lambda_j\xi)-\widetilde{u}(x)}{\lambda_j}\Bigr)\, dx\,d\xi \\
    & = \Bigl(\frac{\dej}{r}\Bigr)^d\int_{B_T} \rho(\xi)\int_{(Q_{r/\dej})_{\lambda_j}(\xi)} f\Bigl(x, x+\lambda_j\xi, \frac{u(x+\lambda_j\xi)-u(x)}{\lambda_j}\Bigr)\, dx\,d\xi \\
    & \quad + \Bigl(\frac{\dej}{r}\Bigr)^d\int_{B_T} \rho(\xi)\int_{(Q_{\lceil r/\dej \rceil })_{\lambda_j}(\xi) \setminus (Q_{r/\dej})_{\lambda_j}(\xi)} f\Bigl(x, x+\lambda_j\xi, \frac{\widetilde{u}(x+\lambda_j\xi)-\widetilde{u}(x)}{\lambda_j}\Bigr)\, dx\,d\xi.
\end{align*}
If $\xi\in B_T$ and $x\in (Q_{\lceil r/\dej \rceil })_{\lambda_j}(\xi) \setminus (Q_{r/\dej})_{\lambda_j}(\xi)$, then $\widetilde{u}(x+\lambda_j\xi)-\widetilde{u}(x)=\lambda_j M\xi$, hence, using the upper bound in \eqref{f : growth} we get
\begin{align} \nonumber
 & \Bigl(\frac{\dej}{r}\Bigr)^d\int_{B_T} \rho(\xi)\int_{(Q_{\lceil r/\dej \rceil })_{\lambda_j}(\xi) \setminus (Q_{r/\dej})_{\lambda_j}(\xi)} f\Bigl(x, x+\lambda_j\xi, \frac{\widetilde{u}(x+\lambda_j\xi)-\widetilde{u}(x)}{\lambda_j}\Bigr)\, dx\,d\xi \\ \nonumber
    & = \Bigl(\frac{\dej}{r}\Bigr)^d\int_{B_T} \rho(\xi)\int_{(Q_{\lceil r/\dej \rceil })_{\lambda_j}(\xi) \setminus (Q_{r/\dej})_{\lambda_j}(\xi)} f(x, x+\lambda_j\xi, M\xi)\, dx\,d\xi \\ \nonumber
    & \leq  \Bigl(\frac{\dej}{r}\Bigr)^d|E_j|\beta|M|^p\int_{B_T} \rho(\xi)|\xi|^p \, d\xi \\ \nonumber
    & =: \theta_j,
\end{align}
where we have set
\begin{equation}\label{set}
    E_j:=Q_{\lceil r/\dej \rceil } \setminus Q_{r/\dej-T\lambda_j}, \quad j\in\N,
\end{equation}
and we have used that $(Q_{\lceil r/\dej \rceil })_{\lambda_j}(\xi) \setminus (Q_{r/\dej})_{\lambda_j}(\xi)\subset E_j$ for all $\xi\in B_T$. Recalling \eqref{rho : moment} we infer there exists a positive constant $C$, independent of $j$, such that
\begin{equation*}
 \theta_j  = C \Bigl(\frac{\dej}{r}\Bigr)^d \Bigl( \Bigl\lceil \frac{r}{\dej} \Bigr\rceil^d - \Bigl(\frac{r}{\dej}-T\lambda_j\Bigr)^d \Bigr)  = C\Bigl[\Bigl( \frac{\lceil {r/\dej} \rceil}{r/\dej}\Bigr)^d - \Bigl(1-\frac{T}{r}\ej\Bigr)^d \Bigr],
\end{equation*}
which tends to $0$ as $j\to+\infty$. As a consequence of these observations, we get that
\begin{align*}
     &\Bigl(\frac{\dej}{r}\Bigr)^d\int_{B_T} \rho(\xi)\int_{(Q_{r/\dej})_{\lambda_j}(\xi)} f\Bigl(x,x+\lambda_j\xi,\frac{u(x+\lambda_j\xi)-u(x)}{\lambda_j}\Bigr)\, dx\,d\xi \\
     & \geq \Bigl(\frac{\dej}{r}\Bigr)^d\int_{B_T} \rho(\xi)\int_{(Q_{\lceil r/\dej \rceil })_{\lambda_j}(\xi)} f\Bigl(x,x+\lambda_j\xi,\frac{\widetilde{u}(x+\lambda_j\xi)-\widetilde{u}(x)}{\lambda_j}\Bigr)\, dx\,d\xi -\theta_j,    
\end{align*}
which, by the arbitrariness of $u$, yields
\begin{align*}
    \f(M) & = \lim_{j\to+\infty} \inf\Bigl\{ \Bigl(\frac{\dej}{r}\Bigr)^d\int_{B_T} \rho(\xi)\int_{(Q_{r/\dej})_{\lambda_j}(\xi)} f\Bigl(x,x+\lambda_j\xi,\frac{u(x+\lambda_j\xi)-u(x)}{\lambda_j}\Bigr)\, dx\,d\xi : \\
    & \qquad \qquad \qquad  u\in \mathcal{D}_{T\lambda_j,M}(Q_{r/\dej}; \R^m) \Bigr\} \\
    & \geq \limsup_{j\to+\infty} \inf\Bigl\{\Bigl(\frac{\dej}{r}\Bigr)^d\int_{B_T} \rho(\xi)\int_{(Q_{\lceil r/\dej\rceil})_{\lambda_j}(\xi)} f\Bigl(x,x+\lambda_j\xi,\frac{u(x+\lambda_j\xi)-u(x)}{\lambda_j}\Bigr)\, dx\,d\xi : \\
    & \qquad \qquad \qquad \quad u\in \mathcal{D}_{T\lambda_j,M}(Q_{\lceil r/\dej\rceil}; \R^m) \Bigr\}.
\end{align*}

Consider now $u\in \mathcal{D}_{T\lambda_j,M}(Q_{\lceil r/\dej\rceil};\R^m)$, let $\widetilde{w}$ be the $Q_{\lceil r/\dej\rceil}$-periodic extension of $u(x)-Mx$, let $\widetilde{u}(x):=\widetilde{w}(x)+Mx$, and define the function 
\begin{equation*}
    w(x):=\frac{1}{\lceil r/\dej \rceil^d} \sum_{i\in \mathbb{Z}^d\cap [0,\lceil r/\dej \rceil)^d} \widetilde{u}(x+i).
\end{equation*}
Since $w\in L^p_{\#,M}(Q_1; \R^m)$, by the assumptions \eqref{f : convex} and \eqref{f : period} we obtain
\begin{align} \nonumber
    \f_j(M) & \leq \int_{B_T} \rho(\xi)\int_{Q_1} f\Bigl(x,x+\lambda_j\xi,\frac{w(x+\lambda_j\xi)-w(x)}{\lambda_j}\Bigr)\, dx\,d\xi \\ \nonumber
    & \leq \frac{1}{\lceil r/\dej \rceil^d} \sum_{i\in \mathbb{Z}^d\cap [0,\lceil r/\dej \rceil)^d}  \int_{B_T} \rho(\xi)\int_{Q_1} f\Bigl(x, x+\lambda_j\xi, \frac{\widetilde{u}(x+i+\lambda_j\xi)-\widetilde{u}(x+i)}{\lambda_j}\Bigr)\, dx\,d\xi \\ \nonumber
    & = \frac{1}{\lceil r/\dej \rceil^d} \sum_{i\in \mathbb{Z}^d\cap [0,\lceil r/\dej \rceil)^d}  \int_{B_T} \rho(\xi)\int_{i+Q_1} f\Bigl(x, x+\lambda_j\xi, \frac{\widetilde{u}(x+\lambda_j\xi)-\widetilde{u}(x)}{\lambda_j}\Bigr)\, dx\,d\xi \\ \label{cell 1}
    & = \frac{1}{\lceil r/\dej \rceil^d} \int_{B_T} \rho(\xi)\int_{Q_{\lceil r/\dej\rceil}} f\Bigl(x,x+\lambda_j\xi,\frac{u(x+\lambda_j\xi)-u(x)}{\lambda_j}\Bigr)\, dx\,d\xi.
\end{align}
Moreover, observing that $Q_{\lceil r/\dej\rceil}\setminus (Q_{\lceil r/\dej\rceil})_{\lambda_j}(\xi)\subset E_j$ for all $\xi\in B_T$ and using \eqref{f : growth}, we get 
\begin{align} \nonumber
    & \frac{1}{\lceil r/\dej \rceil^d} \int_{B_T} \rho(\xi)\int_{Q_{\lceil r/\dej\rceil}\setminus (Q_{\lceil r/\dej\rceil})_{\lambda_j}(\xi)} f\Bigl(x, x+\lambda_j\xi,\frac{u(x+\lambda_j\xi)-u(x)}{\lambda_j}\Bigr)\, dx\,d\xi \\ \nonumber
    & \leq \frac{1}{\lceil r/\dej \rceil^d} |E_j| \beta |M|^p\int_{B_T} \rho(\xi)|\xi|^p\, d\xi \\ \label{cell 2}
    & = \frac{(r/\dej)^d}{\lceil r/\dej \rceil^d}  \theta_j.
 \end{align}
Therefore, combining \eqref{cell 1} with \eqref{cell 2}, we obtain
\begin{align*}
  \Bigl(\frac{\dej}{r}\Bigr)^d\int_{B_T} \rho(\xi)\int_{(Q_{\lceil r/\dej\rceil})_{\lambda_j}(\xi)} f\Bigl(x,x+\lambda_j\xi,\frac{u(x+\lambda_j\xi)-u(x)}{\lambda_j}\Bigr)\, dx\,d\xi \geq \Bigl(\frac{\lceil r/\dej \rceil}{r/\dej}\Bigr)^d\f_j(M)-\theta_j;
\end{align*}
hence, taking into account the arbitrariness of $u$ and that $\theta_j\to0$, we infer \eqref{cell lemma} passing to the limit, concluding the proof of $(ii)$.

\smallskip

Finally, we assume that  $\lambda\in[0,+\infty)$ and  we prove that
\begin{equation}\label{cell lemma bis}
      \f(M) \leq \liminf_{j\to+\infty} \f_j(M).
\end{equation}
Let $\{u_j\}_j\subset L^p_{\#,M}(Q_1;\R^m)$ be such that 
\begin{equation*}
    \liminf_{j\to+\infty} \f_j(M) = \liminf_{j\to+\infty} \int_{B_T} \rho(\xi)\int_{Q_1} f\Bigl(x,x+\lambda_j\xi,\frac{u_j(x+\lambda_j\xi)-u_j(x)}{\lambda_j}\Bigr)\, dx\,d\xi, 
\end{equation*}
and, by the translation invariance of the functional, suppose that
\begin{equation*}
    \int_{Q_1}u_j\,dx=0
\end{equation*}
for every $j\in\N$. Since $T>r_0$, by \eqref{rho : ball}, the lower bound in \eqref{f : growth}, and the previous step, we have 
\begin{equation*}
\f(M)\geq \alpha c_0\liminf_{j\to+\infty} \int_{B_{r_0}} \int_{Q_1}\Bigl|\frac{u_j(x+\lambda_j\xi)-u_j(x)}{\lambda_j}\Bigr|^p \, dx\,d\xi;
\end{equation*}
therefore, upon extracting a not relabeled subsequence,  we apply Lemma \ref{lemma : compactness} with $r=r_0$ and $A=Q_1$ to obtain that $\sup_{j} \|u_j\|_{L^p(Q_1;\R^m)}<+\infty$, and then
\begin{equation}\label{uniform bound}
 \sup_{j} \|u_j-Mx\|_{L^p(Q_1;\R^m)}<+\infty.
\end{equation}
Now we set $w_j(x):=\dej u_j(x/\dej), j\in \N$, and note that $w_j\to Mx$ in $L^p(Q_1;\R^m)$ as $j\to+\infty$. To see this, we also let $v_j(x):=u_j(x)-Mx, j\in\N$, and observe that, since $v_j$ is $Q_1$-periodic, \begin{align*}
    \int_{Q_1}|w_j(x)-Mx|^p\,dx & = \int_{Q_1}\Bigl|\dej v_j\Bigl(\frac{x}{\dej}\Bigr)\Bigr|^p\,dx \\
    & = \dej^{d+p}\int_{Q_{1/\dej}} |v_j(x)|^p\, dx \\
    & \leq \frac{\lceil1/\dej\rceil^d} {(1/\dej)^d}\dej^p\int_{Q_1}|v_j(x)|^p\,dx,
\end{align*}
which tends to $0$ by \eqref{uniform bound} and the fact that $\dej\to0$. Using the definition of $\Gamma$-limit and a change of variables, we obtain
\begin{align*}
  \f(M) = \int_{Q_1} \f(M)\, dx & \leq \liminf_{j\to+\infty} F^T_j(w_j, Q_1) \\
    & = \liminf_{j\to+\infty} \int_{B_T} \rho(\xi)\int_{{(Q_1)}_{\ej}(\xi)} f\Bigl(\frac{x}{\dej},\frac{x+\ej\xi}{\dej},\frac{w_j(x+\ej\xi)-w_j(x)}{\ej}\Bigr)\, dx\,d\xi \\
    & = \liminf_{j\to+\infty} \dej^d \int_{B_T} \rho(\xi)\int_{(Q_{1/\dej})_{\lambda_j}(\xi)} f\Bigl(x,x+\lambda_j\xi,\frac{u_j(x+\lambda_j\xi)-u_j(x)}{\lambda_j}\Bigr)\, dx\,d\xi \\
    & \leq \liminf_{j\to+\infty} \dej^d \int_{B_T} \rho(\xi)\int_{Q_{\lceil 1/\dej \rceil}} f\Bigl(x,x+\lambda_j\xi,\frac{u_j(x+\lambda_j\xi)-u_j(x)}{\lambda_j}\Bigr)\, dx\,d\xi \\
    & =  \liminf_{j\to+\infty} \int_{B_T} \rho(\xi)\int_{Q_1} f\Bigl(x,x+\lambda_j\xi,\frac{u_j(x+\lambda_j\xi)-u_j(x)}{\lambda_j}\Bigr)\, dx\,d\xi \\
    & = \liminf_{j\to+\infty}\f_j(M),
     \end{align*}
where we also used \eqref{f : period} and the $Q_1$-periodicity of the functions
\begin{equation*}
    x\mapsto \frac{u_j(x+\lambda_j\xi)-u_j(x)}{\lambda_j}, \quad j\in\N.
\end{equation*}
This leads to \eqref{cell lemma bis} and proves $(iii)$.
\end{proof}

Now we explicitly determine the integrand $\f$ for each value of $\lambda\in[0,+\infty]$. We devote the next subsections to the proof of the following proposition.

\begin{proposition}\label{prop : main}  Let $r_0$ be the positive constant appearing in \eqref{rho : ball} and let $T$ be such that $T>r_0$ and $Q_1\subset B_T$. Assume that there exist
\begin{equation*}
   \lambda= \lim_{j\to+\infty}\frac{\ej}{\dej}\in[0,+\infty]
\end{equation*}
and a quasiconvex function $\varphi: \R^{m\times d}\to[0,+\infty)$ such that 
\begin{equation}\label{ipotesi rappresentazione}
     \Gamma(L^p)\text{-}\lim_{j\to+\infty} F^T_j(u,A) = \int_A \f(\nabla u)\, dx, \quad u\in W^{1,p}(\Omega;\R^m),
\end{equation}
and $+\infty$ otherwise in $L^p(\Omega;\R^m)$, for every $A\in\A_{\rm reg}(\Omega)$. Then the following hold for every $M\in \R^{m\times d}$: 
 \begin{itemize}
        \item[(i)] if $\lambda=0$ and \eqref{f : continuous} holds, then
        \begin{equation*}
            \f(M)= \inf\Bigl\{ \int_{B_T} \int_{Q_1} \rho(\xi)f(x,x,(\nabla u ) \xi)\, dx\,d\xi : u\in W^{1,p}_{\#,M}(Q_1; \R^m) \Bigr\};
        \end{equation*}
        \item[(ii)] if $\lambda\in(0,+\infty)$ and \eqref{f : continuous} holds, then 
         \begin{equation*}
            \f(M)= \inf\Bigl\{ \int_{B_T}\int_{Q_1} \rho(\xi) f\Bigl(x,x+\lambda\xi,\frac{u(x+\lambda\xi)-u(x)}{\lambda}\Bigr)\, dx\, d\xi : u\in L^p_{\#,M}(Q_1;\R^m) \Bigr\};
        \end{equation*}
        \item[(iii)] if $\lambda=+\infty$ and either \eqref{H1} or \eqref{H2} holds, then
        \begin{equation*}
            \f(M)=\int_{B_T} \int_{Q_1}\int_{Q_1} \rho(\xi) f(x,y,M\xi)\, dx\,dy\,d\xi.
        \end{equation*}
    \end{itemize}
\end{proposition}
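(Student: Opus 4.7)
The plan is to reduce the characterization of $\f(M)$ to the analysis of the asymptotic cell problems provided by Proposition \ref{prop : cell formula}. For $\lambda\in[0,+\infty)$ (cases (i)--(ii)), part (iii) of that proposition already identifies $\f(M)$ as the limit of the non-local cell quantities
\begin{equation*}
    \f_j(M)=\inf\Bigl\{\int_{B_T}\rho(\xi)\int_{Q_1} f\Bigl(x,x+\lambda_j\xi,\tfrac{u(x+\lambda_j\xi)-u(x)}{\lambda_j}\Bigr)dx\,d\xi : u\in L^p_{\#,M}(Q_1;\R^m)\Bigr\},
\end{equation*}
so the task reduces to computing $\lim_j\f_j(M)$. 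The two main tools are the compactness results of Lemma \ref{lemma : compactness} (applied to almost-minimizers with zero mean on $Q_1$, whose $L^p$-bound on the difference quotients $g_j(x,\xi):=(u_j(x+\lambda_j\xi)-u_j(x))/\lambda_j$ on $B_{r_0}\times Q_1$ follows from the energy bound combined with \eqref{rho : ball} and \eqref{f : growth} and is upgraded to arbitrary bounded $\xi$-domains via Lemma \ref{lemma : diadic}), together with Theorem \ref{thm : LSC} applied with $\Psi((x,\xi),y,z):=\rho(\xi)f(x,y,z)$, whose joint continuity in $(y,z)$ follows from \eqref{f : continuous}, \eqref{f : convex}, and \eqref{f : growth}.

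For the subcritical case (i), Lemma \ref{lemma : compactness} (i) yields $u_j\to u$ strongly in $L^p$ with $u\in W^{1,p}_{\#,M}(Q_1;\R^m)$, and a standard difference-quotient argument shows $g_j\rightharpoonup \nabla u(x)\xi$ weakly in $L^p(Q_1\times B_T)$; since $v_j(x,\xi):=x+\lambda_j\xi\to x$ strongly, Theorem \ref{thm : LSC} gives the liminf bound by $\int_{Q_1\times B_T}\rho(\xi)f(x,x,\nabla u(x)\xi)\,dx\,d\xi\geq f_0(M)$. For the matching limsup I use $u\in W^{1,p}_{\#,M}(Q_1;\R^m)$ itself as a constant competitor in $\f_j$, reduce by density to smooth $u$ (where the difference quotients converge uniformly on $B_T$ to $\nabla u\cdot\xi$), and conclude by dominated convergence together with \eqref{f : continuous}. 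The critical case (ii) proceeds along the same lines: Lemma \ref{lemma : compactness} (ii) provides $u_j\rightharpoonup u$ weakly in $L^p$ on an enlargement of $Q_1$; $L^{p'}$-continuity of translations combined with $\lambda_j\to\lambda>0$ gives $u_j(\cdot+\lambda_j\xi)\rightharpoonup u(\cdot+\lambda\xi)$ weakly for each $\xi$, hence $g_j\rightharpoonup (u(x+\lambda\xi)-u(x))/\lambda$; Theorem \ref{thm : LSC} yields the desired liminf. The limsup uses an $L^p_{\#,M}(Q_1;\R^m)$ competitor combined with $L^p$-continuity of translations and a Vitali-type argument exploiting uniform integrability of $|g_j|^p$.

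The supercritical case (iii) is the main obstacle, since compactness on $u_j$ is no longer available (when $\lambda_j\to+\infty$, an $L^p$-bound on $g_j$ does not control $u_j$). For the upper bound I invoke Proposition \ref{prop : cell formula} (i) with the affine competitor $u(x)\equiv Mx$, reducing to the computation of
\begin{equation*}
    \lim_j \Bigl(\tfrac{\dej}{r}\Bigr)^d\int_{B_T}\rho(\xi)\int_{(Q_{r/\dej})_{\lambda_j}(\xi)} f(x,x+\lambda_j\xi,M\xi)\,dx\,d\xi;
\end{equation*}
a Riemann--Lebesgue argument on the $Q_1$-periodic integrand in $x$ gives the average over one period, and either \eqref{H1} (a symmetry forcing cancellation of the shift $\lambda_j\xi$ in the second entry) or \eqref{H2} (unbounded support of $\rho$ providing enough $\xi$-averaging to decorrelate the two periodic arguments) is what allows passage to $\int_{Q_1}\int_{Q_1}f(x,y,M\xi)\,dx\,dy$. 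For the lower bound I use Proposition \ref{prop : cell formula} (ii) and analyze the weak $L^p$-limits of the difference quotients $g_j$ for almost-minimizers of $\f_j(M)$: these are only bounded, and their admissible two-scale limits (in the vein of \cite{A,N}) have the structure $M\xi+V(x,\xi)$ with $V$ periodic of zero mean (the class $\mathcal{C}$ announced in the introduction), so that any direct passage to the limit in Theorem \ref{thm : LSC} produces an infimum over such $V$. Precisely here \eqref{H1} or \eqref{H2} is needed to force $V\equiv 0$ as the unique minimizer of that infimum, at which point the stated formula is delivered. The key difficulty throughout (iii) is thus the identification of the two-scale limit of $g_j$ and the elimination of non-trivial oscillations, which is exactly the role of the additional hypotheses.
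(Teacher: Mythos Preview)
Your treatment of cases (i) and (ii) is essentially the paper's argument: Proposition~\ref{prop : cell formula}(iii) gives $\f(M)=\lim_j\f_j(M)$; for the liminf you combine Lemma~\ref{lemma : compactness} with Theorem~\ref{thm : LSC}, and for the limsup you test with a fixed (smooth, by density) periodic competitor and use dominated convergence. The minor variant you suggest for the limsup in (ii) --- a Vitali argument instead of smooth approximation --- also works, but the paper's route is shorter.

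For (iii), the skeleton is right but you have misplaced where \eqref{H1}/\eqref{H2} enter, and misdescribed \eqref{H2}. The \emph{upper bound} does not use either hypothesis. After plugging $u(x)=Mx$ into Proposition~\ref{prop : cell formula}(i) and using periodicity in $x$, one is left with $\int_{B_T}\rho(\xi)g_j(\xi)\,d\xi$ where $g_j(\xi)=\int_{Q_1}f(x,x+\lambda_j\xi,M\xi)\,dx$. The point is that $g_j\rightharpoonup\int_{Q_1}\int_{Q_1}f(x,y,M\xi)\,dx\,dy$ weakly in $L^1(B_T)$ (this is the two-scale/Riemann--Lebesgue step for the $Q_1$-periodic map $y\mapsto\int_{Q_1}f(x,x+y,M\xi)\,dx$ evaluated at $y=\lambda_j\xi$), and $\{\rho g_j\}_j$ is equi-integrable thanks to \eqref{f : growth} and \eqref{rho : moment}; these facts alone deliver the upper bound. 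Neither the symmetry \eqref{H1} nor the non-integrability \eqref{H2} plays any role here, and your sentence attributing the ``decorrelation'' of the two periodic entries to \eqref{H1} or \eqref{H2} would not lead to a proof.

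For the \emph{lower bound} in (iii), your two-scale picture is correct, but the mechanisms by which \eqref{H1} and \eqref{H2} force $V=0$ are different from what you wrote. Under \eqref{H1} the functional $F(V)=\int_{B_T}\rho(\xi)\int_{Q_1}\int_{Q_1}f(x,x+y,V(x,y)+M\xi)\,dx\,dy\,d\xi$ satisfies $F(V)=F(-V)$ (change $\xi\mapsto-\xi$ and use \eqref{H1}), and convexity in the last slot then gives $F(0)\le F(V)$. Under \eqref{H2} one has $\int_{\R^d}\rho=+\infty$; combined with \eqref{rho : moment} this forces $\int_{B_T}\rho=+\infty$ (the divergence must sit near the origin), and the coercivity estimate $F(V)+C\ge \alpha c_1\bigl(\int_{B_T}\rho\bigr)\|V\|_{L^p}^p$ then shows $F(V)<+\infty$ only if $V=0$. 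Your description of \eqref{H2} as ``unbounded support of $\rho$ providing $\xi$-averaging'' is incorrect --- $\rho$ is truncated to $B_T$ throughout this proposition, and the relevant feature is its non-integrability at the origin, not its support.
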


\subsection{The subcritical and critical cases, $\lambda\in[0,+\infty)$}

The cases $\lambda=0$ and $\lambda\in(0,+\infty)$ are treated with similar arguments. We recall that, in both these instances, we assume that
\begin{equation*} 
    \tag{H0}
f(x,\cdot,z) \text{ is continuous for almost every } x\in \R^d \text{ and for every } z\in \R^m.
\end{equation*}
 For the sake of exposition, we first establish the corresponding lower bounds, and then we prove their optimality.
 
\subsubsection{Lower bounds}

Let $M\in \R^{m\times d}$ be fixed. By \eqref{ipotesi rappresentazione} and $(iii)$ of Proposition \ref{prop : cell formula}, there exists $\{u_j\}_j\subset L^p_{\#,M}(Q_1; \R^m)$ such that
\begin{equation*}
\f(M)=\lim_{j\to+\infty} \int_{B_T} \rho(\xi)\int_{Q_1} f\Bigl(x,x+\lambda_j\xi,\frac{u_j(x+\lambda_j\xi)-u_j(x)}{\lambda_j}\Bigr)\, dx\,d\xi ,
\end{equation*}
and which, by the translation invariance of the functional, it is not restrictive to suppose satisfy
\begin{equation}\label{mean value bis}
    \int_{Q_1} u_j\, dx = 0
\end{equation}
for every $j\in\N$. Using the lower bound in \eqref{f : growth} and \eqref{rho : ball}, we get that
\begin{equation*}
\sup \int_{B_{r_0}} \int_{Q_1}\Bigl|\frac{u_j(x+\lambda_j\xi)-u_j(x)}{\lambda_j}\Bigr|^p \, dx\,d\xi<+\infty,
\end{equation*}
and then, by Lemma \ref{lemma : diadic} applied with $r=r_0$ and $E=B_T$, we obtain
\begin{equation}\label{preliminarybound}
 \sup_{j} \int_{B_{T}} \int_{Q_1}\Bigl|\frac{u_j(x+\lambda_j\xi)-u_j(x)}{\lambda_j}\Bigr|^p \, dx\,d\xi<+\infty.
\end{equation}
Therefore, we set
\begin{equation*}
    U_j(x,\xi):=\frac{u_j(x+\lambda_j\xi)-u_j(x)}{\lambda_j}, \quad j\in\N,
\end{equation*}
and infer that the sequence of functions $\{U_j\}_j$ is equi-bounded in $L^p(Q_1\times B_T; \R^m)$. Upon extracting a (not relabeled) converging subsequence, we may assume there exists $U$ such that 
\begin{equation}\label{weaksub}
     U_j(x,\xi) \rightharpoonup U(x,\xi) \quad \text{ as } j\to+\infty
\end{equation}
weakly in $L^p(Q_1\times B_T; \R^m)$; then, we apply Theorem \ref{thm : LSC} with $\ell=d\times d, s=(\xi,x)$, and
\begin{equation*}
   E=B_T\times Q_1,  \quad  \Psi((\xi,x),t,q)=\rho(\xi)f(x,t,q),
\end{equation*}
to infer
\begin{align} \nonumber
\f(M)& =\lim_{j\to+\infty} \int_{B_T} \rho(\xi)\int_{Q_1} f\Bigl(x,x+\lambda_j\xi,\frac{u_j(x+\lambda_j\xi)-u_j(x)}{\lambda_j}\Bigr)\, dx\,d\xi \\ \label{lowerboundgenerale}
& \geq \int_{B_T} \rho(\xi)\int_{Q_1} f(x, x+\lambda\xi, U(x,\xi))\, dx\,d\xi.
\end{align}

At this point, we identify the weak limit $U$ in accordance with the value of the parameter $\lambda$ and obtain different lower bounds.

\medskip

{\em Subcritical case, $\lambda=0$.} By the $Q_1$-periodicity of $u_j-Mx$, it easy to observe that \eqref{preliminarybound} implies  
\begin{equation}\label{preliminarybound1}
 \sup_{j} \int_{B_{T}} \int_{A}\Bigl|\frac{u_j(x+\lambda_j\xi)-u_j(x)}{\lambda_j}\Bigr|^p \, dx\,d\xi<+\infty
\end{equation}
for $A$ any bounded open set. Resorting to \eqref{mean value bis} and \eqref{preliminarybound1}, we apply $(i)$ of Lemma \ref{lemma : compactness} with $A$ any bounded open set containing the closure of $Q_1$ and $r=T$ in order to obtain a (not relabeled) subsequence $\{u_{j}\}_j$ and a function $u\in W_{\#,M}^{1,p}(Q_1;\R^m)$ such that $u_{j}\to u$ strongly in $L^p(Q_1;\R^m)$. Note that the direct use of $(i)$ of Lemma \ref{lemma : compactness} with $A=Q_1$ would only imply $u-Mx\in W^{1,p}(Q_1;\R^m)$, which, extended by periodicity on $\R^d$, does not necessarily belong to $ W^{1,p}_{\text{loc}}(\R^d;\R^m)$. 

We claim that 
\begin{equation}\label{subclaim}
   U(x,\xi)=(\nabla u(x))\xi
\end{equation}
for a.e.\ $x\in Q_1$ and $\xi\in B_T$. By the equi-boundedness of $\{U_j\}_j$, it is sufficient testing the weak convergence stated in \eqref{weaksub} with a class of functions whose linear span is dense in the dual space of $L^p(Q_1\times B_T;\R^m)$. Let then $\psi\in C^\infty_c(Q_1;\R^m)$ and $E\subseteq B_T$ be measurable; by a change of variables we have
\begin{align*}
\int_E\int_{Q_1}\psi(x)\cdot U_j(x,\xi)\, dx \, d\xi & = \int_E\int_{Q_1}\psi(x)\cdot\frac{u_j(x+\lambda_j\xi)-u_j(x)}{\lambda_j}\, dx \, d\xi \\ 
& = \int_E \int_{\R^d}u_j(x)\cdot\frac{\psi(x-\lambda_j\xi)-\psi(x)}{\lambda_j}\, dx\, d\xi.
\end{align*}
We observe that, since $u_j-Mx$ is $Q_1$-periodic and $u_j\to u$ strongly in $L^p(Q_1;\R^m)$, we have that $u_j\to u$ strongly in $L^p(K;\R^m)$ for $K$ any compact subset of $\R^d$; therefore, recalling that $\psi$ has compact support, it is possible to pass to the limit using the Dominated Convergence Theorem to get
\begin{align*}
    \int_E\int_{Q_1}\psi(x)\cdot U(x,\xi)\, dx \, d\xi & = -\int_E \int_{\R^d}u(x)\cdot(\nabla \psi(x))\xi\, dx\, d\xi \\
    & = \int_E \int_{Q_1}\psi(x)\cdot(\nabla u(x))\xi\, dx\, d\xi,
\end{align*}
which proves \eqref{subclaim}. Recalling \eqref{lowerboundgenerale}, we get
\begin{equation*}
    \f(M)\geq \inf\Bigl\{ \int_{B_T} \rho(\xi)\int_{Q_1} f(x,x,(\nabla u ) \xi)\, dx\,d\xi : u\in W^{1,p}_{\#,M}(Q_1; \R^m) \Bigr\},
\end{equation*}
that is the lower bound for the case $\lambda=0$.

\medskip

{\em Critical case, $\lambda\in(0,+\infty)$.} Using \eqref{mean value bis}, \eqref{preliminarybound}, and $(ii)$ of Lemma \ref{lemma : compactness}, we obtain a (not relabeled) subsequence $\{u_{j}\}_j$ and a function $u\in L^p_{\#,M}(Q_1; \R^m)$ such that $u_{j}\rightharpoonup u$ weakly in $L^p(Q_1; \R^m)$. Using an argument similar to that employed in the subcritical case, it is then immediate to verify that
\begin{equation*}
    U(x,\xi)=\frac{u(x+\lambda\xi)-u(x)}{\lambda},
\end{equation*}
which, combined with \eqref{lowerboundgenerale}, implies
\begin{equation*}
    \f(M)  \geq \inf\Bigl\{ \int_{B_T} \rho(\xi)\int_{Q_1} f\Bigl(x,x+\lambda\xi,\frac{u(x+\lambda\xi)-u(x)}{\lambda}\Bigr)\, dx\, d\xi : u\in L^p_{\#,M}(Q_1;\R^m) \Bigr\},
\end{equation*}
achieving the lower bound in the case $\lambda\in(0,+\infty)$.

\subsubsection{Upper bounds}

Now we prove that the lower bounds previously obtained are optimal. Although the proofs for each regime are similar, we illustrate them separately for the sake of clarity.

\medskip

{\em Subcritical case, $\lambda=0$.} Let $\eta>0$ and let $w\in W_{\#,M}^{1,p}(Q_1; \R^m)$ be such that
    \begin{equation*}
      \inf\Bigl\{ \int_{B_T} \rho(\xi)\int_{Q_1} f(x,x,(\nabla u)\xi)\, dx\,d\xi : u\in W^{1,p}_{\#,M}(Q_1; \R^m) \Bigr\} + \eta \geq  \int_{B_T} \rho(\xi)\int_{Q_1} f(x,x,(\nabla w)\xi)\, dx\,d\xi.
    \end{equation*} 

We observe that the functional that appears in the minimum problem is continuous with respect to the strong convergence in $W_{\rm loc}^{1,p}(\R^d;\R^m)$. This follows applying the Dominated Convergence Theorem whose use is justified by \eqref{f : convex}, by the estimate
    \begin{equation}\label{forDCT}
       \rho(\xi) f(x,x,(\nabla u(x))\xi) \leq \beta \rho(\xi)|\xi|^p|\nabla u(x)|^p \quad \text{ for a.e. } (x,\xi)\in Q_1\times B_T,
    \end{equation}
which follows by \eqref{f : growth}, and by \eqref{rho : moment}. Therefore, it is not restrictive to suppose that $w\in C^\infty(\R^d;\R^m)$.

Applying $(iii)$ of Proposition \ref{prop : cell formula} we have
\begin{align*}
    \f(M) & = \lim_{j\to+\infty} \inf\Bigl\{ \int_{B_T} \rho(\xi)\int_{Q_1} f\Bigl(x,x+\lambda_j\xi,\frac{u(x+\lambda_j\xi)-u(x)}{\lambda_j}\Bigr)\, dx\,d\xi : u\in L^p_{\#,M}(Q_1;\R^m) \Bigr\} \\
    & \leq \lim_{j\to+\infty} \int_{B_T} \rho(\xi)\int_{Q_1} f\Bigl(x,x+\lambda_j\xi,\frac{w(x+\lambda_j\xi)-w(x)}{\lambda_j}\Bigr)\, dx\,d\xi.
\end{align*}
In order to conclude, we use the Dominated Convergence Theorem. The pointwise convergence is ensured by the fact that $f$ satisfies \eqref{f : convex} and \eqref{f : continuous}, and that
\begin{equation*}
    \frac{w(x+\lambda_j\xi)-w(x)}{\lambda_j} \to (\nabla w(x)) \xi
\end{equation*}
for all $x,\xi\in\R^d$ as $j\to+\infty$; a uniform bound for the integrands 
\begin{equation*}
    \rho(\xi) f\Bigl(x,x+\lambda_j\xi,\frac{w(x+\lambda_j\xi)-w(x)}{\lambda_j}\Bigr), \quad j\in\N,
\end{equation*}
holds in virtue of \eqref{f : growth}, the inequality
\begin{equation}\label{quotient}
    \Bigl|\frac{w(x+\lambda_j\xi)-w(x)}{\lambda_j}\Bigr| \leq \|\nabla w\|_{L^\infty(\R^d;\R^{m\times d})}|\xi|
\end{equation}
for all $x,\xi\in\R^d$, and \eqref{rho : moment}. Finally, we get
\begin{align*}
     \f(M)  & \leq \int_{B_T} \rho(\xi)\int_{Q_1} f(x,x,(\nabla w(x)) \xi)\, dx\,d\xi \\
     & \leq \Bigl(\inf\Bigl\{ \int_{B_T} \rho(\xi)\int_{Q_1} f(x,x,(\nabla u) \xi)\, dx\,d\xi : u\in W^{1,p}_{\#,M}(Q_1; \R^m) \Bigr\}  +\eta\Bigr),
\end{align*}
     which, by the arbitrariness of $\eta$, concludes the proof.

\medskip

{\em Critical case, $\lambda\in(0,+\infty)$.} Let $\eta>0$ and let $w\in L^p_{\#,M}(Q_1; \R^m)\cap C^\infty(\R^d;\R^m)$ be such that
    \begin{multline*}
      \inf\Bigl\{ \int_{B_T} \rho(\xi)\int_{Q_1} f\Bigl(x,x+\lambda\xi,\frac{u(x+\lambda\xi)-u(x)}{\lambda}\Bigr)\, dx\, d\xi : u\in L^p_{\#,M}(Q_1;\R^m) \Bigr\} +\eta \\
      \geq \int_{B_T} \rho(\xi)\int_{Q_1} f\Bigl(x,x+\lambda\xi,\frac{w(x+\lambda\xi)-w(x)}{\lambda}\Bigr)\, dx\, d\xi.
    \end{multline*} 
Note indeed that a similar argument to that used for $\lambda=0$ proves the continuity of the functional with respect to the strong $L^p$-convergence.
  
  Once again, we apply $(iii)$ of Proposition \ref{prop : cell formula}, use $w$ as a test function for the corresponding minimum problems, and apply the Dominated Convergence Theorem (whose use is made possible also by \eqref{f : convex}, \eqref{f : continuous}, \eqref{f : growth}, \eqref{quotient}, and \eqref{rho : moment}) to infer
\begin{align*}
     \f(M)  & \leq  \int_{B_T} \rho(\xi)\int_{Q_1} f\Bigl(x,x+\lambda\xi,\frac{w(x+\lambda\xi)-w(x)}{\lambda} \Bigl)\, dx\,d\xi \\
     & \leq  \Bigl(\inf\Bigl\{ \int_{B_T} \rho(\xi)\int_{Q_1} f\Bigl(x,x+\lambda\xi,\frac{u(x+\lambda\xi)-u(x)}{\lambda}\Bigr)\, dx\, d\xi : u\in L^p_{\#,M}(Q_1;\R^m) \Bigr\} +\eta\Bigr),
\end{align*}
     which, by the arbitrariness of $\eta$, concludes the proof.

\subsection{The supercritical case, $\lambda=+\infty$.}

The supercritical regime requires a different and more complex argument. Throughout this section, we assume that either
\begin{equation*}
    \tag{H1}
       \rho(\xi)f(x,y,z)=\rho(-\xi)f(x,y,-z) 
    \end{equation*}
    for almost every $\xi,x,y\in \R^d$ and for every $z\in\R^m$, or that 
\begin{equation*}
    \tag{H2}
    \int_{\R^d}\rho(\xi)\,d\xi =+\infty.
\end{equation*}

\subsubsection{Lower bound}

Let $M\in \R^{m\times d}$ be fixed. By $(ii)$ of Proposition \ref{prop : cell formula}, there exists $\{u_j\}_j\subset L^p_{\#,M}(Q_1; \R^m)$ such that
\begin{equation*}
\f(M)\geq \limsup_{j\to+\infty} \int_{B_T} \rho(\xi)\int_{Q_1} f\Bigl(x,x+\lambda_j\xi,\frac{u_j(x+\lambda_j\xi)-u_j(x)}{\lambda_j}\Bigr)\ dx\,d\xi.
\end{equation*}
In the supercritical regime, there are no compactness properties that we are able to infer for the sequence $\{u_j\}_j$. We consider the $Q_1$-periodic functions $v_j(x):=u_j(x)-Mx, j\in\N,$ and we let
\begin{equation*}
    V_j(x,\xi):=\frac{v_j(x+\xi)-v_j(x)}{\lambda_j}, \quad j\in\N,
\end{equation*}
so that
\begin{equation}\label{superstarting}
\f(M)\geq \limsup_{j\to+\infty} \int_{B_T} \rho(\xi)\int_{Q_1} f(x,x+\lambda_j\xi,V_j(x,\lambda_j\xi)+M\xi)\, dx\,d\xi.
\end{equation}

In order to describe the idea behind our proof, we note that if we had $V_j(x,\xi)=V(x,\xi)$ for all $j\in\N$, then
\begin{align*}
\f(M) & \geq \limsup_{j\to+\infty} \int_{B_T} \rho(\xi)\int_{Q_1} f(x,x+\lambda_j\xi,V(x,\lambda_j\xi)+M\xi)\, dx\,d\xi \\
& = \limsup_{j\to+\infty} \int_{B_T} \rho(\xi)g(\xi,\lambda_j\xi)\,d\xi,
\end{align*}
where we set
\begin{equation*}
   g(\xi,y):= \int_{Q_1} f(x,x+y,V(x,y)+M\xi)\, dx.
\end{equation*}
Since $g$ is periodic in the second variable and $\lambda_j\to+\infty$, we may use a known result, which can be regarded as a consequence of a two-scale convergence, to obtain that
\begin{equation*}
    g(\xi, \lambda_j\xi)\rightharpoonup \int_{Q_1}g(\xi,y)\,dy
\end{equation*}
in a weak sense; and then, formally passing to the limit, we would get
\begin{equation*}
\f(M) \geq \int_{B_T}\rho(\xi)\int_{Q_1}\int_{Q_1} f(x,x+y,V(x,y)+M\xi)\, dx\,dy\,d\xi.
\end{equation*}

Our aim is to mimic this reasoning taking into account that $\{V_j\}_j$ is not a constant sequence. This requires the study of the equi-integrability of an auxiliary sequence of functions defined similarly to the function $g$ above, and some additional care has to be used since in general $\rho$ does not belong to any $L^p$-space. These issues seem to prevent the direct use of the two-scale convergence and, also in our setting, allows us to pass to the limit only upon having truncated `vertically' the kernel.

\smallskip

In this subsection, from now on, we shall further assume that
\begin{equation*}
    \inf_j \lambda_j >2;
\end{equation*}
this condition is not restrictive since we are assuming that $\lambda_j\to+\infty$.

We note that each $V_j$ is $Q_1$-periodic in both variables. Moreover, resorting to Lemma \ref{lemma : diadic}, we have that \eqref{preliminarybound} holds, which implies
\begin{equation}\label{superbound}
\sup_{j} \|V_j(x,\lambda_j\xi)\|_{L^p(Q_1\times B_T; \R^m)} <+\infty.
\end{equation}
The following observation is then a simple consequence.

\begin{lemma}\label{lemma : supercritical 1} We have
\begin{equation*}
    \sup_{j} \|V_j(x,\xi)\|_{L^p(Q_1\times Q_1; \R^m)} <+\infty.
\end{equation*}
\end{lemma}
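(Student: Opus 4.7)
The plan is to transfer the previously established bound on $V_j(x,\lambda_j\xi)$ on the enlarged set $Q_1\times B_T$ to a bound on $V_j(x,\xi)$ on $Q_1\times Q_1$ by combining a rescaling in the $\xi$-variable with the $Q_1$-periodicity of $V_j$ in both arguments.

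First I would unwind the bound \eqref{superbound}. Performing the change of variables $\eta:=\lambda_j\xi$ in the inner integration converts
\begin{equation*}
\int_{Q_1}\int_{B_T}|V_j(x,\lambda_j\xi)|^p\,dx\,d\xi = \frac{1}{\lambda_j^d}\int_{Q_1}\int_{B_{T\lambda_j}}|V_j(x,\eta)|^p\,dx\,d\eta,
\end{equation*}
so \eqref{superbound} gives a uniform bound of the form $\int_{Q_1}\int_{B_{T\lambda_j}}|V_j(x,\eta)|^p\,dx\,d\eta \leq S\lambda_j^d$.

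Next I would observe the crucial periodicity property: since $v_j$ is $Q_1$-periodic, the map $\eta\mapsto V_j(x,\eta)$ is $Q_1$-periodic for every fixed $x$ (and similarly in $x$ for fixed $\eta$). Because $\inf_j \lambda_j>2$ and $\lambda_j\to+\infty$, the ball $B_{T\lambda_j}$ contains at least $N_j$ disjoint integer translates of $Q_1$, with $N_j\geq c_d\lambda_j^d$ for some dimensional constant $c_d>0$ depending only on $T$. By $Q_1$-periodicity in $\eta$, the integral of $|V_j(x,\cdot)|^p$ over each such translate equals the integral over $Q_1$, hence
\begin{equation*}
\int_{Q_1}\int_{B_{T\lambda_j}}|V_j(x,\eta)|^p\,dx\,d\eta \geq c_d\lambda_j^d \int_{Q_1}\int_{Q_1}|V_j(x,\eta)|^p\,dx\,d\eta.
\end{equation*}

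Combining the two inequalities, the factors $\lambda_j^d$ cancel and yield $\|V_j\|_{L^p(Q_1\times Q_1;\R^m)}^p \leq S/c_d$ uniformly in $j$, which is the claim. No real obstacle is expected: the only subtle point is making precise the lower bound on the number $N_j$ of integer translates of $Q_1$ contained in $B_{T\lambda_j}$, which is a routine estimate once one inscribes a cube of side-length comparable to $T\lambda_j$ into the ball.
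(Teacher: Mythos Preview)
Your proposal is correct and follows essentially the same route as the paper: rescale the $\xi$-variable, then use the $Q_1$-periodicity of $V_j$ in its second argument to count integer translates of $Q_1$ inside the blown-up domain so that the $\lambda_j^d$ factors cancel. The only cosmetic difference is that the paper first restricts from $B_T$ to $Q_1$ (using $Q_1\subset B_T$) \emph{before} rescaling, which produces the cube $Q_{\lambda_j}$ containing exactly $\lfloor\lambda_j\rfloor^d$ integer translates of $Q_1$ and so avoids the ball-counting estimate you mention; otherwise the argument is identical.
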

\begin{proof}
    Since $Q_1\subset B_T$, we combine the periodicity of $V_j$ in the second variable with a change of variables to get
\begin{align*}
    \int_{B_T}\int_{Q_1} |V_j(x,\lambda_j\xi)|^p\,dx\,d\xi & \geq \int_{Q_1}\int_{Q_{1}} |V_j(x,\lambda_j\xi)|^p\,dx\,d\xi \\
    & = \lambda_j^{-d}\int_{Q_{\lambda_j}}\int_{Q_{1}} |V_j(x, \xi)|^p\,dx\,d\xi \\
    & \geq \lambda_j^{-d} \lfloor \lambda_j \rfloor^d \int_{Q_1}\int_{Q_{1}} |V_j(x, \xi)|^p\,dx\,d\xi \\
    & \geq \frac{1}{2}\int_{Q_1}\int_{Q_{1}} |V_j(x, \xi)|^p\,dx\,d\xi,
\end{align*}
where in the last inequality we have used that $\lambda_j>2$ for all $j\in\N$. By \eqref{superbound}, the proof is concluded.
\end{proof}

Now we let
\begin{equation*}
    g_j(\xi):= \int_{Q_1} f(x,x+\lambda_j\xi,V_j(x,\lambda_j\xi)+M\xi)\, dx, \quad \xi\in \R^d,
\end{equation*}
\begin{equation*}
    \widetilde{g}_j(\xi):=\int_{Q_1} \int_{Q_1} f(x,x+y,V_j(x,y)+M\xi)\, dx\, dy, \quad \xi\in \R^d,
\end{equation*}
and observe the following.

\begin{lemma}\label{lemma : supercritical 2}
    The sequences $\{g_j\}_j$ and $\{\widetilde{g}_j\}_j$ are equi-integrable in $L^1(B_T)$.
\end{lemma}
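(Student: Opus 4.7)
The proof will split into two parts. My plan is to handle $\widetilde{g}_j$ first, by showing it is in fact uniformly bounded in $L^\infty(B_T)$, which makes equi-integrability trivial. Using the growth condition \eqref{f : growth} together with $|a+b|^p \leq 2^{p-1}(|a|^p+|b|^p)$, I would estimate
\[
\widetilde{g}_j(\xi) \leq C\Bigl(\int_{Q_1}\int_{Q_1}|V_j(x,y)|^p\, dx\, dy + |\xi|^p\Bigr),
\]
and apply Lemma \ref{lemma : supercritical 1} together with $|\xi|^p \leq T^p$ on $B_T$ to conclude $\sup_j \|\widetilde{g}_j\|_{L^\infty(B_T)} < +\infty$.

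For $g_j$, the same sort of estimate yields $g_j(\xi) \leq C(\phi_j(\lambda_j\xi) + |\xi|^p)$, where $\phi_j(y) := \int_{Q_1}|V_j(x,y)|^p\, dx$ is $Q_1$-periodic (since $v_j$, and hence $V_j$, is periodic in $y$) and satisfies $\sup_j \int_{Q_1}\phi_j\, dy < +\infty$ by Lemma \ref{lemma : supercritical 1}. The key step of my plan is then to upgrade this $L^1$ bound to a pointwise bound on $\phi_j$; from this, $\phi_j(\lambda_j\xi)$ is uniformly bounded in $L^\infty(B_T)$ and the same argument as for $\widetilde{g}_j$ produces equi-integrability of $\{g_j\}_j$.

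To prove the pointwise bound, I would apply a periodic Poincar\'e-type estimate to $v_j$: letting $\bar v_j := \int_{Q_1} v_j$, Jensen's inequality, the change of variable $z = x + y$, and the periodicity of $v_j$ give
\[
\|v_j - \bar v_j\|_{L^p(Q_1)}^p \leq \int_{Q_1}\int_{Q_1}|v_j(x)-v_j(z)|^p\, dz\, dx = \lambda_j^p \int_{Q_1}\int_{Q_1}|V_j(x,y)|^p\, dx\, dy \leq C\lambda_j^p.
\]
Then, writing $v_j(x+y) - v_j(x) = (v_j(x+y) - \bar v_j) - (v_j(x) - \bar v_j)$, the elementary $p$-power inequality together with the periodicity of $v_j$ immediately yields $\phi_j(y) \leq 2^p \lambda_j^{-p}\|v_j - \bar v_j\|_{L^p(Q_1)}^p \leq C'$ uniformly in $j$ and $y$.

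The main obstacle is conceptual rather than computational: since we have no higher-$L^q$ estimate on $V_j$, any direct attempt at equi-integrability based solely on the $L^1$ bound on $\phi_j$ (via H\"older or de la Vall\'ee Poussin) would be doomed, and one might even suspect counterexamples among arbitrary sequences of periodic functions with such concentration. The crucial insight is that the difference-quotient structure of $V_j$, combined with the periodicity of $v_j$, upgrades the integral bound on $\phi_j$ to a uniform sup-norm bound through the Poincar\'e inequality, which then reduces equi-integrability to a triviality.
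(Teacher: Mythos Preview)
Your proposal is correct, and for $\{\widetilde g_j\}_j$ it coincides with the paper's argument: both simply combine \eqref{f : growth} with Lemma~\ref{lemma : supercritical 1} to obtain a uniform pointwise bound.

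For $\{g_j\}_j$, however, your route is genuinely different from the paper's. The paper does not attempt a pointwise bound: it fixes a cube $Q\subset B_T$, rescales $\xi\mapsto\lambda_j\xi$, covers $\lambda_jQ$ by unit cells, and uses only the periodicity of $V_j$ in the second variable together with the $L^p(Q_1\times Q_1)$ bound from Lemma~\ref{lemma : supercritical 1} to show $\limsup_j\int_Q g_j\le C|Q|$; equi-integrability then follows by density. Your argument instead exploits the specific difference-quotient structure $V_j(x,y)=\lambda_j^{-1}(v_j(x+y)-v_j(x))$: the periodic Poincar\'e step $\|v_j-\bar v_j\|_{L^p(Q_1)}^p\le\lambda_j^p\|V_j\|_{L^p(Q_1\times Q_1)}^p$ upgrades the integral bound on $\phi_j$ to a uniform $L^\infty$ bound, hence $\sup_j\|g_j\|_{L^\infty(B_T)}<+\infty$. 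This is shorter and strictly stronger than what the paper proves. The trade-off is that the paper's averaging argument would survive for an arbitrary sequence $\{V_j\}_j$ that is $Q_1$-periodic in the second variable and bounded in $L^p(Q_1\times Q_1)$, whereas your argument genuinely needs the difference-quotient form; in the present context that extra structure is available, so your simplification is legitimate.
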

\begin{proof}
Consider first the sequence $\{g_j\}_j$. By a density argument, the thesis follows if we prove that there exists a positive constant $C$ such that for any cube $Q\subset B_T$ it holds
\begin{equation*}
  \limsup_{j\to+\infty}  \int_Q g_j(\xi)\,d\xi\leq C|Q|.
\end{equation*} 

To this end, we first observe that by \eqref{f : growth} we have
\begin{align}\notag
\int_Q g_j(\xi)\,d\xi & = \int_Q\int_{Q_1} f(x,x+\lambda_j\xi,V_j(x,\lambda_j\xi)+M\xi)\, dx\,d\xi \\ \notag
& \leq \beta \int_Q\int_{Q_1} |V_j(x,\lambda_j\xi)+M\xi|^p\, dx\,d\xi \\ \label{equi1}
& \leq \beta 2^{p-1} \Bigl\{ \int_Q\int_{Q_1} |V_j(x,\lambda_j\xi)|^p\,dx\,d\xi+ |M|^p|T|^p|Q|\Bigr\}.
\end{align}
Now we note that, by \eqref{f : period},
\begin{align*}
    \int_Q\int_{Q_1} |V_j(x,\lambda_j\xi)|^p\,dx\,d\xi & = \lambda_j^{-d}\int_{\lambda_j Q}\int_{Q_1}|V_j(x,\xi)|^p\,dx\,d\xi \\
    & \leq \lambda_j^{-d} \sum_{i\in \mathcal{I}_j} \int_{Q_1}\int_{Q_1}|V_j(x,\xi)|^p\,dx\,d\xi,
\end{align*}
where $\mathcal{I}_j:=\{i\in\mathbb Z^d : i+Q_1\cap \lambda_jQ\neq \emptyset\}$. Since $\lambda_j^{-d}\#\mathcal{I}_j\to |Q|$, we deduce
\begin{equation*}
    \limsup_{j\to+\infty}\int_Q\int_{Q_1} |V_j(x,\lambda_j\xi)|^p\,dx\,d\xi \leq  |Q|\limsup_{j\to+\infty}\|V_j(x,\xi)\|^p_{L^p(Q_1\times Q_1;\R^m)},
\end{equation*}
which, together with \eqref{equi1}, concludes the proof in light of Lemma \ref{lemma : supercritical 1}.

As for the sequence $\{\widetilde{g}_j\}_j$, by \eqref{f : growth} we simply have
\begin{align*}
\int_A \widetilde{g}_j(\xi)\,d\xi & = \int_A\int_{Q_1} \int_{Q_1} f(x,x+y,V_j(x,y)+M\xi)\, dx\, dy\,d\xi \\
& \leq \beta 2^{p-1} \Bigl\{\|V_j(x,y)\|^p_{L^p(Q_1\times Q_1;\R^m)}|A|+ |M|^p|T|^p|A|\Bigr\}
\end{align*}
for any $A\subset B_T$ measurable and for every $j\in\N$, and the conclusion follows as before.
\end{proof}

We state our main lemma.

\begin{lemma}\label{lemma : supercritical 3}
    It holds that
    \begin{equation*}
        g_j(\xi)- \widetilde{g}_j(\xi) \rightharpoonup 0
    \end{equation*}
    weakly in $L^1(B_T)$ as $j\to+\infty$.
\end{lemma}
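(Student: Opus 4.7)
The plan is to combine the equi-integrability from Lemma \ref{lemma : supercritical 2} with a two-scale-type averaging that exploits the $Q_1$-periodicity in the second variable of both $V_j$ and $f(x,\cdot,z)$. By Lemma \ref{lemma : supercritical 2} and the Dunford--Pettis theorem, the sequence $\{g_j-\widetilde g_j\}_j$ is weakly relatively compact in $L^1(B_T)$; hence, using equi-integrability to approximate indicator functions of measurable sets, the proof reduces to showing
\begin{equation*}
    \lim_{j\to +\infty}\int_{B_T}\varphi(\xi)\bigl(g_j(\xi)-\widetilde g_j(\xi)\bigr)\,d\xi=0
\end{equation*}
for every $\varphi\in C_c(B_T)$.

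Fix such a $\varphi$ with compact support $K\Subset B_T$, and set $h_j(x,y,\xi):=f(x,x+y,V_j(x,y)+M\xi)$; by \eqref{f : period} and the $Q_1$-periodicity of $V_j(x,\cdot)$, the map $y\mapsto h_j(x,y,\xi)$ is $Q_1$-periodic. For each sufficiently large $j$, partition a fixed cube $Q_R\supset K$ into sub-cubes $\{Q_i^j\}_{i\in I_j}$ of side $1/\lambda_j$ with centers $\xi_i^j$. The argument then has three steps: (i) \emph{freezing $\xi$ inside $h_j$}: convexity together with the $p$-growth \eqref{f : growth} makes $f(x,y,\cdot)$ locally Lipschitz with constant $C(1+|z_1|^{p-1}+|z_2|^{p-1})$; applying this with $z_1-z_2=M(\xi-\xi_i^j)$, together with Hölder's inequality and the uniform bound \eqref{superbound}, one obtains
\begin{equation*}
    \sum_{i\in I_j}\int_{Q_i^j}\int_{Q_1}\bigl|h_j(x,\lambda_j\xi,\xi)-h_j(x,\lambda_j\xi,\xi_i^j)\bigr|\,dx\,d\xi=O(1/\lambda_j),
\end{equation*}
and the analogous estimate for the $y$-averaged integrand defining $\widetilde g_j$ (which uses Lemma \ref{lemma : supercritical 1} in place of \eqref{superbound}); (ii) \emph{periodic averaging}: for each frozen $\xi_i^j$, the change of variables $y=\lambda_j\xi$ and the $Q_1$-periodicity of $h_j(x,\cdot,\xi_i^j)$ yield the exact identity
\begin{equation*}
    \int_{Q_i^j} h_j(x,\lambda_j\xi,\xi_i^j)\,d\xi=|Q_i^j|\int_{Q_1} h_j(x,y,\xi_i^j)\,dy,
\end{equation*}
so that, after integrating in $x\in Q_1$ and summing over $i$, the $g_j$-contribution with frozen $\xi$ matches the $\widetilde g_j$-contribution with frozen $\xi$; (iii) \emph{freezing $\xi$ in $\varphi$}: by uniform continuity, $|\varphi(\xi)-\varphi(\xi_i^j)|\leq\omega_\varphi(\sqrt d/\lambda_j)$ on each $Q_i^j$, and Lemma \ref{lemma : supercritical 2} makes the resulting error on $\int\varphi(g_j-\widetilde g_j)\,d\xi$ infinitesimal. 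Combining (i)--(iii) and letting $j\to+\infty$ yields the conclusion. Note that, for $j$ large, every $Q_i^j$ intersecting $K$ is contained in $B_T$, so no boundary contributions arise.

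The main obstacle is the error estimate in (i): the local Lipschitz constant of $f(x,y,\cdot)$ involves the factor $1+|V_j|^{p-1}$, which is bounded only in $L^{p/(p-1)}$, not in $L^\infty$. However, the uniform bound $|\xi-\xi_i^j|\leq\sqrt d/\lambda_j$ supplies a prefactor $1/\lambda_j$, and Hölder's inequality against the uniform $L^p$-bounds provided by \eqref{superbound} and Lemma \ref{lemma : supercritical 1} absorbs the missing integrability, producing an overall error of order $1/\lambda_j\to 0$. It is worth noting that the proof does not require hypotheses \eqref{H1} or \eqref{H2}; these will instead be invoked only in the subsequent identification of $\f(M)$.
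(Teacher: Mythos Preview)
Your proposal is correct and follows essentially the same strategy as the paper's proof: reduce to suitable test functions via the equi-integrability of Lemma~\ref{lemma : supercritical 2}, partition into cubes of scale $1/\lambda_j$, freeze the slow variable $\xi$ in $h_j$ using the local Lipschitz estimate \eqref{loclip} coming from \eqref{f : convex} and \eqref{f : growth}, and then exploit the $Q_1$-periodicity in the second variable to match the two contributions exactly. The only cosmetic differences are that the paper tests against characteristic functions of cubes (so your step~(iii) is absorbed into the choice of test function) and performs the blow-up $\xi\mapsto\lambda_j\xi$ first, working with unit cubes in $\lambda_j Q$ rather than cubes of side $1/\lambda_j$ in $B_T$; after the change of variables these are the same computation, and the paper's boundary term $E_j$ plays the role of your remark that cubes meeting $K$ eventually lie in $B_T$.
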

\begin{proof}
    Due to the equi-integrability of the sequence in $L^1(B_T)$ established in Lemma \ref{lemma : supercritical 2}, it suffices to test the weak convergence with the characteristic functions of cubes contained in $B_T$. Let $Q$ denote such a cube, we have
\begin{align*}
\int_Q g_j(\xi)-\widetilde{g}_j(\xi)\, d\xi  & = \lambda_j^{-d} \int_{\lambda_jQ} g_j(\xi/\lambda_j)- \widetilde{g}_j(\xi/\lambda_j)\,d\xi \\
& = \lambda_j^{-d} \sum_{i\in \mathcal{I}_j}\int_{i+Q_1} g_j(\xi/\lambda_j)- \widetilde{g}_j(\xi/\lambda_j)\,d\xi \\
& \quad + \lambda_j^{-d} \int_{E_j} g_j(\xi/\lambda_j)- \widetilde{g}_j(\xi/\lambda_j)\,d\xi,
\end{align*}
where we set $\mathcal{I}_j:=\{i\in \mathbb Z^d : i+Q_1\subset \lambda_jQ\}$ and $E_j$ is obtained removing from $\lambda_jQ$ all the unit cubes of the fundamental lattice. Since there exists $c>0$ such that $|E_j|\leq c\lambda_j^{d-1}$ for all $j\in\N$, we have that
\begin{equation*}
    \lambda_j^{-d} \int_{E_j} g_j(\xi/\lambda_j)- \widetilde{g}_j(\xi/\lambda_j)\,d\xi =  \int_{\lambda_j^{-1}E_j} g_j(\xi)- \widetilde{g}_j(\xi)\,d\xi
\end{equation*}
vanishes as $j\to+\infty$ by Lemma \ref{lemma : supercritical 2}; therefore, applying a change of variables, the periodicity of each $V_j$, and \eqref{f : period}, the claim follows if we prove that
\begin{multline}\label{superclaim}
    \lim_{j\to+\infty } \lambda_j^{-d} \sum_{i\in \mathcal{I}_j}\int_{Q_1} \Bigl\{ \int_{Q_1} f\Bigl(x,x+\xi,V_j(x,\xi)+ M\frac{i+\xi}{\lambda_j}\Bigr)\, dx\\ 
     - \int_{Q_1} \int_{Q_1} f\Bigl(x,x+y,V_j(x,y)+M\frac{i+\xi}{\lambda_j}\Bigr)\, dx\, dy\Bigr\}\, d\xi =0.
\end{multline}
To this end, we first prove that
\begin{multline}\label{subsuperclaim1}
    \lim_{j\to+\infty } \lambda_j^{-d} \sum_{i\in \mathcal{I}_j}\int_{Q_1}  \int_{Q_1} f\Bigl(x,x+\xi,V_j(x,\xi)+ M\frac{i+\xi}{\lambda_j}\Bigr)\, dx\,d\xi \\ -\int_Q \int_{Q_1} \int_{Q_1} f(x,x+\xi,V_j(x,\xi)+Mz)\,dx \, d\xi\, dz =0.
\end{multline}
Indeed, we have
\begin{align}\notag
    & \hspace{-1cm}\lim_{j\to+\infty} \Bigl| \lambda_j^{-d} \sum_{i\in \mathcal{I}_j}\int_{Q_1} \int_{Q_1} f\Bigl(x,x+\xi,V_j(x,\xi)+ M\frac{i+\xi}{\lambda_j}\Bigr)\, dx\,d\xi \\ \notag
    & \hspace{5cm} -\int_Q\int_{Q_1} \int_{Q_1} f(x,x+\xi,V_j(x,\xi)+Mz)\,dx \, d\xi\, dz \Bigr|\\ \notag
    & \hspace{-1cm} = \lim_{j\to+\infty} \Bigl|\sum_{i\in \mathcal{I}_j} \int_{\frac{i+Q_1}{\lambda_j}} \Bigl\{\int_{Q_1}  \int_{Q_1} f\Bigl(x,x+\xi,V_j(x,\xi)+ M\frac{i+\xi}{\lambda_j}\Bigr)\, dx\,d\xi \\ \notag 
    & \hspace{5cm} - \int_{Q_1} \int_{Q_1} f(x,x+\xi,V_j(x,\xi)+Mz)\,dx\, d\xi \Bigr\}\, dz \Bigr| \\  \notag
    & \hspace{-1cm} \leq \lim_{j\to+\infty} \sum_{i\in \mathcal{I}_j} \int_{\frac{i+Q_1}{\lambda_j}}\int_{Q_1}\int_{Q_1}\Bigl|f\Bigl(x,x+\xi,V_j(x,\xi) + M\frac{i+\xi}{\lambda_j}\Bigr) \\ \label{superclaim-1}
    & \hspace{5cm}-f(x,x+\xi,V_j(x,\xi)+Mz)\Bigr|\,dx\,d\xi\,dz,
\end{align}
where we used that, by Lemma \ref{lemma : supercritical 2},
\begin{align*}
     \lim_{j\to+\infty} \int_{Q\setminus \bigcup_{i\in\mathcal{I}_j} \frac{i+Q_1}{\lambda_j} }\Bigr\{\int_{Q_1} \int_{Q_1} f(x,x+\xi,V_j(x,\xi)+Mz)\,dx\, d\xi \Bigr\}\, dz = \lim_{j\to+\infty} \int_{Q\setminus \bigcup_{i\in\mathcal{I}_j} \frac{i+Q_1}{\lambda_j} } \widetilde{g}_j(z)\,dz=0.
\end{align*}

We recall that, by \eqref{f : convex}, the function $f$ is locally Lipschitz continuous in the last variable and, in particular, resorting also to \eqref{f : growth}, it holds that
\begin{equation}\label{loclip}
    |f(x,y,z_1)-f(z,y,z_2)|\leq C(1+|z_1|^{p-1}+|z_2|^{p-1})|z_1-z_2|
\end{equation}
for a.e.\! $x,y\in\R^d$ and for every $z_1,z_2\in \R^m$, where $C$ is a constant that depends on $p,d,$ and the constant $\beta$ appearing in \eqref{f : growth}. Let then $i\in \mathcal{I}_j$ and $z\in \frac{i+Q_1}{\lambda_j}$; using \eqref{loclip} we have
\begin{align*}
    &   \Bigl|f\Bigl(x,x+\xi,V_j(x,\xi) + M\frac{i+\xi}{\lambda_j}\Bigr)-f(x,x+\xi,V_j(x,\xi)+Mz)\Bigr| \\
    & \leq C\Bigl(1+\Bigl|V_j(x,\xi)+ M\frac{i+\xi}{\lambda_j}\Bigr|^{p-1}+|V_j(x,\xi)+Mz|^{p-1}\Bigr)\Bigl|M\Bigl(\frac{i+\xi}{\lambda_j}-z\Bigr)\Bigr| \\
    &\leq C'(1+|V_j(x,\xi)|^{p-1})\lambda_j^{-1}
\end{align*}
for a.e.\! $x,\xi\in Q_1$, where $C'$ is a positive constant depending on $p,d,\beta,$ and $M$. Combining this estimate with \eqref{superclaim-1} and using H\"older's inequality, we infer
\begin{align*}
    & \hspace{-1cm} \lim_{j\to+\infty } \Bigl|\lambda_j^{-d} \sum_{i\in \mathcal{I}_j}\int_{Q_1} \int_{Q_1} f\Bigl(x,x+\xi,V_j(x,\xi) + M\frac{i+\xi}{\lambda_j}\Bigr)\, dx\,d\xi \\ \notag
    & \hspace{4cm} -\int_Q \int_{Q_1} \int_{Q_1} f(x,x+\xi,V_j(x,\xi)+Mz)\, d\xi\,dx \, dz \Bigr| \\ \notag
    & \hspace{-1cm} \leq \lim_{j\to+\infty}C'\lambda_j^{-1}\int_Q\int_{Q_1}\int_{Q_1} 1+|V_j(x,\xi)|^{p-1}\, dx\,d\xi\,dz \\
    & \hspace{-1cm} \leq C'|Q|\lim_{j\to+\infty}\lambda_j^{-1}(1+\|V_j(x,\xi)\|^p_{L^p(Q_1\times Q_1;\R^m)}),
\end{align*}
which equals $0$ by Lemma \ref{lemma : supercritical 1}. Then, \eqref{subsuperclaim1} is proved.

A similar argument shows that
\begin{multline} \label{subsuperclaim2}
     \lim_{j\to+\infty } \lambda_j^{-d} \sum_{i\in \mathcal{I}_j}\int_{Q_1}   \int_{Q_1} \int_{Q_1} f\Bigl(x,x+y,V_j(x,y)+M\frac{i+\xi}{\lambda_j}\Bigr)\, dx \, dy\,d\xi \\ 
     -\int_Q \int_{Q_1} \int_{Q_1} f(x,x+y,V_j(x,y)+Mz)\,dx \, dy\, dz =0.
\end{multline}
Indeed, arguing as before, we have
\begin{align*}
    & \hspace{-1cm} \lim_{j\to+\infty } \Bigl| \lambda_j^{-d} \sum_{i\in \mathcal{I}_j}\int_{Q_1} \int_{Q_1} \int_{Q_1} f\Bigl(x,x+y,V_j(x,y)+M\frac{i+\xi}{\lambda_j}\, \Bigr)dx \, dy\, d\xi \\
    & \hspace{5cm} -\int_Q \int_{Q_1} \int_{Q_1} f(x,x+y,V_j(x,y)+Mz)\,dx \, dy\, dz \Bigr| \\
&  \hspace{-1cm} = \lim_{j\to+\infty} \Bigl| \sum_{i\in \mathcal{I}_j} \int_{\frac{i+Q_1}{\lambda_j}} \Bigl\{ \int_{Q_1} \int_{Q_1} \int_{Q_1} f\Bigl(x,x+y,V_j(x,y)+M\frac{i+\xi}{\lambda_j}\Bigr)\, dx \, dy\, d\xi \\
& \hspace{5cm} -\int_{Q_1}\int_{Q_1} f(x,x+y,V_j(x,y)+Mz)\,dx \, dy \Bigr\}\,dz \Bigr| \\
    & \hspace{-1cm} \leq \lim_{j\to+\infty}  \sum_{i\in \mathcal{I}_j} \int_{\frac{i+Q_1}{\lambda_j}}\Bigl\{ \int_{Q_1}\Bigl[\int_{Q_1}\int_{Q_1}\Bigl|f\Bigl(x,x+y,V_j(x,y)+M\frac{i+\xi}{\lambda_j}\Bigr) \\
    & \hspace{5cm} - f(x,x+y,V_j(x,y)+Mz) \Bigr|\,dx\,dy\Bigr]\,d\xi \Bigr\}\,dz \\
    & \hspace{-1cm} \leq C'|Q|\lim_{j\to+\infty} \lambda_j^{-1}(1+\|V_j(x,y)\|^p_{L^p(Q_1\times Q_1;\R^m)}),
\end{align*}
which equals $0$, and this proves \eqref{subsuperclaim2}.

Gathering \eqref{subsuperclaim1} and \eqref{subsuperclaim2} we obtain \eqref{superclaim}, which is the thesis.
\end{proof}

We are now in position to prove the lower bound. Recalling \eqref{superstarting} and the definitions of $g_j$ and $\widetilde{g}_j, j\in\N,$ we fix any $R>0$ and apply Lemma \ref{lemma : supercritical 3} to obtain
\begin{align*}
    \f(M) & \geq \limsup_{j\to+\infty} \int_{B_T} \rho(\xi)g_j(\xi)\,d\xi \\
    & \geq \liminf_{j\to+\infty} \int_{B_T} \min\{\rho(\xi),R\}(g_j(\xi)-\widetilde{g}_j(\xi))\,d\xi + \int_{B_T} \min\{\rho(\xi),R\}\widetilde{g}_j(\xi)\,d\xi \\
    & = \liminf_{j\to+\infty}  \int_{B_T} \min\{\rho(\xi),R\}\widetilde{g}_j(\xi)\,d\xi \\
    & = \liminf_{j\to+\infty}  \int_{B_T} \min\{\rho(\xi),R\} \int_{Q_1} \int_{Q_1} f(x,x+y,V_j(x,y)+M\xi)\, dx\, dy \,d\xi.
    \end{align*}
    Using Lemma \ref{lemma : supercritical 1}, it is not restrictive to suppose that $V_j(x,y)\rightharpoonup V(x,y)$ weakly in $L^p(Q_1\times Q_1;\R^m)$ and, setting
\begin{align*}
    \mathcal{C}:=\Bigl\{U\in  L^p_{\text{loc}}(\R^d\times \R^d; \R^m)  :\, & U(\cdot, y) \text{ is } Q_1\text{-periodic for a.e. } y\in \R^d, \\
    & U(x,\cdot) \text{ is } Q_1\text{-periodic for a.e. } x\in \R^d,  \\
    & \text{and } \int_{Q_1}U(x,y)\,dx=0 \text{ for a.e. } y\in \R^d \Bigr\},
\end{align*}
by the definition of $V_j(x,y)$ and the periodicity of $v_j(x)$, it is easily seen that $V\in \mathcal{C}$. Then, we apply Theorem \ref{thm : LSC} with $\ell=d\times d, s=(x,y)$ and
\begin{equation*}
   E=Q_1\times Q_1,  \quad  \Psi((x,y),t,q)=f(x,x+y,q+M\xi),
\end{equation*} and use Fatou's Lemma to infer
\begin{align*}
    \f(M) & \geq \int_{B_T} \min\{\rho(\xi),R\} \liminf_{j\to+\infty}  \int_{Q_1} \int_{Q_1} f(x,x+y,V_j(x,y)+M\xi)\, dx\, dy \,d\xi \\
    & \geq \int_{B_T} \min\{\rho(\xi),R\} \int_{Q_1} \int_{Q_1} f(x,x+y,V(x,y)+M\xi)\, dx\, dy \,d\xi,
\end{align*}
which, by the arbitrariness of $R>0$ and $V\in \mathcal{C}$, implies
\begin{equation*}
    \f(M) \geq \inf\Bigl\{ \int_{B_T} \rho(\xi)\int_{Q_1}\int_{Q_1} f(x,x+y,V(x,y)+M\xi)\, dx\,dy\,d\xi : V\in\mathcal{C}\Bigr\}.
\end{equation*}

For the sake of notation, we set
\begin{equation*}
        F(V):=\int_{B_T} \rho(\xi)\int_{Q_1}\int_{Q_1} f(x,x+y,V(x,y)+M\xi)\, dx\,dy\,d\xi  
    \end{equation*}
    so that
    \begin{equation*}
    \f(M) \geq \inf\{ F(V) : V\in\mathcal{C}\}.
\end{equation*}
Let us suppose first that
\begin{equation*}
    \tag{H1}
        \rho(\xi)f(x,y,z)=\rho(-\xi)f(x,y,-z) 
    \end{equation*}
    for a.e. $\xi,x,y\in \R^d$ and for every $z\in\R^m$. Using a change of variables and \eqref{H1}, we have that
    \begin{align*}
        F(V) & =\int_{B_T} \rho(\xi)\int_{Q_1} f(x,x+y,V(x,y)+M\xi)\, dx\,dy\,d\xi \\
        & = \int_{B_T} \rho(-\xi)\int_{Q_1} f(x,x+y,V(x,y)-M\xi)\, dx\,dy\,d\xi \\
        & = \int_{B_T} \rho(\xi)\int_{Q_1} f(x,x+y, -V(x,y)+M\xi)\, dx\,dy\,d\xi \\
        & = F(-V);
    \end{align*}
    therefore, recalling \eqref{f : convex}, we infer
    \begin{equation*}
        F(0)\leq \frac{F(V)+F(-V)}{2} = F(V).
    \end{equation*}
    The arbitrariness of $V$ yields
    \begin{equation}\label{superfinal}
        \f(M)\geq F(0)=\int_{B_T} \rho(\xi)\int_{Q_1}\int_{Q_1} f(x,x+y,M\xi)\, dx\,dy\,d\xi ,
    \end{equation}
    which, using Fubini's Theorem and \eqref{f : period}, yields the desired lower bound.

\smallskip

Assume now that
\begin{equation*}
    \tag{H2}
    \int_{\R^d}\rho(\xi)\,d\xi =+\infty
\end{equation*}
and let $V\in \mathcal{C}$. The lower bound in \eqref{f : growth} implies that
\begin{equation*}
    \rho(\xi)f(x,x+y, V(x,y)+M\xi) \geq \alpha \rho(\xi)|V(x,y)+M\xi|^p
\end{equation*}
for a.e. $\xi,x,y\in \R^d$ and for every $z\in\R^m$,
and since there exist positive constants $c_1,c_2$ such that $|z_1+z_2|^p\geq c_1|z_1|^p-c_2|z_2|^p$ for every $z_1,z_2\in\R^m$, we infer that
\begin{equation*}
    \rho(\xi)f(x,x+y, V(x,y)+M\xi) + \alpha c_2 \rho(\xi)|M\xi|^p \geq \alpha c_1 \rho(\xi)|V(x,y)|^p.
\end{equation*}
Integrating this inequality, we get
\begin{equation*}
 F(V) +\alpha c_2\int_{B_T} \rho(\xi)|M\xi|^p\,d\xi  \geq \alpha c_1\Bigl(\int_{B_T} \rho(\xi)\,d\xi\Bigr)\Bigl(\int_{Q_1}\int_{Q_1} |V(x,y)|^p\, dx\,dy\Bigr);
\end{equation*}
therefore, by \eqref{rho : moment} and \eqref{H2}, we get that $F(V)$ is finite only if $V=0$, which implies \eqref{superfinal}.

\subsubsection{Upper bound}

In order to prove the optimality of the previous lower bound, we recall that, according to $(i)$ of Proposition \ref{prop : cell formula}, we have
\begin{align*}
    \f(M) & = \lim_{j\to+\infty} \inf\Bigl\{ \Bigl(\frac{\dej}{r}\Bigr)^d\int_{B_T} \rho(\xi)\int_{(Q_{r/\dej})_{\lambda_j}(\xi)} f\Bigl(x, x+\lambda_j\xi, \frac{u(x+\lambda_j\xi)-u(x)}{\lambda_j}\Bigr)\, dx\,d\xi : \\
    & \qquad \qquad \qquad u\in \mathcal{D}_{T\lambda_j,M}(Q_{r/\dej};\R^m) \Bigr\}
\end{align*}
for some $r>0$. We use $u(x)=Mx$ as a test function for these minimum problems to obtain
\begin{align}\nonumber
    \f(M) & \leq \liminf_{j\to+\infty} \Bigl(\frac{\dej}{r}\Bigr)^d\int_{B_T} \rho(\xi)\int_{(Q_{r/\dej})_{\lambda_j}(\xi)} f(x, x+\lambda_j\xi, M\xi)\, dx\,d\xi\\ \label{supsuper}
    & \leq \liminf_{j\to+\infty} \int_{B_T} \rho(\xi)\int_{Q_1} f(x,x+\lambda_j\xi,M\xi)\, dx\,d\xi ,
\end{align}
where the last inequality follows by \eqref{f : period}. 

On the one hand, if we apply Lemma \ref{lemma : supercritical 3} with $V_j=0$ for every $j\in \N$, we have that
\begin{equation*}
      g_j(\xi)=\int_{Q_1} f(x,x+\lambda_j\xi,M\xi)\, dx
\rightharpoonup
    \int_{Q_1} \int_{Q_1} f(x,y,M\xi)\, dx\, dy
\end{equation*}
weakly in $L^1(B_T)$. 

On the other hand, we observe that $\{\rho g_j\}_j$ is an equi-integrable sequence in $L^1(B_T)$ since, by \eqref{f : growth}, we have 
\begin{equation*}
    \int_A \rho(\xi)g_j(\xi)\,d\xi \leq \beta|M|^p \int_A\rho(\xi)|\xi|^p\,d\xi
\end{equation*}
for every $A\subset B_T$ measurable and $\rho$ has finite $p$-th moment according to \eqref{rho : moment}. Then, it is not restrictive to suppose that $\rho g_j\rightharpoonup \Theta$ weakly in $L^1(B_T)$ as $j\to+\infty$.

We claim that
\begin{equation}\label{supersupclaim}
   \rho(\xi) \int_{Q_1} \int_{Q_1} f(x,y,M\xi)\, dx\, dy=  \Theta(\xi)
   \end{equation}
for a.e. $\xi\in B_T$. Let $\chi_R:=\chi_{\{\xi\in B_T: \rho(\xi)<R\}}$ and $\psi\in C^\infty_c(B_T)$. Since $\rho\chi_R\psi\in L^\infty(B_T)$, by the weak convergence of $\{g_j\}_j$ we infer
\begin{equation*}
    \lim_{j\to+\infty} \int_{B_T}\rho(\xi)\chi_R(\xi)\psi(\xi)g_j(\xi)\,d\xi = \int_{B_T}\rho(\xi)\chi_R(\xi)\psi(\xi)\Bigl(\int_{Q_1} \int_{Q_1} f(x,y,M\xi)\, dx\, dy\Bigr)\,d\xi,
\end{equation*}
and analogously, since $\chi_R\psi\in L^\infty(B_T)$, we have
\begin{equation*}
    \lim_{j\to+\infty} \int_{B_T}\rho(\xi)\chi_R(\xi)\psi(\xi)g_j(\xi)\,d\xi = \int_{B_T}\Theta(\xi)\chi_R(\xi)\psi(\xi)\,d\xi.
\end{equation*}
We deduce that 
\begin{equation*}
   \rho(\xi)\int_{Q_1} \int_{Q_1} f(x,y,M\xi)\, dx\, dy= \Theta(\xi)
\end{equation*}
for a.e. $\xi\in B_T$ such that $\rho(\xi)<R$ and for every $R>0$; and since the set $\{\xi\in \R^d:\rho(\xi)=+\infty\}$ has measure zero in virtue of \eqref{rho : moment}, we infer \eqref{supersupclaim}.

Recalling \eqref{supsuper} we conclude
\begin{align*}
    \f(M) & \leq \liminf_{j\to+\infty}  \int_{B_T} \rho(\xi)\int_{Q_1} f(x,x+\lambda_j\xi,M\xi)\, dx\,d\xi \\
    & = \lim_{j\to+\infty}  \int_{B_T} \rho(\xi)g_j(\xi)d\xi \\
    & = \int_{B_T}\rho(\xi)\int_{Q_1} \int_{Q_1} f(x,y,M\xi)\, dx\, dy\,d\xi,
\end{align*}
which proves the upper bound.

\begin{remark}\label{rmk super} 
As a byproduct, the above arguments show that
\begin{multline*}
    \int_{B_T}\int_{Q_1} \int_{Q_1} \rho(\xi) f(x,y,M\xi)\, dx\, dy\,d\xi \\ = \lim_{j\to+\infty} \inf\Bigl\{ \int_{B_T} \rho(\xi)\int_{Q_1} f\Bigl(x,x+\lambda_j\xi,\frac{u(x+\lambda_j\xi)-u(x)}{\lambda_j}\Bigr)\, dx\,d\xi : u\in L^p_{\#,M}(Q_1;\R^m) \Bigr\},
\end{multline*}
and therefore, although in an indirect way, we obtain that $(iii)$ of Proposition \ref{prop : cell formula} is valid also for $\lambda=+\infty$. We point out that, in this case, we also had to also resort to the assumption \eqref{H1} or \eqref{H2}.
\end{remark}

\subsubsection{Further comments on the supercritical case}

We discuss our necessity to assume \eqref{H1} or \eqref{H2} in order to treat the supercritical case $\lambda=+\infty$. 
We just proved that if a certain $\f$ is a density for the $\Gamma$-limit, then
\begin{equation*}
    \f(M) \geq \inf\Bigl\{ \int_{B_T} \rho(\xi)\int_{Q_1}\int_{Q_1} f(x,x+y,V(x,y)+M\xi)\, dx\,dy\,d\xi : V\in\mathcal{C}\Bigr\}
\end{equation*}
for all $M\in \R^{m\times d}$. Inspecting the proof, we note that our argument is only based on the equi-boundedness of the difference quotients
\begin{equation*}
   V_j(x,\xi):= \frac{v_j(x+\xi)-v_j(x)}{\lambda_j}, \quad j\in\N,
\end{equation*}
which implies that (up to subsequences)
\begin{equation*}
    \frac{v_j(x+\xi)-v_j(x)}{\lambda_j} \rightharpoonup V(x,\xi)
\end{equation*}
weakly in $L^p(Q_1\times Q_1;\R^m)$ for some $V\in\mathcal{C}$,
but it does not exploit the fact that $\{u_j(x)=v_j(x)+Mx\}_j$ is a sequence of minimizers for the problems appearing in $(ii)$ of Proposition \ref{prop : cell formula}. This may lead us to believe that there is some room for an improvement: for instance, we may suspect that, incorporating the optimality of $\{u_j\}_j$, it may be possible to directly prove that 
\begin{equation}\label{zerofinale}
    \frac{v_j(x+\xi)-v_j(x)}{\lambda_j} \rightharpoonup 0,
\end{equation}
which would allow us to prove {\em (iii)} of Theorem \ref{thm : main} without any additional assumptions. According to the next Proposition, this may not be the case, even for a simple choice of the density $f$. 

Let $\{\e_j\}_j, \{\dej\}_j$ be vanishing sequences such that $\lambda_j:=\ej/\dej\to+\infty$ as $j\to+\infty$ and consider
\begin{equation*}
      F_j(u)=\int_\Omega\int_\Omega \frac{1}{\ej^d}\rho\Bigl(\frac{y-x}{\ej}\Bigr)a\Bigl(\frac{x}{\dej}\Bigr)\Bigl|\frac{u(y)-u(x)}{\ej}\Bigr|^2\, dx\,dy,    
\end{equation*}
with $\rho$ a kernel in $L^1(\R^d)\cap L^\infty(\R^d)$ supported on $B_T$ for some $T>0$ and fulfilling \eqref{rho : ball} and \eqref{rho : moment}, and $a$ a non-constant $Q_1$-periodic function such that $0<\alpha\leq a(x)\leq\beta<+\infty$ for a.e. $x\in \R^d$. With this choice, we have that $f(x,y,z)=a(x)|z|^2$, which implies that $f(x,y,z)=f(x,y,-z)$ for a.e. $x,y\in \R^d$ and for every $z\in\R$, where we work in the scalar case $m=1$ for the sake of simplicity. Therefore, we have that \eqref{H1} is equivalent to
\begin{equation}\label{Ht1} \tag{$\rm \widetilde{H}1$}
    \rho(\xi)=\rho(-\xi)
\end{equation}
for a.e. $\xi\in B_T$ and that \eqref{H2} fails.

\begin{proposition}\label{prop : superobstruction}
Let $M\in \R^{d}, M\neq 0$, and, for all $j\in\N$, let $u_j$ be a minimizer for    
\begin{equation}\label{superminima}
    \inf\Bigl\{ \int_{B_T} \rho(\xi)\int_{Q_1} a(x)\Bigl|\frac{u(x+\lambda_j\xi)-u(x)}{\lambda_j}\Bigr|^2\, dx\,d\xi : u\in L^2_{\#,M}(Q_1) \Bigr\},
\end{equation}
let $v_j(x):=u_j(x)-Mx,$ and let 
\begin{equation*}
    V_j(x,\xi):=\frac{v_j(x+\xi)-v_j(x)}{\lambda_j}.
\end{equation*}
If $V_j\rightharpoonup 0$ weakly in $L^2(Q_1\times Q_1)$ as $j\to+\infty$, then 
\begin{equation*}
    \int_{B_T}\rho(\xi)M\xi\,d\xi=0.
\end{equation*}
\end{proposition}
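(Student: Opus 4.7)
The plan is to derive and carefully exploit the Euler--Lagrange equation for $u_j$. By convexity of the quadratic functional and the affine structure of $L^2_{\#,M}(Q_1)$, minimality is equivalent to
\begin{equation*}
\int_{B_T}\rho(\xi)\int_{Q_1}a(x)W_j(x,\xi)\,D_j\varphi(x,\xi)\,dx\,d\xi=0\qquad\text{for every }\varphi\in L^2_{\#}(Q_1),
\end{equation*}
where $W_j(x,\xi):=(u_j(x+\lambda_j\xi)-u_j(x))/\lambda_j=V_j(x,\lambda_j\xi)+M\xi$ and $D_j\varphi(x,\xi):=(\varphi(x+\lambda_j\xi)-\varphi(x))/\lambda_j$. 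The key move is to test with $\varphi=\lambda_j a$, so that $D_j\varphi(x,\xi)=a(x+\lambda_j\xi)-a(x)$ stays uniformly bounded as $j\to+\infty$. Splitting $W_j$, the EL becomes $I_j+II_j=0$ with
\begin{align*}
I_j&=\int_{B_T}\rho(\xi)\int_{Q_1}a(x)V_j(x,\lambda_j\xi)\bigl[a(x+\lambda_j\xi)-a(x)\bigr]dx\,d\xi,\\
II_j&=\int_{B_T}\rho(\xi)(M\xi)\int_{Q_1}a(x)\bigl[a(x+\lambda_j\xi)-a(x)\bigr]dx\,d\xi.
\end{align*}

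For $II_j$, I would introduce the bounded $Q_1$-periodic function $H(y):=\int_{Q_1}a(x)a(x+y)\,dx$; direct computation gives $H(0)=\int_{Q_1}a^2$ and mean $\bar H=\bar a^2$. Since $H(\lambda_j\cdot)\rightharpoonup\bar a^2$ weakly-$\ast$ in $L^\infty$ and $\rho(\xi)M\xi\in L^1(B_T)$ (by \eqref{rho : moment} and Cauchy--Schwarz), one gets
\begin{equation*}
\lim_{j\to+\infty}II_j=\bigl(\bar a^2-{\textstyle\int a^2}\bigr)\int_{B_T}\rho(\xi)M\xi\,d\xi=-\mathrm{Var}(a)\int_{B_T}\rho(\xi)M\xi\,d\xi,
\end{equation*}
with $\mathrm{Var}(a)>0$ since $a$ is non-constant.

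For $I_j$, I would write $I_j=\int_{B_T}\rho(\xi)h_j(\lambda_j\xi)\,d\xi$, with the $Q_1$-periodic function
\begin{equation*}
h_j(\eta):=\int_{Q_1}a(x)V_j(x,\eta)\bigl[a(x+\eta)-a(x)\bigr]dx.
\end{equation*}
The hypothesis $V_j\rightharpoonup 0$ in $L^2(Q_1\times Q_1)$, tested against $\psi(\eta)a(x)[a(x+\eta)-a(x)]\in L^2(Q_1\times Q_1)$, yields $h_j\rightharpoonup 0$ weakly in $L^2(Q_1)$ with $\|h_j\|_{L^2(Q_1)}$ uniformly bounded. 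Changing variable $\eta=\lambda_j\xi$, tiling $\lambda_jB_T$ by integer translates of $Q_1$, and using the $Q_1$-periodicity of $h_j$, one obtains
\begin{equation*}
I_j=\int_{Q_1}\Phi_j(z)h_j(z)\,dz+O(\lambda_j^{-1}),\qquad \Phi_j(z):=\lambda_j^{-d}\sum_{k\in\mathbb{Z}^d:\,k+Q_1\subset\lambda_jB_T}\rho\bigl((k+z)/\lambda_j\bigr),
\end{equation*}
the remainder bounding the cells straddling $\partial(\lambda_jB_T)$. The Riemann-sum formula for $\rho\in L^1(\R^d)$ gives $\Phi_j(z)\to\int_{\R^d}\rho$ for a.e.\ $z\in Q_1$; the uniform bound $|\Phi_j|\le C\|\rho\|_{L^\infty}$ then upgrades this to strong convergence in $L^2(Q_1)$. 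The weak--strong product forces $I_j\to 0$.

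Passing to the limit in $I_j+II_j=0$ gives $\mathrm{Var}(a)\int_{B_T}\rho(\xi)M\xi\,d\xi=0$, which yields the thesis. The delicate step is the treatment of $I_j$: both factors of its integrand oscillate at the fast scale $\lambda_j\xi$, so the weak convergence $V_j\rightharpoonup 0$ cannot be pulled through the fast composition $V_j(x,\lambda_j\xi)$ directly. The Riemann-sum decomposition is the key device, as it trades the fast oscillation in the slow variable $\xi$ for a strongly convergent averaging weight $\Phi_j$ paired against the slow-variable weak limit $h_j\rightharpoonup 0$.
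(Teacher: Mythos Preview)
Your proof is correct but follows a genuinely different route from the paper's.

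The paper uses \emph{two} test functions in the Euler--Lagrange equation. First, the self-test $w=v_j$ yields
\begin{equation*}
\int_{B_T}\rho(\xi)\int_{Q_1}a(x)\,|V_j(x,\lambda_j\xi)|^2\,dx\,d\xi = -\int_{B_T}\rho(\xi)M\xi\int_{Q_1}a(x)\,V_j(x,\lambda_j\xi)\,dx\,d\xi.
\end{equation*}
A short tiling argument (easier than yours, since only the \emph{fixed} function $\rho(\xi)a(x)M\xi\in L^2$ is paired against $V_j(x,\lambda_j\xi)$) upgrades the hypothesis $V_j\rightharpoonup 0$ in $L^2(Q_1\times Q_1)$ to $V_j(x,\lambda_j\xi)\rightharpoonup 0$ in $L^2(Q_1\times B_T)$, so the right side vanishes. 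The left side is nonnegative, hence $V_j(x,\lambda_j\xi)\to 0$ \emph{strongly}. A second test $w=\lambda_j\widetilde w$ with any mean-zero periodic $\widetilde w$ then gives, by strong--weak pairing and Riemann--Lebesgue,
\begin{equation*}
\Bigl(\int_{B_T}\rho(\xi)M\xi\,d\xi\Bigr)\Bigl(\int_{Q_1}a\,\widetilde w\,dx\Bigr)=0,
\end{equation*}
and one chooses $\widetilde w$ with $\int_{Q_1}a\,\widetilde w\neq 0$, which is possible since $a$ is non-constant.

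Your route avoids the self-test entirely: a single test $\varphi=\lambda_j a$ suffices, and you handle the fast-scale product $\rho(\xi)h_j(\lambda_j\xi)$ by the Riemann-sum unfolding, obtaining the variance factor $\mathrm{Var}(a)$ directly. The paper's self-test buys a stronger intermediate conclusion (strong convergence of $V_j(x,\lambda_j\xi)$) that trivialises the subsequent limit; your approach trades that for a more hands-on averaging argument but needs no upgrade of the mode of convergence. Both arguments rely on $\rho\in L^1\cap L^\infty$ in comparable ways.
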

\begin{proof}
Since $\rho\in L^1(\R^d)$, it is possible to compute the Euler-Lagrange equations for the minimum problems \eqref{superminima} obtaining that for every $j\in\N$ we have
\begin{equation*}
    \int_{B_T} \rho(\xi)\int_{Q_1} a(x)\Bigl(\frac{u_j(x+\lambda_j\xi)-u_j(x)}{\lambda_j}\Bigr)\Bigl(\frac{w(x+\lambda_j\xi)-w(x)}{\lambda_j}\Bigr)\, dx\,d\xi=0 
\end{equation*}
for every $w\in L^2_{\#,0}(Q_1)$.

By first testing this equation with $w(x)=v_j(x)$, we infer
\begin{multline}
    \int_{B_T} \rho(\xi)\int_{Q_1} a(x)\Bigl|\frac{v_j(x+\lambda_j\xi)-v_j(x)}{\lambda_j}\Bigr|^2\, dx\,d\xi \\ \label{ele1}
    = - \int_{B_T} \rho(\xi)M\xi\int_{Q_1} a(x)\Bigl(\frac{v_j(x+\lambda_j\xi)-v_j(x)}{\lambda_j}\Bigr)\, dx\,d\xi;
\end{multline}
then, testing with $w(x)=\lambda_j\widetilde{w}(x)$ for some $\widetilde{w}\in L^2_{\#,0}(Q_1)$, we obtain
\begin{multline}\label{ele2}
     \int_{B_T} \rho(\xi)\int_{Q_1} a(x)\Bigl(\frac{v_j(x+\lambda_j\xi)-v_j(x)}{\lambda_j}\Bigr)(\widetilde{w}(x+\lambda_j\xi)-\widetilde{w}(x))\, dx\,d\xi \\
     = - \int_{B_T} \rho(\xi)M\xi\int_{Q_1} a(x)(\widetilde{w}(x+\lambda_j\xi)-\widetilde{w}(x))\, dx\,d\xi.
\end{multline}

Now we note that $V_j(x,\lambda_j\xi)\rightharpoonup 0$ weakly in $L^2(Q_1\times B_T)$. Indeed, by \eqref{superbound}, this sequence is equi-bounded in $L^2(Q_1\times B_T)$, hence, it suffices to test the weak convergence with the characteristic functions of measurable sets $E\times Q\subset Q_1\times B_T$, with $Q$ a cube. Also using the equi-integrability of $\{V_j\}_j$ in $L^1(Q_1\times Q_1)$, it is immediate to observe that
\begin{align*}
    \lim_{j\to+\infty} \int_Q\int_E V_j(x,\lambda_j\xi)\,dx\,d\xi & =  \lim_{j\to+\infty} \lambda_j^{-d}\int_{\lambda_jQ}\int_E V_j(x,\xi)\,dx\,d\xi \\
    & = |Q|\lim_{j\to+\infty} \int_{Q_1}\int_E V_j(x,\xi)\,dx\,d\xi,
\end{align*}
which equals $0$ since $V_j\rightharpoonup 0$ by assumption. Passing to the limit in \eqref{ele1}, we get 
\begin{equation*}
   \lim_{j\to+\infty} \int_{B_T} \rho(\xi)\int_{Q_1} a(x)\Bigl|\frac{v_j(x+\lambda_j\xi)-v_j(x)}{\lambda_j}\Bigr|^2\, dx\,d\xi = 0,
\end{equation*}
which, by \eqref{rho : ball} and the fact that $\inf a\geq \alpha$, implies
\begin{equation*}
   \lim_{j\to+\infty} \int_{B_{r_0}} \int_{Q_1} \Bigl|\frac{v_j(x+\lambda_j\xi)-v_j(x)}{\lambda_j}\Bigr|^2\, dx\,d\xi = 0,
\end{equation*}
and then, by Lemma \ref{lemma : diadic},
\begin{equation*}
   \lim_{j\to+\infty} \int_{B_T} \int_{Q_1} \Bigl|\frac{v_j(x+\lambda_j\xi)-v_j(x)}{\lambda_j}\Bigr|^2\, dx\,d\xi = 0.
\end{equation*}

Upon assuming that $\int_{Q_1}\widetilde{w}=0$, by the Riemann-Lebesgue Lemma we have 
\begin{equation*}
    \widetilde{w}(x+\lambda_j\xi)-\widetilde{w}(x)\rightharpoonup -\widetilde{w}(x)
\end{equation*}
weakly in $L^2(Q_1\times B_T)$ as $j\to+\infty$; hence, we pass to the limit in \eqref{ele2} using the strong-weak convergence to obtain
\begin{equation*}
    0 =\int_{B_T} \rho(\xi)M\xi\,d\xi\int_{Q_1} a(x)\widetilde{w}(x)\, dx. 
\end{equation*}
Finally, properly choosing $\widetilde{w}$, we get
\begin{equation*}
    \int_{B_T} \rho(\xi)M\xi\,d\xi=0,
\end{equation*}
which is the thesis.
\end{proof}

In this setting, if we choose $\rho$ in such a way that also \eqref{Ht1} fails, we find a certain $\widetilde{M}\neq 0$ for which
\begin{equation*}
    \int_{B_T} \rho(\xi)\widetilde{M}\xi\,d\xi\neq0;
\end{equation*}
and then, applying the above Proposition, we deduce that 
\eqref{zerofinale} fails for the sequence $\{v_j\}_j$ corresponding to such $\widetilde{M}$, as claimed.

\section{Proof of the main result}

Finally, we prove Theorem \ref{thm : main} removing the truncation assumption on the functionals.

\begin{proof}[Proof of Theorem \ref{thm : main}]

Let $\{T_h\}_h$ be any sequence monotonically increasing to $+\infty$ and assume that $T_h>r_0$ and $Q_1\subset B_{T_h}$ for all $h\in\N$, where $r_0$ is as in \eqref{rho : ball}. By Theorem \ref{thm : intrep} and a diagonal argument, there exists a subsequence $\{j_k\}_k$ such that, for every $h\in\N$, the sequence of truncated functionals $\{F_{j_k}^{T_h}(\cdot,\cdot)\}_k$ $\Gamma$-converges to a functional that admits the integral representation 
\begin{equation}\label{representation}
   F^{T_h}(u,A) := \Gamma(L^p)\text{-}\lim_{k\to+\infty} F_{j_k}^{T_h}(u,A) = \begin{cases}
            \displaystyle \int_{A} \f_h(\nabla u)\,dx & \text{ if } u\in W^{1,p}(A;\R^m), \\
            +\infty & \text{ if } u\in L^p(\Omega;\R^m)\setminus W^{1,p}(A;\R^m)
        \end{cases}
\end{equation}
for all $A\in \A_{\rm reg}(\Omega)$ and for some quasiconvex $\f_h: \R^{m\times d}\to [0,+\infty)$. By Lemma \ref{lemma : truncated functionals}, we have that \begin{equation*}
   \Gamma(L^p)\text{-}\lim_{k\to+\infty} F_{j_k}(u,A) =\lim_{h\to+\infty} F^{T_h}(u,A) 
\end{equation*}
for every $u\in L^p(\Omega; \R^m)$ and $A\in \A_{\rm reg}(\Omega)$. Clearly, $\{\f_h\}_h$ is an increasing sequence of functions; therefore, by \eqref{representation} and the Monotone Convergence Theorem, we have
\begin{equation*}
   \Gamma(L^p)\text{-}\lim_{k\to+\infty} F_{j_k}(u,A) =  \begin{cases}
            \displaystyle \int_{A} \lim_{h\to+\infty}\f_h(\nabla u)\,dx & \text{ if } u\in W^{1,p}(A;\R^m), \\
            +\infty & \text{ if } u\in L^p(\Omega;\R^m)\setminus W^{1,p}(A;\R^m)
            \end{cases}
\end{equation*}
for all $A\in \A_{\rm reg}(\Omega)$. In order to conclude, we prove that
\begin{equation*}
    \lim_{h\to+\infty}\f_h(M)= f_\lambda(M)
\end{equation*}
for all $M\in \R^{m \times d}$, where $f_\lambda$ is defined as in the statement of Theorem \ref{thm : main} in accordance with the value of $\lambda\in [0,+\infty]$. Indeed, since $f_\lambda$ is independent of the subsequence $\{j_k\}_k$, the conclusion follows by the Urysohn property of the $\Gamma$-convergence (see \cite{DM}).

\medskip

{\em Subcritical case, $\lambda=0$.} Let $M\in \R^{m\times d}$ be fixed. By Proposition \ref{prop : main}, we have that
\begin{align*}
    \lim _{h\to+\infty}\f_h(M) & = \lim_{h\to+\infty}\inf\Bigl\{ \int_{B_{T_h}} \rho(\xi)\int_{Q_1} f(x,x,(\nabla u ) \xi)\, dx\,d\xi : u\in W^{1,p}_{\#,M}(Q_1; \R^m) \Bigr\} \\
    & \leq \inf\Bigl\{ \int_{\R^d} \rho(\xi)\int_{Q_1} f(x,x,(\nabla u ) \xi)\, dx\,d\xi : u\in W^{1,p}_{\#,M}(Q_1; \R^m) \Bigr\} \\
    & = f_0(M).
\end{align*}
Conversely, let $\{u_h\}_h\subset W^{1,p}_{\#,M}(Q_1; \R^m)$ be such that
\begin{equation}\label{minimizer}
   \lim _{h\to+\infty} \f_h(M)  = \lim_{h\to+\infty}  \int_{B_{T_h}} \rho(\xi)\int_{Q_1} f(x,x,(\nabla u_h ) \xi)\, dx\,d\xi,
\end{equation}
and, without loss of generality, assume
\begin{equation}\label{null value}
\int_{Q_1}u_h\,dx=0   
\end{equation}
for every $h\in\N$. Combining \eqref{minimizer} with \eqref{rho : ball} and \eqref{f : growth}, we have
\begin{equation*}
     \sup_h \int_{Q_1} \int_{B_{r_0}}|(\nabla u_h ) \xi|^p\,d\xi\, dx<+\infty;
\end{equation*}
and since there exists a positive constant $C$ such that 
\begin{equation*}
    \int_{B_{r_0}}|L\xi|^p\,d\xi \geq C|L|^p
\end{equation*}
for every $L\in \R^{m\times d}$, we infer
\begin{equation*}
     \sup_h \int_{Q_1} |\nabla u_h|^p\, dx< +\infty.
\end{equation*}
This, together with \eqref{null value}, implies that there exists a subsequence $\{u_{h_i}\}_i$ converging to a certain $u\in W^{1,p}_{\#,M}(Q_1; \R^m)$ weakly in $W^{1,p}(Q_1;\R^m)$. As a consequence, we get that, for any $R>0$ fixed, 
\begin{equation*}
    (\nabla u_{h_i}(x))\xi \rightharpoonup (\nabla u(x))\xi 
\end{equation*}
weakly in $L^p(Q_1\times B_R;\R^m)$ as $i\to+\infty$.

Since $f$ satisfies \eqref{f : convex} and \eqref{f : continuous}, we apply Theorem \ref{thm : LSC} with $\ell=d\times d, s=(\xi,x)$, and
\begin{equation*}
    E=Q_1\times B_R, \quad \Psi((\xi,x),y,z)=\rho(\xi)f(x,y,z), 
\end{equation*}
to obtain 
\begin{align*}
     \lim _{h\to+\infty}\f_h(M) & =  \lim _{i\to+\infty} \int_{B_{T_{h_i}}} \rho(\xi)\int_{Q_1} f(x,x,(\nabla u_{h_i} ) \xi)\, dx\,d\xi \\
     & \geq \int_{B_R} \rho(\xi)\int_{Q_1} f(x,x,(\nabla u ) \xi)\, dx\,d\xi, \end{align*}
     and, by the arbitrariness of $R>0$,
\begin{align*}
  \lim _{h\to+\infty}\f_h(M) & \geq \int_{\R^d} \rho(\xi)\int_{Q_1} f(x,x,(\nabla u ) \xi)\, dx\,d\xi \\
  & \geq \inf\Bigl\{ \int_{\R^d} \rho(\xi)\int_{Q_1} f(x,x,(\nabla u ) \xi)\, dx\,d\xi : u\in W^{1,p}_{\#,M}(Q_1; \R^m) \Bigr\} \\
    & = f_0(M),
\end{align*}
concluding the proof in the subcritical case.

\medskip

{\em Critical case, $\lambda\in(0,+\infty)$.} Fix $M\in \R^{m\times d}$. By Proposition \ref{prop : main}, we have that
    \begin{align*}
      \lim_{h\to+\infty} \f_h(M) & =  \lim_{h\to+\infty} \inf\Bigl\{ \int_{B_{T_h}} \rho(\xi)\int_{Q_1} f\Bigl(x,x+\lambda\xi,\frac{u(x+\lambda\xi)-u(x)}{\lambda}\Bigr)\, dx\, d\xi : u\in L^p_{\#,M}(Q_1;\R^m) \Bigr\} \\
        & \leq \inf\Bigl\{ \int_{\R^d} \rho(\xi)\int_{Q_1} f\Bigl(x,x+\lambda\xi,\frac{u(x+\lambda\xi)-u(x)}{\lambda}\Bigr)\, dx\, d\xi : u\in L^p_{\#,M}(Q_1;\R^m) \Bigr\}.
    \end{align*}
To prove the converse, let $\{u_h\}_h\subset L^p_{\#,M}(Q_1;\R^m)$ be such that
\begin{equation*}
    \lim_{h\to+\infty} \f_h(M) =\lim_{h\to+\infty} \int_{B_{T_h}} \rho(\xi)\int_{Q_1} f\Bigl(x,x+\lambda\xi,\frac{u_h(x+\lambda\xi)-u_h(x)}{\lambda}\Bigr)\, dx\, d\xi 
\end{equation*}
and suppose that \eqref{null value} holds for every $h\in \N$. Reasoning as in the subcritical case, we have
\begin{equation*}
     \sup_h \int_{B_{r_0}} \int_{Q_1} \Bigl|\frac{u_h(x+\lambda\xi)-u_h(x)}{\lambda}\Bigr|^p\, dx\,d\xi  <+\infty,
\end{equation*}
and then, applying $(ii)$ of Lemma \ref{lemma : compactness} with $A=Q_1$ and $r=r_0$, we obtain 
that there exists a subsequence $\{u_{h_i}\}_i$ and a function $u\in L^p_{\#,M}(Q_1;\R^m)$ such that
\begin{equation*}
     \frac{u_{h_i}(x+\lambda\xi)-u_{h_i}(x)}{\lambda}\rightharpoonup \frac{u(x+\lambda\xi)-u(x)}{\lambda} 
\end{equation*}
weakly in $L^p(Q_1\times B_R;\R^m)$ as $i\to+\infty$, for any $R>0$ fixed. The conclusion now follows as for the subcritical case. Indeed, we apply Theorem \ref{thm : LSC} in the same way to obtain 
\begin{align*}
       \lim_{h\to+\infty} \f_h(M) & = \lim_{i\to+\infty} \int_{B_{T_{h_i}}} \rho(\xi)\int_{Q_1} f\Bigl(x,x+\lambda\xi,\frac{u_{h_i}(x+\lambda\xi)-u_{h_i}(x)}{\lambda}\Bigr)\, dx\, d\xi \\
      & \geq \int_{B_R} \rho(\xi)\int_{Q_1} f\Bigl(x,x+\lambda\xi,\frac{u(x+\lambda\xi)-u(x)}{\lambda}\Bigr)\, dx\, d\xi, 
      \end{align*}
and, by the arbitrariness of $R$ and $u$, we conclude that
\begin{align*}
     \lim_{h\to+\infty} \f_h(M) & \geq\inf\Bigl\{ \int_{\R^d} \rho(\xi)\int_{Q_1} f\Bigl(x,x+\lambda\xi,\frac{u(x+\lambda\xi)-u(x)}{\lambda}\Bigr)\, dx\, d\xi : u\in L^p_{\#,M}(Q_1;\R^m) \Bigr\}.
\end{align*}
Finally, using the change of variables $y:=x+\lambda\xi$, we get
\begin{equation*}
  \inf\Bigl\{ \int_{\R^d} \rho(\xi)\int_{Q_1} f\Bigl(x,x+\lambda\xi,\frac{u(x+\lambda\xi)-u(x)}{\lambda}\Bigr)\, dx\, d\xi : u\in L^p_{\#,M}(Q_1;\R^m) \Bigr\}=f_\lambda(M),
\end{equation*}
that is the thesis.

\medskip

{\em Supercritical case, $\lambda=+\infty$.} In this case the proof is immediate; indeed, by Proposition \ref{prop : main} and the Monotone Convergence Theorem,
\begin{align*}
    \lim_{h\to+\infty}\f_h(M) & = \lim_{h\to+\infty}\int_{B_{T_h}} \int_{Q_1}\int_{Q_1} \rho(\xi)f(x,y,M\xi)\, dx\,dy\,d\xi \\
    & =\int_{\R^d} \int_{Q_1}\int_{Q_1} \rho(\xi) f(x,y,M\xi)\, dx\,dy\,d\xi \\
    & = f_{+\infty}(M)
\end{align*}
for every $M\in \R^{m\times d}$, concluding the proof.
\end{proof}

As a corollary, we extend $(iii)$ of Proposition \ref{prop : cell formula} to kernels that are not necessarily supported on a ball, and also to the case $\lambda=+\infty$. Consequently, we obtain that the densities $\{f_\lambda\}_\lambda$ vary continuously with respect to $\lambda$. 

\begin{corollary}
    Let $M\in \R^{m\times d}$ and let $\{\lambda_j\}_j$ be a positive sequence converging to $\lambda\in[0,+\infty]$. Then 
    \begin{equation*}
    f_{\lambda}(M) = \lim_{j\to+\infty} \inf\Bigl\{ \int_{\R^d} \rho(\xi)\int_{Q_1} f\Bigl(x,x+\lambda_j\xi,\frac{u(x+\lambda_j\xi)-u(x)}{\lambda_j}\Bigr)\, dx\,d\xi : u\in L^p_{\#,M}(Q_1;\R^m) \Bigr\}.
\end{equation*}
    Moreover, the function $\lambda\mapsto f_\lambda(M)$ is continuous in $[0,+\infty]$.
\end{corollary}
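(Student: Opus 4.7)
The continuity of $\lambda\mapsto f_\lambda(M)$ on $[0,+\infty]$ is a direct consequence of the first equality applied to arbitrary sequences $\lambda_j\to\lambda$, so I would concentrate on that equality. Since each $\lambda_j\in(0,+\infty)$, Theorem \ref{thm : main}(ii) combined with the change of variables $y=x+\lambda_j\xi$ shows that the quantity
\begin{equation*}
I_j:=\inf\Bigl\{\int_{\R^d}\rho(\xi)\int_{Q_1} f\Bigl(x,x+\lambda_j\xi,\tfrac{u(x+\lambda_j\xi)-u(x)}{\lambda_j}\Bigr)\, dx\,d\xi : u\in L^p_{\#,M}(Q_1;\R^m)\Bigr\}
\end{equation*}
coincides with $f_{\lambda_j}(M)$; hence the claim reduces to proving $\lim_j f_{\lambda_j}(M)=f_\lambda(M)$.

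For the lower bound $f_\lambda(M)\leq\liminf_j I_j$, I would pick positive $\{\ej\}_j,\{\dej\}_j$ with $\ej/\dej=\lambda_j$ and $\ej,\dej\to 0$ (attainable, e.g., via $\dej=\min\{1/j,1/(j\lambda_j)\}$ and $\ej=\lambda_j\dej$). Theorem \ref{thm : main} then yields $\Gamma(L^p)$-convergence of the corresponding sequence $\{F_j\}_j$ to the functional with density $f_\lambda$. Starting from an arbitrary subsequence realizing $\liminf_j I_j$, Theorem \ref{thm : intrep} and a diagonal argument over a sequence $T_h\to+\infty$ produce a sub-subsequence $\{j_l\}_l$ along which $\{F_{j_l}^{T_h}\}_l$ $\Gamma$-converges to an integral functional with density $\f_{T_h}$ for every $h\in\N$ and $\f_{T_h}\nearrow f_\lambda$, as in the proof of Theorem \ref{thm : main}. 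Combining Proposition \ref{prop : cell formula}(iii) (for $\lambda\in[0,+\infty)$) and Remark \ref{rmk super} (for $\lambda=+\infty$) with the trivial bound $A_{T_h,j}\leq I_j$, where $A_{T,j}$ is the analogue of $I_j$ with $B_T$ in place of $\R^d$, one gets $\f_{T_h}(M)=\lim_l A_{T_h,j_l}\leq\lim_l I_{j_l}=\liminf_j I_j$. Letting $h\to+\infty$ gives the desired inequality.

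For the upper bound $\limsup_j I_j\leq f_\lambda(M)$ I would distinguish three regimes. If $\lambda\in[0,+\infty)$, given $\eta>0$ I would choose a smooth near-optimal test function $w$ for the formula defining $f_\lambda(M)$ (in $W^{1,p}_{\#,M}\cap C^\infty(\R^d;\R^m)$ when $\lambda=0$, in $L^p_{\#,M}\cap C^\infty(\R^d;\R^m)$ when $\lambda\in(0,+\infty)$) and plug it into $I_j$. The resulting integrand converges pointwise thanks to \eqref{f : continuous} and the smoothness of $w$, and it is dominated by $\beta\|\nabla w\|_{L^\infty}^p\,\rho(\xi)|\xi|^p$, which is integrable by \eqref{rho : moment}; the Dominated Convergence Theorem then yields $\limsup_j I_j\leq f_\lambda(M)+\eta$, exactly as in the upper bounds of Proposition \ref{prop : main}. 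If $\lambda=+\infty$, I would test with $u(x)=Mx$, so that $I_j\leq\int_{\R^d}\rho(\xi)g_j(\xi)\,d\xi$ with $g_j(\xi):=\int_{Q_1}f(x,x+\lambda_j\xi,M\xi)\,dx$. I would split this integral into $B_T$ and $\R^d\setminus B_T$, control the tail uniformly in $j$ by $\beta|M|^p\int_{\R^d\setminus B_T}\rho(\xi)|\xi|^p\,d\xi$ via \eqref{f : growth} and \eqref{rho : moment}, and on $B_T$ apply Lemma \ref{lemma : supercritical 3} with $V_j\equiv 0$ combined with the $\min\{\rho,R\}$-truncation used in the supercritical upper bound of Proposition \ref{prop : main}, concluding that $\int_{B_T}\rho\,g_j\,d\xi\to\int_{B_T}\rho(\xi)\int_{Q_1}\int_{Q_1}f(x,y,M\xi)\,dx\,dy\,d\xi$; sending $T\to+\infty$ then yields $\limsup_j I_j\leq f_{+\infty}(M)$.

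The principal technical difficulty is expected to be the supercritical upper bound: the non-integrability of $\rho$ and the failure of pointwise convergence of $g_j$ force a double truncation, in $\xi$-space via $B_T$ and in the values of $\rho$ via $\{\rho<R\}$, together with a careful ordering of the successive limits in $j$, $R$, and $T$; the other regimes are direct Dominated Convergence arguments on smooth test functions.
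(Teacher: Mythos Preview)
Your proposal is correct, and the upper-bound arguments (smooth test functions with Dominated Convergence for $\lambda\in[0,+\infty)$, and testing with $Mx$ plus tail control for $\lambda=+\infty$) coincide with the paper's. One small imprecision: the truncation the paper uses in the supercritical \emph{upper} bound is $\chi_{\{\rho<R\}}$ rather than $\min\{\rho,R\}$; either works, and the paper in fact packages the whole step by observing that the already-established weak $L^1(B_T)$-convergence of $\rho g_j$ together with the uniform majorant $\beta|M|^p\rho(\xi)|\xi|^p\in L^1(\R^d)$ upgrades to weak $L^1(\R^d)$-convergence, which is slightly cleaner than your three-limit argument in $j$, $R$, $T$.

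Your lower bound, however, takes a genuinely different route from the paper. The paper argues directly: it selects near-minimizers $u_j$ with zero mean, invokes the compactness results of Lemma~\ref{lemma : compactness} (or, for $\lambda=+\infty$, Remark~\ref{rmk super} on the truncated problem) and then applies the lower-semicontinuity Theorem~\ref{thm : LSC}, essentially replaying the Section~3 arguments at the level of cell problems. You instead bootstrap from the already-proved Theorem~\ref{thm : main}: you manufacture sequences $\ej,\dej$ with ratio $\lambda_j$, pass to a sub-subsequence on which every truncated $\Gamma$-limit exists with density $\f_{T_h}$, and then feed the identity $\f_{T_h}(M)=\lim_l A_{T_h,j_l}$ (from Proposition~\ref{prop : cell formula}(iii) and Remark~\ref{rmk super}) together with $A_{T_h,j}\le I_j$ and $\f_{T_h}\nearrow f_\lambda$. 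This is a legitimate and rather economical shortcut: it avoids touching minimizers or compactness at all, at the price of invoking the machinery (subsequence extraction, the structure of the proof of Theorem~\ref{thm : main}) rather than the elementary estimates. The paper's direct approach is more self-contained and makes the role of \eqref{f : continuous} and the compactness dichotomy transparent; your approach highlights that the corollary is really a formal consequence of the main theorem once the truncated cell-formula identities are in hand.
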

\begin{proof}

According to $(ii)$ of Theorem \ref{thm : main} and a change of variables, we need to prove that
\begin{equation*}
    f_\lambda(M)=\lim_{j\to+\infty}f_{\lambda_j}(M).
\end{equation*}

    Let us first suppose $\lambda \in [0,+\infty)$ and let $\{u_j\}_j\subset L^p_{\#,M}(Q_1;\R^m)$ be such that
    \begin{equation*}
       \liminf_{j\to+\infty}f_{\lambda_j}(M)
        = \liminf_{j\to+\infty}  \int_{\R^d} \rho(\xi)\int_{Q_1} f\Bigl(x,x+\lambda_j\xi,\frac{u_j(x+\lambda_j\xi)-u_j(x)}{\lambda_j}\Bigr)\, dx\,d\xi. 
    \end{equation*}
Upon assuming that the mean value of $u_j$ on $Q_1$ is zero for all $j\in\N$, we obtain compactness properties for the sequence $\{u_j\}_j$ reasoning exactly as in Section $3$ for the proofs of the lower bounds in the subcritical and critical cases, and then, also resorting to Theorem \ref{thm : LSC}, we get that
    \begin{align*}
        \liminf_{j\to+\infty}f_{\lambda_j}(M) &\geq \inf\Bigl\{ \int_{\R^d} \rho(\xi)\int_{Q_1} f(x,x,(\nabla u ) \xi)\, dx\,d\xi : u\in W^{1,p}_{\#,M}(Q_1; \R^m) \Bigr\} \\
        & = f_{0}(M)
    \end{align*}
    if $\lambda=0$, and that
    \begin{align*}
  \liminf_{j\to+\infty}f_{\lambda_j}(M) & \geq \inf\Bigl\{ \int_{\R^d} \rho(\xi)\int_{Q_1} f\Bigl(x,x+\lambda\xi,\frac{u(x+\lambda\xi)-u(x)}{\lambda}\Bigr)\, dx\, d\xi : u\in L^p_{\#,M}(Q_1;\R^m) \Bigr\} \\
  & = f_\lambda(M)
\end{align*}
if $\lambda\in(0,+\infty)$. Conversely, to prove that
\begin{equation*}
    \limsup_{j\to+\infty}f_{\lambda_j}(M) \leq f_\lambda(M) ,
\end{equation*}
we observe that
\begin{equation}\label{final limit}
    \limsup_{j\to+\infty}f_{\lambda_j}(M) \leq \limsup_{j\to+\infty}\int_{\R^d} \rho(\xi)\int_{Q_1} f\Bigl(x,x+\lambda_j\xi,\frac{w(x+\lambda_j\xi)-w(x)}{\lambda_j}\Bigr)\, dx\, d\xi
\end{equation}
for any $w\in L^p_{\#,M}(Q_1; \R^m)$, and that
\begin{equation*}
    \int_{\R^d} \rho(\xi)\int_{Q_1} f\Bigl(x,x+\lambda_j\xi,\frac{w(x+\lambda_j\xi)-w(x)}{\lambda_j}\Bigr)\, dx\, d\xi
\end{equation*}
tends to 
\begin{equation*}
    \int_{\R^d} \rho(\xi)\int_{Q_1} f(x,x,(\nabla w ) \xi)\, dx\,d\xi
\end{equation*}
if $\lambda=0$ and $w\in W^{1,p}_{\#,M}(Q_1; \R^m)\cap C^\infty(\R^d;\R^m)$, and tends to
\begin{equation*}
    \int_{\R^d} \rho(\xi)\int_{Q_1} f\Bigl(x,x+\lambda\xi,\frac{w(x+\lambda\xi)-w(x)}{\lambda}\Bigr)\, dx\, d\xi
\end{equation*}
if $\lambda\in(0,+\infty)$ and $w\in L^p_{\#,M}(Q_1; \R^m)\cap C^\infty(\R^d;\R^m)$. Then, letting $w$ be a (almost) minimizer for the minimum problem corresponding to $f_0(M)$ or to $f_\lambda(M)$, we infer the desired inequality by \eqref{final limit}.

To conclude, we consider the case $\lambda=+\infty$. On the one hand, letting $T>0$ sufficiently large so that Remark \ref{rmk super} is valid, we have  
\begin{align*}
    \liminf_{j\to+\infty} f_{\lambda_j}(M) & \geq \lim_{j\to+\infty} \inf\Bigl\{ \int_{B_T} \rho(\xi)\int_{Q_1} f\Bigl(x,x+\lambda_j\xi,\frac{u(x+\lambda_j\xi)-u(x)}{\lambda_j}\Bigr)\, dx\,d\xi : u\in L^p_{\#,M}(Q_1;\R^m) \Bigr\} \\
    & =  \int_{B_T} \int_{Q_1}\int_{Q_1} \rho(\xi)f(x,y,M\xi)\, dx\,dy\,d\xi,
\end{align*}
and then, letting $T\to+\infty$,
\begin{equation*}
    \liminf_{j\to+\infty} f_{\lambda_j}(M) \geq f_{+\infty}(M).
\end{equation*}
Conversely, we recall that, in order to prove the upper bound for the supercritical case in section $3$, we proved that
\begin{equation*}
\rho(\xi)\int_{Q_1} f(x,x+\lambda_j\xi,M\xi)\, dx
\rightharpoonup
    \rho(\xi)\int_{Q_1} \int_{Q_1} f(x,y,M\xi)\, dx\, dy
\end{equation*}
weakly in $L^1(B_T)$ for any $T>0$ large enough. By \eqref{f : growth} and \eqref{rho : moment}, we have that for a.e. $\xi\in\R^d$ and for every $j\in\N$ it holds
\begin{equation*}
    \rho(\xi)\int_{Q_1} f(x,x+\lambda_j\xi,M\xi)\, dx \leq \beta|M|^p\rho(\xi)|\xi|^p \in L^1(\R^d);
\end{equation*}
and therefore, we deduce that
\begin{equation*}
\rho(\xi)\int_{Q_1} f(x,x+\lambda_j\xi,M\xi)\, dx
\rightharpoonup
    \rho(\xi)\int_{Q_1} \int_{Q_1} f(x,y,M\xi)\, dx\, dy
\end{equation*}
weakly in $L^1(\R^d)$. We obtain 
\begin{align*}
f_{+\infty}(M) & = \int_{\R^d} \int_{Q_1}\int_{Q_1} \rho(\xi)f(x,y,M\xi)\, dx\,dy\,d\xi \\
& = \lim_{j\to+\infty} \int_{\R^d} \rho(\xi)\int_{Q_1} f(x,x+\lambda_j\xi,M\xi)\, dx\,d\xi \\
& \geq \limsup_{j\to+\infty} \inf\Bigl\{ \int_{\R^d} \rho(\xi)\int_{Q_1} f\Bigl(x,x+\lambda_j\xi,\frac{u(x+\lambda_j\xi)-u(x)}{\lambda_j}\Bigr)\, dx\,d\xi : u\in L^p_{\#,M}(Q_1;\R^m) \Bigr\} \\
& = \limsup_{j\to+\infty} f_{\lambda_j}(M),
\end{align*}
where we have used $u(x)=Mx$ as a test function for the minimum problems. This  concludes the proof.
\end{proof}

\begin{remark} \label{rmk}
    A milder growth condition from above on $f$ can be required upon enhancing the integrability of $\rho$ at the origin. In particular, we may require in addition that
    \begin{equation}\label{rmk 0}
        \int_{\R^d} \rho(\xi)\, d\xi<
+\infty,   \end{equation}
in order to replace \eqref{f : growth} with
\begin{equation}\label{newGC}
    \alpha|z|^p\leq f(x,y,z)\leq \beta(1+|z|^p) \text{ for almost every } x,y\in \R^d \text{ and for every } z\in \R^m.
\end{equation}
Indeed, there are only a few points in our proofs where a growth condition from above on $f$ is employed. We mention some of them and briefly illustrate how to modify the proofs according to the new set of assumptions.

\smallskip

In the first part of the proof of Proposition \ref{prop : cell formula} we observed that
\begin{equation}\label{rmk 1}
   \Bigl(\frac{\dej}{r}\Bigr)^d\int_{B_T} \rho(\xi)\int_{(Q_{\lceil r/\dej \rceil })_{\lambda_j}(\xi) \setminus (Q_{r/\dej})_{\lambda_j}(\xi)} f(x, x+\lambda_j\xi, M\xi)\, dx\,d\xi
\end{equation}
tends to $0$. Recalling \eqref{set} and using \eqref{newGC}, it is easily seen that \eqref{rmk 1} is estimated from above by
\begin{equation*}
    \Bigl(\frac{\dej}{r}\Bigr)^d|E_j|\beta\int_{B_T} \rho(\xi)\bigl( 1+|M|^p|\xi|^p\bigr)\,d\xi;
\end{equation*}
and since we already proved that
\begin{equation*}
   \Bigl(\frac{\dej}{r}\Bigr)^d|E_{j}| \beta|M|^p\int_{B_T} \rho(\xi)|\xi|^p\,d\xi
\end{equation*}
tends to $0$ as $j\to+\infty$, it suffices to observe that
\begin{equation*}
   \Bigl(\frac{\dej}{r}\Bigr)^d |E_{j}|\beta\int_{B_T} \rho(\xi)\,d\xi
\end{equation*}
vanishes by \eqref{rmk 0} and the fact that $(\dej/r)^d|E_{j}|\to0$.

In the upper bounds for the case $\lambda\in[0,+\infty)$ in Section $3$, \eqref{f : growth} is employed to obtain some uniform bounds from above useful to apply the Dominated Convergence Theorem. As an example, we observe that inequality \eqref{forDCT} is now replaced by
\begin{equation*}
     \rho(\xi) f(x,x,(\nabla u(x))\xi) \leq \beta \rho(\xi)(1+|\xi|^p|\nabla u(x)|^p)\quad \text{ for a.e. } (x,\xi)\in Q_1\times B_T,
\end{equation*}
and the desired estimates are now obtained also resorting to \eqref{rmk 0}. 

In the supercritical case $\lambda=+\infty$, similar simple adaptations are needed in the proofs of Lemma \ref{lemma : supercritical 2} and Lemma \ref{lemma : supercritical 3}. As for the proof of the upper bound, it suffices to observe that
\begin{equation*}
    \rho(\xi)\int_{Q_1} f(x,x+\lambda_j\xi,M\xi)\, dx, \quad j\in\N,
\end{equation*}
is still an equi-integrable sequence since, by \eqref{newGC}, we have that, for every $j\in\N$ and $A\subset B_T$ measurable, it holds
\begin{equation*}
    \int_{A} \rho(\xi)\int_{Q_1} f(x,x+\lambda_j\xi,M\xi)\, dx\,d\xi \leq \beta \Bigl\{\int_A \rho(\xi)\,d\xi+ \int_A \rho(\xi)|M\xi|^p\,d\xi\Bigr\}
\end{equation*}
so that the conclusion follows by \eqref{rmk 0}. Clearly, in this case, the main result holds true assuming \eqref{H1}.
\end{remark}

\medskip

{\textbf{Acknowledgements.} The author wishes to thank Andrea Braides, Irene Fonseca, and Giovanni Leoni for the valuable comments, and acknowledges the hospitality of the Center for Nonlinear Analysis and the Department of Mathematical Sciences at Carnegie Mellon University, Pittsburgh, where the work has been completed. The author is member of Gruppo Nazionale per l'Analisi Matematica, la Probabilità e le loro Applicazioni (GNAMPA) of Istituto Nazionale di Alta Matematica (INdAM).

\bibliographystyle{plain} 
\bibliography{refs} 

\end{document}